\newtheorem*{rep@theorem}{\rep@title}
\newcommand{\newreptheorem}[2]{%
\newenvironment{rep#1}[1]{%
 \def\rep@title{#2 \ref{##1}}%
 \begin{rep@theorem}}%
 {\end{rep@theorem}}}
\newtheorem{theorem}{Theorem}[section]
\newtheorem{lemma}[theorem]{Lemma}
\newtheorem{proposition}[theorem]{Proposition}
\newtheorem{corollary}[theorem]{Corollary}
\newtheorem{question}[theorem]{Question}
\theoremstyle{definition}
\newtheorem{remark}[theorem]{Remark}
\newtheorem{definition}[theorem]{Definition}
\newtheorem{notation}[theorem]{Notation}
\newtheorem{definitions}[theorem]{Definitions}
\newtheorem{example}[theorem]{Example}
\newtheorem{remarks}[theorem]{Remarks}
\def\beq{\begin{eqnarray*}}
\def\eeq{\end{eqnarray*}}
\def\Q{\mathbb{Q}}
\def\R{\mathbb{R}}
\def\Z{\mathbb{Z}}
\def\incl{\hookrightarrow}
\def\to{\rightarrow}
\def\tF{\widetilde{F}}
\def\tX{\widetilde{X}}
\def\tM{\widetilde{M}}
\def\eps{\varepsilon}
\def\dim{\mathrm{dim}\>}
\def\x{\times}
\def\d{\partial}
\def\phi{\varphi}
\def\Emb{\mathrm{Emb}}
\def\Embbar{\overline{\mathrm{Emb}}}
\def\K{\mathcal{K}}
\def\L{\mathcal{L}}
\def\Map{\mathrm{Map}}
\def\SS{\mathfrak{S}}
\def\LD{\mathcal{LD}}
\def\KD{\mathcal{KD}}
\newcommand{\codim}{\operatorname{codim}}
    \title{Bott--Taubes--Vassiliev cohomology classes by cut-and-paste topology}
    \author{Robin Koytcheff}
    \email{koytcheff@louisiana.edu}
    \thanks{Parts of this work were supported by NSF grant DMS-1004610 and by a PIMS Postdoctoral Fellowship.}
    \address{Department of Mathematics, University of Louisiana, Lafayette, LA, USA 70504}
    \keywords{configuration spaces, Bott--Taubes integrals, Vassiliev classes, singular cohomology of spaces of knots and links, graph cohomology, gluing, finite-type invariants, degrees of maps}
\subjclass[2010]{58D10, 55R80, 81Q30, 57M27, 55R12}
\begin{document}

\begin{abstract}
Bott and Taubes used integrals over configuration spaces to produce finite-type a.k.a. Vassiliev knot invariants.
Cattaneo, Cotta-Ramusino and Longoni then used these methods together with graph cohomology to construct ``Vassiliev classes'' in the real cohomology of spaces of knots in higher-dimensional Euclidean spaces, as first promised by Kontsevich.   Here we construct integer-valued cohomology classes in spaces of knots and links in $\R^d$ for $d>3$.  We construct such a class for any integer-valued graph cocycle, by the method of gluing compactified configuration spaces.  Our classes form the integer lattice among the previously discovered real cohomology classes.  Thus we obtain nontrivial classes from trivalent graph cocycles.  Our methods generalize to yield mod-$p$ classes out of mod-$p$ graph cocycles, which need not be reductions of classes over the integers.
\end{abstract}

\maketitle

\vspace{2pc}

\section{Introduction}
This work concerns spaces of embeddings of 1-manifolds into Euclidean space.  
The quintessential example of such an embedding space is the space of knots in $\R^3$.  Its path components are isotopy classes of knots, and locally constant functions on this space are precisely knot invariants.  Our main result is a construction of integer-valued cohomology classes in these embedding spaces, which are thus generalizations of invariants of knots and links.  

Our key methods are further developments of the configuration space integrals pioneered by Bott and Taubes \cite{BottTaubes}.  These integrals provided a topological interpretation of knot invariants coming from perturbative Chern--Simons field theory in quantum physics and also generalized the Gauss linking integral from links to knots.  
Parametrizing these integrals by the space of knots, one can view them as integrals along the fiber of a bundle over that space.
Configuration space integrals produce all finite-type knot invariants \cite{DThurstonABThesis, VolicBT}.  
They similarly yield real cohomology classes in spaces of knots, as shown by Cattaneo, Cotta-Ramusino, and Longoni \cite{CCRL-AGT}, as well as in spaces of links, as shown in joint work with Munson and Voli\'c \cite{KMV}.
We call these cohomology classes as \emph{Bott--Taubes--Vassiliev classes} or \emph{configuration space integral classes}.
Similar methods yield invariants of homology 3-spheres \cite{KuperbergThurston, LescopOnKTT} and characteristic classes of homology sphere bundles \cite{Watanabe1, Watanabe2}.  
All of these ideas were outlined in the visionary work of Kontsevich \cite{KontsevichFeynman, KontsevichVassiliev}.  A key ingredient for indexing the cohomology classes in all of these constructions is a cochain complex of graphs, which include but are not limited to trivalent graphs.

In our relatively recent work, we considered the setting of cohomology of spaces of knots and links and carried out ``homotopy-theoretic integration" by replacing integration of differential forms by a Pontrjagin--Thom construction \cite{Rbo, HoBTTower}.  This produced cohomology classes with \emph{arbitrary} coefficients rather than just real coefficients.  Subsequently, we refined this construction by gluing configuration spaces and thus recovered the Milnor triple linking number for long links \cite{HoMilnor3ple, HoBTTower}.  
Our main result here is to recover integer multiples of the Bott--Taubes--Vassiliev classes over $\Z$.

\begin{reptheorem}{MainTheorem}
Let $d >3$, and let $\L^d_m$ denote the space of long $m$-component links in $\R^d$.  For each integer-valued graph cocycle $\gamma$, there is a fiber bundle $F_\gamma \to X_\gamma \to \L^d_m$ over the space of $m$-component long links in $\R^d$  and a map
\[
H^{*}(X_\gamma, \d X_\gamma;\, \Z) 
\longrightarrow H^{* - \dim F_\gamma}(\L^d_m;\, \Z)
\]
such that the images over all cocycles $\gamma$ span an integer lattice among the configuration space integral cohomology classes.
\end{reptheorem}

By ``integer lattice'' above, we mean a free $\Z$-module in the $\R$-vector space of Bott--Taubes--Vassiliev classes.  
Many of the configuration space integral cohomology classes are already known to be nontrivial, and thus the same is true of our $\Z$-valued classes above.  
It is conjectured that the configuration space integral classes span all of the real cohomology of the space of knots.  In fact, the graph complexes are quasi-isomorphic to the first term of the Vassiliev--Sinha spectral sequence \cite[p.~5]{CCRL-AGT}, which is known to compute the rational homology of the space of knots \cite{LTV}.

Furthermore, our construction can in principle produce torsion classes with $\Z$ coefficients and generalize to mod-$p$ cohomology classes which need not be reductions of the integer-valued classes.  
See Section \ref{NontrivialTorsion}.

The main idea of our method is simple.  
Calculating an invariant of knots, links, or 3-manifolds as a sum of configuration space integrals is morally equivalent to calculating the degree of a map, a purely topological feature.  
This equivalence can be made precise by gluing different configuration spaces together.
This approach was pursued in the work of Kuperberg and Thurston on 3-manifold invariants \cite{KuperbergThurston}; the work of Polyak and Viro on the type-2 knot invariant \cite{PolyakViro-Casson}; the work of Poirier on all finite-type invariants of knots and links \cite{PoirierAGT}; and our previous work on the triple linking number for long links \cite{HoMilnor3ple}.  The basic idea also appears in the work of Bott and Taubes \cite{BottTaubes}.

To make this idea precise, we topologically reformulate fiberwise integration via the Serre spectral sequence in singular cohomology.  This is in contrast to the Pontrjagin--Thom constructions we used in our previous works on this subject \cite{Rbo}.  
There we did not explicitly recover the Bott--Taubes--Vassiliev classes (however, see Remark \ref{CoarseQuotient}).
So our present construction is at the level of cohomology rather than spaces or spectra.  The benefit is that we avoid the technical obstacle of endowing the glued configuration space with a smooth structure.  The drawback is that we lose potential classes in generalized cohomology theories with respect to which the configuration space bundles involved are orientable.  
On the other hand, singular cohomology with coefficients in $\Z$ or $\Z/p$ are the main cohomology theories which come to mind in this setting, in addition to de Rham cohomology.

Our methods apply just as well to spaces of closed links as to spaces of long links.
We choose spaces of long links for definiteness, and also because these spaces have a monoid structure that could be useful for further investigation.

Because of Stokes's Theorem, a key step for constructing link invariants or cohomology classes is ensuring that the integral along the boundaries of the configuration spaces is ultimately zero.
The above-mentioned gluing of configuration spaces corresponds to the way in which they combine to yield zero.  There are three ways in which this can happen: 
(1) two non-vanishing integrals along a pair of faces cancel; 
(2) the integral along a face vanishes by symmetry;
(3) the integral along a face vanishes by degeneracy of the form to be integrated.
Topologically treating case (2) is most easily done by using a ``smaller'' compactification of configuration space than the Axelrod--Singer a.k.a.~Fulton--Macpherson compactification.  Specifically, we use a compactification which was considered by Kuperberg and Thurston.  A compactification considered by Poirier appears to be similar to this one.  

Our gluing construction is also similar to that of Kuperberg and Thurston, but different in that we construct a glued space for each graph cocycle, rather than one universal glued space for all trivalent graph cocycles.  
In Section \ref{Results}, we give a more precise restatement of our Main Theorem, which includes a factor of $N!\, 2^{N-1}$ (respectively $N!\, 2^{2N - 2}$) for $d$ odd (respectively even) where $N$ is the maximum number of edges in the graph cocycle.  
This factor resembles the denominator in Poirier's rationality result for invariants of links in $\R^3$ \cite[Proposition 1.11]{PoirierAGT}.
However, because we do not use one universal glued space, we are not forced to work over $\Q$.
Thus the framework where each individual class lives in cohomology with $\Z$ coefficients allows a possible extension to $\Z/p$.  The factors $r_{\mathrm{odd}}$ and $r_{\mathrm{even}}$ do not preclude the possibility of mod-$p$ classes that do not come from classes over $\Z$, but it suggests the importance of ensuring that the construction is sharp when seeking nontrivial such classes.
In addition, we consider cocycles which are not necessarily trivalent, unlike in other authors' work on the case $d=3$, but as in the work \cite{CCRL-AGT} the case $d \geq 4$.

The fact that our methods do not completely work in the case $d=3$ is related to the integrals over the anomalous faces of configuration space.  Whether the integrals vanish along these faces is to our knowledge an open problem, but was studied in detail in work of Poirier \cite{PoirierAGT} and unpublished work of Yang \cite{SWYangArXiv}.  
In the setting of homology 3-sphere invariants, one uses a framing of the manifold to collapse this face to lower dimension.  Thus, the problem for classical knot invariants can be viewed as the failure of a framing of the knot to respect the spherical maps (defined in Section \ref{SphericalMaps}).
In any case, this problem is what stymies our topological construction for $d=3$.

We suspect that our construction is adaptable to the setting of characteristic classes of bundles of homology spheres \cite{Watanabe1, Watanabe2, KuperbergThurston, LescopOnKTT}, roughly generalizing from $S^d$ to homology spheres.  We leave this generalization for potential future work.

\subsection{Organization of the paper}
In Section \ref{Basics}, we provide background and general preliminaries on spaces of knots and links, graphs, orientations of graphs, graph cohomology, gluing manifolds with corners, fiberwise integration, and the pushforward via the Serre spectral sequence.

In Section \ref{Section3}, we define a compactification of configuration spaces via iterated blowups. 
This compactification, which is ``smaller'' than that of Fulton--Macpherson and Axelrod--Singer, previously appeared in the preprint of Kuperberg and Thurston on 3-manifold invariants \cite{KuperbergThurston}.

In Section \ref{Section4}, we define bundles over spaces of links whose fibers are the compactified configuration spaces from Section \ref{Section3}.  We review the construction of real cohomology classes in spaces of knots and links by fiberwise integration of forms, as in the work of Bott and Taubes \cite{BottTaubes} and Cattaneo et al.~ \cite{CCRL-AGT}.  
This work is reviewed for the sake of relating our construction to it, and thus establishing its nontriviality.

In Section \ref{GluingSection}, we describe the gluings of the compactified configuration space bundles defined in Section \ref{Section3}.  This construction involves gluing, folding, and collapsing various codimension-1 faces, while leaving some faces to the relative locus.  Using general results from Section \ref{Basics} on spaces thus constructed, we establish that the glued space has fundamental class, as well as a map to an appropriate quotient of a product of spheres.

In Section \ref{Results}, we prove our Main Theorem, Theorem \ref{MainTheorem}.  
Using the glued bundle from Section \ref{GluingSection}, the Serre spectral sequence, and a lemma from Section \ref{Basics}, we construct cohomology classes which form an integer lattice among the classes obtained by integration of forms.  We discuss some examples, like the type-2 invariant for knots and the triple linking number for long links.  

The last two Sections contain only minor results and mainly concern plans for future work.  
In Section \ref{NontrivialTorsion}, we consider the possibility of torsion classes that are reductions of integer-valued classes.  These come from Turchin's calculations in graph complexes.  We sketch a method for potentially detecting their nontriviality due to Pelatt and Sinha.  This involves constructing dual homology classes out of bracket expressions and associated resolutions of singular knots.

In Section \ref{ModPSection}, we consider coefficients in $\Z/p$.  Over $\Z/2$, we describe a slight generalization of our method that includes mod-2 reductions of our classes over $\Z$, as well as classes that are potentially not such reductions.

\subsection{Acknowledgments}
We thank the referees for many useful comments and suggestions.

\tableofcontents

\section{Basic definitions, conventions, background, and general preliminaries}
\label{Basics}

Here we provide basic definitions and background material on spaces of links (Section \ref{SpacesOfLinks}), graphs and their orientations (Section \ref{Graphs}), the graph cochain complex (Section \ref{GraphComplex}), gluing manifolds with corners (Section \ref{GluingMfds}) and fiberwise integration via the Serre spectral sequence (Section \ref{GeneralIntegration}).

\subsection{Spaces of knots and links}
\label{SpacesOfLinks}
Let $\L^d_m$ denote the space of long $m$-component links in $\R^d$.  By the space of long links, we mean the space of embeddings of $\coprod_m \R$ into $\R^d$ with fixed linear behavior outside $\coprod_m [-1,1]$, equipped with the Whitney $C^\infty$ topology.  For technical reasons, the fixed linear behavior is required to have distinct directions for distinct strands.  One can view $\L^d_m$ as an infinite-dimensional manifold, for example by defining smooth maps $M \to \L^d_m$ from finite-dimensional manifolds $M$ in a fairly obvious way.  This allows one to consider the de Rham cohomology of $\L^d_m$.
See \cite[Section 2]{KMV} for details on these points.  
We will also write $\K^d$ for the space of long knots $\L^d_1$.
This paper concerns cohomology classes in spaces of knots and links.  
Our work below applies with minimal modification to spaces of closed links $\Emb(\coprod_m S^1, \R^d)$.  We choose long links for the sake of definiteness.

\subsection{Graphs}
\label{Graphs}
Certain graphs are crucial for our constructions.  
Fix an oriented 1-manifold $L$ without boundary and with finitely many components.  We mainly consider the case where $L$ is a disjoint union of finitely many copies of the real line.\footnote{Ultimately, in Section \ref{SuspendingGraphs}, we will modify a graph on $L$ as defined below to get a graph on a space $L'$ that is a 1-manifold except at one singular point $\infty$.}
We allow the case $L=\varnothing$, even though the empty set may not qualify as a 1-manifold.
The main idea of the relevant graphs, as well as the differential defined in Section \ref{GraphComplex}, goes back to Kontsevich \cite[p.~11]{KontsevichFeynman}.  The same type of graphs have been used by other authors in various contexts, e.g.~\cite{BarNatanTopology, CCRL-AGT, Watanabe2, KMV}.

\begin{definitions}
\label{GraphsDefinition}
An \emph{unoriented (link) graph} or \emph{unoriented (link) diagram $\Gamma$ on $L$} consists of 
\begin{itemize}
\item
$V(\Gamma)$, a (finite) set of \emph{vertices}, which is partitioned as $V(\Gamma) = V_{seg}(\Gamma) \sqcup V_{free}(\Gamma)$ into \emph{segment vertices} which lie on $L$ and the remaining \emph{free vertices}; and
\item
$E(\Gamma)$, a (finite) set of \emph{edges}, which abstractly are unordered pairs of vertices; we call the pair of vertices the \emph{endpoints} of an edge, and we call a pair consisting of an edge and one of its endpoints an \emph{edge-end}.
\item
$A(\Gamma)$, a (finite) set of \emph{arcs}, where each arc is part of $L$ between two segment vertices.  An arc is not considered an edge, but like an edge, it has \emph{endpoints} and \emph{ends}.
\end{itemize}
We require the valence of each vertex to be at least 3, where ends of edges and arcs alike count towards valence; 
an edge may not join a free vertex to itself; and if $L \neq \varnothing$, we require that each component of $\Gamma$ be connected to $L$.  
A self-loop on a vertex is allowed as an edge, and like other edges it has two edge-ends.
Multiple edges (or edge(s) and an arc) may join the same pair of vertices.\footnote{In Section \ref{GraphComplex} we will quotient by graphs with self-loops on free vertices or multiple edges.}  
\begin{itemize}
\item A \emph{chord diagram} is a graph with no free vertices.
\item A \emph{subgraph} $\Gamma'$ of a graph $\Gamma$ will always be determined by a subset $V' \subset V(\Gamma)$, where the edges and arcs in $\Gamma'$ are those in $\Gamma$ between vertices in $V'$.
A subgraph is not required to satisfy the conditions on valence and being connected to $L$.
\item An \emph{isomorphism} of graphs is a homeomorphism of the associated topological spaces which preserves the components of $L$ and their orientations. \qed
\end{itemize}
\end{definitions}

We consider two ways in which graphs on $L$ give rise to ordinary finite graphs, i.e.~1-dimensional CW-complexes.
Let $U(\Gamma)$ denote the graph obtained by forgetting $L$.  Then $U(\Gamma)$ is ``at least unitrivalent'' in the sense that each free vertex has valence $\geq 3$ and each segment vertex has valence $\geq 1$.
Let $T(\Gamma)$ denote the graph obtained by regarding arcs as edges and forgetting the complement in $L$ of all segment vertices and arcs between them.  
(The pieces we forget here are (a) components in $L$ with no segment vertices and (b) half-open rays in $L$ from a segment vertex to $\pm \infty$.)
Thus if $L$ is compact, then $T(\Gamma)$ is an ``at least trivalent'' graph. 
In Section \ref{SuspendingGraphs}, the suspension of graphs from an additional $\infty$ will essentially reduce our considerations to the setting where $L$ is compact.
Finally, if $L=\varnothing$, then $U(\Gamma)=T(\Gamma)=\Gamma$.

For now, we will use ``graph'' to mean ``unoriented graph.''  Below, we will orient our graphs by adding certain decorations, and then we will only omit the adjectives ``unoriented'' and ``oriented'' when the meaning is clear from the context.  

\begin{definitions}
\label{BiconnectedBlocks}
Call a graph $\Gamma$ \emph{connected} if $T(\Gamma)$ is connected, i.e., for all vertices $u,v$ there is a path of edges and arcs joining $u$ to $v$; otherwise call $\Gamma$ \emph{disconnected}.
A vertex of a graph $\Gamma$ is called a \emph{cut vertex} if removing it and all incident edges and arcs in $T(\Gamma)$ produces a disconnected nonempty graph.
A graph $\Gamma$ is \emph{biconnected} if it is connected and has no cut vertices.  
Call a maximal biconnected subgraph of $\Gamma$ a \emph{block} of $\Gamma$.  
\end{definitions}

Thus a single edge or arc is biconnected, but a single vertex is not.  
Some authors use either ``vertex-2-connected'' or ``2-connected'' instead of ``biconnected.''  
We prefer ``block'' to the lengthier term ``biconnected component.''  Kuperberg and Thurston use the term ``lobe'' instead of ``block.''  To our knowledge, they are the only authors who have previously considered the biconnectivity of these types of graphs.

\begin{lemma}
\label{IntersectionOfBlocks}
The intersection of any two distinct blocks $\Gamma_1, \Gamma_2$ of $\Gamma$ is at most one vertex, which is necessarily a cut vertex of $\Gamma$.
\end{lemma}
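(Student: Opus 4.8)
The plan is to derive the whole statement from a single principle: if two biconnected subgraphs of $\Gamma$ share at least two vertices, then their union is biconnected. I would prove this principle first. If $H_1, H_2 \subseteq \Gamma$ are biconnected with $|V(H_1)\cap V(H_2)| \geq 2$, then $H_1 \cup H_2$ is connected (the pieces are connected and meet) and has at least two vertices, hence is not the forbidden single vertex; and for any vertex $w$ one has $(H_1\cup H_2)-w = (H_1-w)\cup(H_2-w)$, where each $H_i - w$ is connected and nonempty — if $w \notin V(H_i)$ this is trivial, and if $w \in V(H_i)$ it is immediate from the definition of biconnectedness since $|V(H_i)|\geq 2$ — while $V(H_1)\cap V(H_2)$ always retains a vertex $\neq w$ lying in both $H_i - w$, so the two pieces are glued along a vertex and $(H_1\cup H_2)-w$ is connected. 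Thus $H_1 \cup H_2$ has no cut vertex.

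Granting this, the first assertion is immediate: if distinct blocks $\Gamma_1,\Gamma_2$ shared two vertices, then $\Gamma_1\cup\Gamma_2$ would be biconnected; since the two are distinct maximal biconnected subgraphs, neither contains the other, so $\Gamma_1\cup\Gamma_2$ strictly contains $\Gamma_1$, contradicting maximality. Hence $|V(\Gamma_1)\cap V(\Gamma_2)| \leq 1$; and since an edge of $\Gamma_1\cap\Gamma_2$ would put both its endpoints into $V(\Gamma_1)\cap V(\Gamma_2)$, the intersection can contain no edge (leaving aside a self-loop at the common vertex, which does not affect the statement), so it is at most the single common vertex $v$.

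For the second assertion, suppose $V(\Gamma_1)\cap V(\Gamma_2)=\{v\}$ but, for contradiction, $\Gamma-v$ is connected. Pick $u_i \in V(\Gamma_i)\setminus\{v\}$ and a path $Q$ from $u_1$ to $u_2$ in $\Gamma - v$. Let $a_1$ be the last vertex of $Q$ (read from $u_1$) in $V(\Gamma_1)$ and $a_2$ the first vertex of $Q$ after $a_1$ in $V(\Gamma_2)$; both exist, both differ from $v$, and since $v$ is the only common vertex of the two blocks we get $a_1 \notin V(\Gamma_2)$ and $a_2\notin V(\Gamma_1)$. Thus the sub-path $R$ of $Q$ from $a_1$ to $a_2$ avoids $v$ and has no internal vertex in $V(\Gamma_1)\cup V(\Gamma_2)$. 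I would then show $H := \Gamma_1\cup\Gamma_2\cup R$ is biconnected: it is connected and has at least two vertices, and for each vertex $w\in V(H)$ one checks that $H - w$ is connected by a short case analysis according to whether $w$ equals $v$, equals an endpoint $a_i$ of $R$, is an interior vertex of $R$, or is an ordinary vertex of one of the two blocks — using repeatedly that $\Gamma_i - v$ and $\Gamma_i - a_i$ are connected, that $v\notin R$, and that (by the first assertion) no $w\neq v$ can lie in both $\Gamma_1$ and $\Gamma_2$. Since $a_2 \in V(H)\setminus V(\Gamma_1)$, the subgraph $H$ strictly contains $\Gamma_1$, contradicting maximality; therefore $\Gamma - v$ is disconnected and $v$ is a cut vertex of $\Gamma$.

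The conceptual weight is all in the union principle; the genuinely effortful part is the final case analysis verifying that $H - w$ remains connected for every vertex $w$, and the only real annoyance is confirming that self-loops at a cut vertex — which in principle can arise under the valence conventions — do not actually violate the stated pairwise-intersection property.
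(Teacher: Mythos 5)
Your proof is correct and follows essentially the same route as the paper's: the first assertion via the principle that two biconnected subgraphs sharing at least two vertices have biconnected union (contradicting maximality), and the second by adjoining to $\Gamma_1 \cup \Gamma_2$ a path in $\Gamma - v$ between the two blocks. You merely supply the details the paper compresses into ``it is easy to see,'' including the helpful refinement of $Q$ to a segment $R$ meeting $V(\Gamma_1)\cup V(\Gamma_2)$ only at its endpoints, which makes the final biconnectedness check routine.
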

\begin{proof}
If $\Gamma_1 \neq \Gamma_2$ intersect in more than one vertex, then their union is biconnected, contradicting maximality.  If $\Gamma_1, \Gamma_2$ intersect in a vertex $u$ that is not a cut vertex, then the removal of $u$ leaves $\Gamma$ connected.  Thus we can find a path from a vertex $v$ in $\Gamma_1\setminus \Gamma_2$ to a vertex $w$ in $\Gamma_2 \setminus \Gamma_1$ which does not pass through $u$.  Adjoining this path to $\Gamma_1 \cup \Gamma_2$ creates a biconnected graph, again contradicting maximality.
\end{proof}

\begin{definition}
Define $\Upsilon(\Gamma)$, the \emph{block-cut forest of $\Gamma$}, to be the graph with a vertex for every block $b$ of $\Gamma$ and every cut vertex $c$ of $\Gamma$, and an edge between $b$ and $c$ when the cut vertex $c$ is contained in $b$:
\[
V(\Upsilon(\Gamma)) = \{\text{blocks $b$ of } \Gamma\} \cup \{\text{cut-vertices $c$ of } \Gamma\}  \qquad
E(\Upsilon(\Gamma)) \subset \{ \text{(block $b$, cut-vertex $c$)}  \}
\]
\end{definition}

\begin{lemma}
\label{BlockCutTreeBasics} \ 
\begin{itemize}
\item[(a)] 
The graph $\Upsilon(\Gamma)$ is a forest where the leaves are labeled only by blocks, not cut-vertices.  If $\Gamma$ is connected, then $\Upsilon(\Gamma)$ is a tree.
\item[(b)]  
If $\Gamma_1,\dots,\Gamma_j$ are the blocks of $\Gamma$, and $CV(\Gamma)$ is the set of cut-vertices of $\Gamma$, then 
\[
\sum_{i=1}^j |V(\Gamma_i)| = |V(\Gamma)| + |E(\Upsilon(\Gamma))| - |CV(\Gamma)|.
\]
\end{itemize}
\end{lemma}
\begin{proof}
For (a), $\Upsilon(\Gamma)$ has no cycles, for otherwise we contradict the maximality of the blocks.  
Next, notice that for any pair of vertices $v,w$ in $\Gamma$, a path in $\Upsilon(\Gamma)$ between blocks  $b_v$ and $b_w$ containing $v$ and $w$ can be represented by a path in $\Gamma$ from $v$ to $w$.  Indeed, for each edge in the path in $\Upsilon(\Gamma)$ joining a block $b$ and a cut vertex $c$, one can choose a path within $b$ between $c$ and any vertex in $b$.  Then if a cut vertex were a leaf in $\Upsilon(\Gamma)$, its removal would leave $\Upsilon(\Gamma)$ connected, contradicting that its removal disconnects $\Gamma$.  
A path in $\Gamma$ from $v$ to $w$ gives rise to a path in $\Upsilon(\Gamma)$ between their respective blocks $b_v$ and $b_w$, so $\Upsilon(\Gamma)$ is connected if $\Gamma$ is.

For (b), the left-hand side counts vertices of $\Gamma$, except that each cut-vertex is necessarily counted multiple times, by part (a).  Specifically, each cut-vertex $v$ is counted as many times as $v$ appears in an edge of $\Upsilon(\Gamma)$.  (So if a cut-vertex $v$ appears in $n$ edges, it is counted $n-1$ extra times.)
\end{proof}

The following easily verified lemma will be useful in Section \ref{HiddenFaceInvolution}:
\begin{lemma}
\label{HiddenCornersLemma}
Suppose $v$ is a vertex in a subgraph $\Gamma' \subset \Gamma$ which is bivalent in $\Gamma'$, 
joined to vertices $u$ and $w$ in $\Gamma'$.  If $\Gamma'' \subset \Gamma'$ is biconnected and contains $v$, then either $\Gamma''$ is one of the two edges incident to $v$ or $\Gamma''$ contains both $u$ and $w$.
\qed 
\end{lemma}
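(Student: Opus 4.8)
The plan is to reduce the statement to one elementary fact about biconnected graphs: every vertex of a biconnected graph has valence at least $2$, the sole exception being the two-vertex graph consisting of a single edge.

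First I would isolate that fact. If $H$ is biconnected and $|V(H)| \ge 2$, then either $H$ is a single edge or $\deg_H(x) \ge 2$ for every $x \in V(H)$. Indeed, $H$ is connected and is not a single vertex, so it has no isolated vertex; and if some $x \in V(H)$ had a unique neighbor $y$, then either $V(H) = \{x, y\}$ and $H$ is precisely the edge $xy$, or $|V(H)| \ge 3$, in which case deleting $y$ would separate $x$ from all the other vertices and hence make $y$ a cut vertex, contradicting biconnectedness.

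Next I would apply this with $H = \Gamma''$. Let $e_1$ and $e_2$ denote the two edges of $\Gamma'$ meeting $v$, with respective other endpoints $u$ and $w$. Since $\Gamma'' \subseteq \Gamma'$, the edges of $\Gamma''$ incident to $v$ form a subset of $\{e_1, e_2\}$, so $\deg_{\Gamma''}(v) \le 2$; and since $\Gamma''$ is biconnected it is not a single vertex, so $|V(\Gamma'')| \ge 2$ and the fact above applies. In the first alternative, $\Gamma''$ is a single edge; being a subgraph of $\Gamma'$ incident to $v$, that edge must be $e_1$ or $e_2$, so $\Gamma''$ is one of the two edges of $\Gamma'$ containing $v$. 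In the second alternative $\deg_{\Gamma''}(v) \ge 2$, which together with $\deg_{\Gamma''}(v) \le 2$ forces $\deg_{\Gamma''}(v) = 2$; then both $e_1$ and $e_2$ lie in $\Gamma''$, and hence so do their endpoints $u$ and $w$.

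I do not expect a real obstacle here: the argument is short and purely combinatorial. The only place wanting a moment's attention is the degenerate configuration in which $e_1$ and $e_2$ are parallel edges between $v$ and a single other vertex (so $u = w$); but this is consistent with the case analysis, since such a $\Gamma''$ simply lands in whichever of the two branches it belongs to and the conclusion holds (trivially, in the second branch).
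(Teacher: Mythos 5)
Your proof is correct. The paper states this lemma without proof (it is flagged as ``easily verified''), and your argument---reducing everything to the fact that a biconnected graph on at least two vertices is either a single edge or has minimum valence $2$, then using $\deg_{\Gamma''}(v)\le 2$ to force either the single-edge case or both $e_1,e_2\subset\Gamma''$---is exactly the natural verification, including the correct handling of the degenerate case $u=w$.
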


\begin{definition}
\label{LabeledGraphs}
A \emph{labeled graph} or \emph{labeled (link) diagram} is a graph $\Gamma$ together with a \emph{labeling}, defined as follows.  If $d$ is odd, a labeling consists of an ordering of the vertices, an orientation on each edge, and additionally an ordering of the two edge-ends of each self-loop on a segment vertex.  
If $d$ is even, a labeling consists of an ordering of the segment vertices and an ordering of the edges (including self-loops on segment vertices).
\end{definition}

For example, line (\ref{Type2Graphs}) in Section \ref{VanishingAndGraphCohomology} below is a linear combination of labeled diagrams.
In the next subsection, when we consider linear combinations of diagrams, two diagrams which differ only by their labelings will be set equal up to a sign.

\subsection{The cochain complex of oriented graphs}
\label{GraphComplex}
The graphs defined above can be organized into a cochain complex, as first observed by Kontsevich \cite{KontsevichFeynman}.  There and in \cite{CCRL-AGT}, the complex was defined over $\Q$ and $\R$ respectively, but one can use coefficients in any ring.\footnote{An abelian group is sufficient for our purposes here, though a ring is useful if one wants to consider multiplicative structures such as the shuffle product of graphs.}
Fix a ring $R$.  We will mainly consider the cases where $R$ is $\R, \Q, \Z,$ or $\Z/p$.  

\begin{definition}
\label{GraphComplexDef}
Let $\LD$ denote the free $R$-module on labeled link diagrams, modulo the following \emph{orientation relations}:
\begin{itemize}
\item
For odd $d$, if $\Gamma$ and $\Gamma'$ differ only by a permutation $\sigma$ of the vertex labels, a reversal of the orientations on $i$ edges, and $j$ transpositions of the orderings of the self-loop edge-ends, then $\Gamma \sim {\mathrm{sign}(\sigma)} (-1)^{i + j} \Gamma'$.
\item
For even $d$, if $\Gamma$ and $\Gamma'$ differ only by a permutation $\sigma$ of the vertex labels and a permutation of the edge-labels, then $\Gamma \sim {\mathrm{sign}(\sigma)\mathrm{sign}(\tau)} \Gamma'$.
\end{itemize}
We impose two further relations:
\begin{itemize}
\item
If $\Gamma$ has a pair of vertices joined by multiple (non-arc) edges, then $\Gamma \sim 0$.
\item
If $\Gamma$ has a free vertex with a self-loop, then $\Gamma \sim 0$.
\end{itemize}
\end{definition}

Call the resulting equivalence class of a labeling an \emph{orientation} and the resulting equivalence class of graph an \emph{oriented graph} or \emph{oriented diagram}.  For a labeled or oriented graph $\Gamma$, let $|\Gamma|$ be the underlying unoriented graph.  For two oriented graphs, an isomorphism of their underlying unoriented graphs may be \emph{orientation-preserving} or \emph{orientation-reversing}.

\begin{remark}
There are other equivalent definitions of orientations on graphs.  For example, for odd $d$, one can use a cyclic ordering of the edge-ends at each vertex (e.g.~induced by a planar embedding of the graph) rather than vertex-labelings and edge-orientations.  Bar-Natan takes this approach in \cite{BarNatanTopology}.  The definition above is however more amenable to defining configuration space integrals.  Kuperberg and Thurston \cite[Section 3.1]{KuperbergThurston} provide a thorough discussion of orientations on graphs in this context.
\end{remark}

\begin{definitions} 
\label{DefectOrderDef} \ 
\begin{itemize}
\item
Define the \emph{defect} of a diagram $\Gamma$ to be $2|E(\Gamma)| - |V_{seg}(\Gamma)| - 3|V_{free}(\Gamma)|$.  
\item
Define the \emph{order} of $\Gamma$ to be $|E(\Gamma)| - |V_{free}(\Gamma)|$.
\item
Let $\LD^{k,n}$ denote the submodule of diagrams of defect $k$ and order $n$, so that $\LD = \bigoplus_{k,n} \LD^{k,n}$. 
\end{itemize}
\end{definitions}

Cattaneo et al.~use the term ``degree'' instead of ``defect.''  As in \cite{KMV}, we reserve the term ``degree'' for cohomological degree.
The diagrams with defect 0 are precisely those which are (uni)trivalent, in the sense that each free vertex is trivalent and each segment vertex has one edge and two arcs emanating from it.  It is precisely the defect-0 cocycles which index finite-type invariants of knots and links in $\R^3$.

The coboundary operator $\delta: \LD^{k,n} \to \LD^{k+1,n}$ is defined to encode which configuration space integrals yield closed forms. (See Section \ref{VanishingAndGraphCohomology}.)  
We first need one further construction on graphs.  
If $e$ is an edge or arc in a labeled graph $\Gamma$ joining vertices $i,j$, the \emph{contraction} $\Gamma/e$ of $e$ has the usual quotient of CW-complexes as its underlying unoriented graph. 
The vertex labels on $\Gamma/e$ are given by lowering by 1 those vertex labels greater than $\max(i,j)$ and by assigning $\min(i,j)$ to the vertex that is image of $e$.  For $d$ odd, if the contraction of $e$ introduces a self-loop, then the endpoints of $e$ are necessarily segment vertices, and the order of edge-ends in the self-loop is determined by the orientation of $L$.
For $d$ even, the labels on the edges are given by lowering by 1 those labels greater than that of $e$.

We now define $\delta$ on each graph $\Gamma$ and extend it to $\bigoplus_{k,n} \LD^{k,n}$ by linearity.  On a graph $\Gamma$, 
it is defined as a signed sum of edge contractions
\[
\delta \Gamma := \sum_{e} \eps(e) \Gamma/e
\]
where the sum is taken over all arcs $e$ and all edges $e$ that are not chords or self-loops.  We first define the sign $\eps(e)$ for $d$ odd.  Suppose $e$ is an edge or arc with endpoints $i < j$, and set
\begin{equation}
\label{OddSigns}
\eps(e):= 
\left\{
\begin{array}{ll}
(-1)^{j-1} & \text{ if } e=(i \to j) \\
(-1)^j & \text{ if } e=(i \leftarrow j)
\end{array}
\right.
\end{equation}
If $d$ is even and $e$ is an \emph{arc} with endpoints $i < j$, define $\eps(e)$ as above, where the arc orientation comes from the orientation of $L$.
If $d$ is even and $e$ is an \emph{edge}, set 
\begin{equation}
\label{EvenSigns}
\eps(e) = 
(-1)^{e+|V_{seg}(\Gamma)|}
\end{equation}
where by abuse of notation $e$ also denotes the label on this edge.

The following theorem was proven by Cattaneo et al.~over $\R$, but the proof shows that $\delta^2=0$ on each element of a basis (given by a graph from each unoriented isomorphism class, with some choice of orientation). 
So the statement holds over any ring.  Similar constructions appear in other contexts in topology, all of which were outlined in the work of Kontsevich \cite{KontsevichFeynman}.
\begin{theorem}[\cite{CCRL-AGT}]
The sequence $(\bigoplus_{k,n} \LD^{k,n}, \delta)$ is a complex, i.e., $\delta^2=0$.
\end{theorem}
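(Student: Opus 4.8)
The plan is to fix a single diagram $\Gamma$ and show that the terms of $d^2\Gamma$ cancel in pairs. Expanding,
\[
d^2\Gamma \;=\; \sum_{e}\eps(e)\, d(\Gamma/e) \;=\; \sum_{e}\sum_{f}\eps(e)\,\eps_e(f)\,(\Gamma/e)/f,
\]
where $e$ runs over the arcs and non-self-loop edges of $\Gamma$, $f$ over those of $\Gamma/e$, and $\eps_e(f)$ denotes the sign \eqref{OddSigns}, resp.\ \eqref{EvenSigns}, computed in $\Gamma/e$. Each such $f$ is the image of a unique arc or edge $\tilde f \neq e$ of $\Gamma$, and this image is a non-self-loop exactly when $\tilde f$ and $e$ are not parallel (do not share both endpoints) --- a condition symmetric in $e$ and $\tilde f$. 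Since contracting $e$ and then $\tilde f$ yields the same quotient CW-complex as contracting $\tilde f$ and then $e$, the diagrams $(\Gamma/e)/\tilde f$ and $(\Gamma/\tilde f)/e$ have the same underlying unoriented graph. Thus, working in $\D$ (so that the multiple-edge relation and the orientation relations are imposed), the surviving terms of $d^2\Gamma$ are indexed by ordered pairs $(e,\tilde f)$ of distinct, non-parallel arcs/edges of $\Gamma$, and the transposition $(e,\tilde f)\mapsto(\tilde f,e)$ is a candidate sign-reversing involution. It then remains to check that this involution has no orphans and that it is genuinely sign-reversing.

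For the absence of orphans, one must rule out the possibility that, for some pair, one order of contraction lands on a diagram that is zero in $\D$ while the other does not. A contracted edge can become a self-loop only when the two edges are parallel, a case already excluded and symmetric in the two edges; and a direct inspection shows that any configuration of $e$ and $\tilde f$ forcing a multiple edge in the doubly-contracted diagram (the two lying in a common triangle, or being opposite edges of a $4$-cycle, and so on) forces one regardless of the order of contraction, so that term vanishes on both sides. Hence every surviving term has exactly one partner under the transposition.

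The remaining, and central, point is the sign identity $\eps(e)\,\eps_e(\tilde f) = -\,\eps(\tilde f)\,\eps_{\tilde f}(e)$. Here one tracks how contracting one of the two arcs/edges changes the data that enters the sign for the other: for odd $d$, contracting $e$ with endpoints $i<j$ lowers by $1$ every vertex label exceeding $j$ and names the new vertex $i$, and may reverse an edge orientation or fix a self-loop edge-end ordering from the orientation of $L$; one then runs through the cases (whether $e$ and $\tilde f$ are disjoint or share a vertex, and whether the shared and shifted labels lie above or below $j$) and checks that the exponents in \eqref{OddSigns} combine with the sign of the induced vertex permutation to flip the overall sign. For even $d$ the same bookkeeping applies with \eqref{EvenSigns} in place of \eqref{OddSigns}, now following the edge-label ordering and the quantity $|V_{seg}(\Gamma)|$, which decreases by one precisely when an arc, or an edge between two segment vertices, is contracted. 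Since $d$ is defined on diagrams and extended $R$-linearly, the pairwise cancellation gives $d^2=0$ over an arbitrary ring $R$. The one genuine obstacle is this sign computation: it is entirely mechanical but unavoidably case-laden, and it is exactly the verification carried out in detail in \cite{CCRL-AGT}.
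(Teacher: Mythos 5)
Your proposal is correct and follows essentially the same route as the paper, which itself gives no independent argument but cites \cite{CCRL-AGT} and observes that the cancellation happens basis-element-wise (hence over any ring $R$): the terms of $d^2\Gamma$ are paired by swapping the order of the two contractions, the pairing is sign-reversing by the key identity $\eps(e)\,\eps_e(\tilde f)=-\eps(\tilde f)\,\eps_{\tilde f}(e)$, and the degenerate cases (parallel edges, multiple edges in the quotient) are symmetric in $e$ and $\tilde f$. Like the paper, you defer the case-by-case sign verification to \cite{CCRL-AGT}, so there is nothing substantively different to compare.
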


The combinatorial input for our construction will be a graph cocycle $\gamma$, which is a linear combination of oriented diagrams, but can be represented (in many ways) by a linear combination of labeled diagrams.

\subsection{Gluing manifolds with corners}
\label{GluingMfds}
Our construction will use a bundle where the fiber is a quotient of a manifold with corners.  The quotient will involve a limited class of operations.  We now check that such a quotient has a fundamental class.

Let $\R_+:=[0,\infty)$.  A \emph{$k$-dimensional manifold with corners} $X$ is a space where each point $x$ has a neighborhood diffeomorphic to $\R^{k-\ell} \x \R_+^{\ell}$ for some $\ell$ with $0 \leq \ell \leq k$.  We call $\ell$ the \emph{codimension} $c(x)$ of $x$, and we call the closure of a component where $c(x)=\ell$ a \emph{corner of codimension} $\ell$.  We also use the synonymous term \emph{face of codimension} $\ell$, especially if $\ell=1$.  
A corner of a manifold with corners is again a manifold with corners.

\begin{proposition}
\label{GluedMfdFundClass}
Let $\tM$ be a (possibly disconnected) compact, oriented $k$-dimensional manifold with corners, with boundary $\d \tM$.  Let $M$ be a connected quotient by a map $\psi: \tM \to M$ which can be obtained by a finite sequence of the following types of operations:
\begin{enumerate}
\item[(a)]
gluing a pair of codimension-1 faces by an orientation-reversing diffeomorphism,
\item[(b)]
folding a codimension-1 face $\widetilde{S}$ by a finite-order, orientation-reversing diffeomorphism 
such that 
$H_{k-1}(\psi(\widetilde{S}), \psi(\d \widetilde{S})) \cong \Z$ 
and $H_{i}(\psi(\widetilde{S}), \psi(\d \widetilde{S})) =0$ for all $i \geq k$, and
\item[(c)]
taking the quotient of a corner $\widetilde{S}$ such that
$H_{i}(\psi(\widetilde{S}), \psi(\d \widetilde{S})) =0$ for all $i \geq k-1$.
\end{enumerate}
Let $\d M$ be the union of the images of all codimension-1 faces whose interiors incur no identification.  Then 
$H_i(M, \d M; \,\Z) =0$ for all $i>k$, and $H_k(M, \d M; \,\Z) \cong \Z$.  Moreover, if the chain $a \in C_k(\tM)$ represents the fundamental class $[\tM, \d \tM] \in H_k(\tM, \d \tM; \,\Z)$, then $[\psi_*(a)]$ is a generator of $H_k(M, \d M; \,\Z)$.
\end{proposition}

The universal coefficient theorem then implies $H^i(M, \d M; \,\Z) =0$ for all $i>k$, and that $H^k(M, \d M; \,\Z) \cong \Z$.
The last statement of the Proposition implies that the generator $[\psi_*(a)] \in H_k(M, \d M; \,\Z)$ maps to $[\tM, \d \tM] \in H_k(\tM, \d \tM; \,\Z)$ under the map induced by the quotient $M/ \d M \to \tM/ \d \tM$, which is defined because $\d M \subset \psi(\d \tM)$.  Hence this induced map on homology is injective.  

\begin{definition}
\label{NicelyGluedMfdDef}
Call a pair $(M, \d M)$ consisting of a quotient $M$ and a subspace $\d M \subset M$ satisfying the hypotheses of Proposition \ref{GluedMfdFundClass} a \emph{nicely glued manifold with corners}.  
\end{definition}

Note that part of the definition is that $M$ is connected.  We will use this notion in Section \ref{GeneralIntegration}.

\begin{proof}[Proof of Proposition \ref{GluedMfdFundClass}]
First suppose that $M$ is the quotient of $\tM$ by a single operation of type (a), i.e.~a gluing of a pair of codimension-1 faces.  
Then $\tM$ must have either one or two components.
Let $S$ be the image of the glued pair of faces in $M$, so $\d M$ is the image of the remaining faces in $\tM$.  Part of the long exact sequence of the triple $M \supset (\d M \cup S) \supset \d M$ is 
\begin{equation}
\label{LES3pleGlue1}
H_k(\d M \cup S, \d M) \to H_k(M, \d M) \to H_k (M, \d M \cup S) \to H_{k-1}(\d M \cup S, \d M)
\end{equation}
where we have omitted $\Z$ coefficients from the notation.  
We replace the pair $(\d M \cup S, \d M)$ by $(S, \d S)$ since their quotients are homeomorphic.  Similarly we  replace $(M, \d M \cup S)$ by $(\tM, \d \tM)$.
Since $S$ is a $(k-1)$-manifold with boundary, this sequence becomes
\begin{equation}
\label{LES3pleGlue}
0={H_k(S, \d S)} \to H_k(M, \d M) \to H_k (\tM, \d \tM) \to H_{k-1}(S, \d S) \cong \Z.
\end{equation}
Thus $H_k(M, \d M)$ is the kernel of $H_k (M, \d M \cup S) \to H_{k-1}(\d M \cup S, \d M)$ in \eqref{LES3pleGlue1}.  We analyze the map using the commutative diagram below, by going through the bottom row and then to the rightmost group. 
\begin{equation}
\label{LES3pleSquare}
\xymatrix{
H_k(M, \d M \cup S) \ar[r] & H_{k-1}(\d M \cup S, \d M) & \ar[l]_-\cong H_{k-1}(S, \d S) \cong \Z \\
H_k(\tM, \d \tM) \ar[u]_-\cong^-{\psi_*} \ar[r]^-\d & H_{k-1}(\d \tM) \ar[u]_-{\psi_*}& 
}
\end{equation}
Since $\d [\tM, \d \tM] = [\d \tM]$, the composition in the top row sends the class corresponding to $[\tM, \d \tM]$ to $[S, \d S] - [S, \d S]=0$, because the gluing is by an orientation-reversing map.  
If $\tM$ has one component, $[\tM, \d \tM]$ generates the domain, so the kernel is indeed $\Z$.
If $\tM$ has two components, then the image of the  fundamental class of either component is $\pm [S, \d S]$.  So in this case too, the kernel is generated by $[\tM, \d \tM]$, hence isomorphic to $\Z$.  This proves all the statements about $H_k$ for an operation of type (a).  The homology groups vanish for $i>k$ by the sequence \eqref{LES3pleGlue} with $k$ replaced by $i$. 

Next suppose that $M$ is the quotient of $\tM$ by a single operation of type (b), i.e., the folding of a single face by an orientation-reversing diffeomorphism.  Then $\tM$ must be connected.  
Let $\widetilde{S}$ be the face in $\tM$ that is folded, let $S := \psi(\widetilde{S})$, and let $\d S := \psi(\d \widetilde{S})$.
Then $\d M$ is the image of the remaining faces in $\tM$.  
As with a type (a) operation, the long exact sequence for this triple gives us the sequence \eqref{LES3pleGlue},
though in this case $\tM$ is connected, so $H_k(\tM, \d \tM) \cong \Z$.  So it suffices to show that the image of $[\tM, \d \tM]$ in the right-hand group in \eqref{LES3pleSquare} is zero.  In turn, it suffices to show that $\psi_*[\d \tM]=0$.  We unravel the right-hand half of \eqref{LES3pleSquare} into the following commutative diagram, which shows this, and which we justify below.
\begin{equation}
\label{LES3pleSquare2}
\xymatrix{
H_{k-1}(\d M \cup S) \ar[r]  & H_{k-1}(\d M \cup S, \d M) & \ar[l]_-\cong H_{k-1}(S, \d S)  \\
H_{k-1}(\d \tM) \ar[u]_-{\psi_*} \ar[r]^-\cong &  H_{k-1}(\d \tM, \d \tM - \widetilde{S})\ar[u]_-{\psi_*} & \ar[l]_-\cong H_{k-1}(\widetilde{S}, \d \widetilde{S}) \ar[u]_-0
}
\end{equation}
The lower-left horizontal map is an isomorphism because $\d \tM - \widetilde{S}$ is a $(k-1)$-manifold with nonempty boundary.  
We check that the right-hand vertical map is indeed zero.  The folding diffeomorphism reverses orientation, so its order must be an even number, say $2j$.  Then we can take a non-fixed point $x \in S$ and a small disk neighborhood $U$ whose preimage in $\widetilde{S}$ is a disjoint union $\coprod_{i=1}^{2j} U_i$, with each $U_i \cong U$.  By excision, the image of the right-hand map is the image of the fundamental class of $\widetilde{S}$ under $\bigoplus_{i=1}^{2j} H_{k-1}(U_i, U_i - x_i) \to H_{k-1}(U, U-x)$.  Because the diffeomorphism is orientation-reversing, this image is $j-j=0 \in \Z$.  We thus reach the desired conclusions about $H_k$, and the conclusion about $H_i$, $i>k$ is an easy application of the sequence \eqref{LES3pleGlue}, as in case (a).

Finally, suppose  $M$ is the quotient of $\tM$ by a single operation of type (c), i.e., the collapse of a single corner $\widetilde{S}$ to a manifold of strictly lower dimension.  Then $\tM$ is connected.  Let $S:=\psi(\widetilde{S})$, and let $\d S= \psi(\d \widetilde{S})$.
The exact sequence of the triple $M \supset (\d M \cup S) \supset S$ gives a sequence of four terms similar to \eqref{LES3pleGlue}.  
By hypothesis, both outer terms are 0 and the third term is $\Z$.  This completes the proof in this case for $H_k$, and $H_i$, $i>k$, is even easier.

The proof is complete if $\tM \twoheadrightarrow M$ involves only one operation.  For an arbitrary finite number of operations, we proceed by induction on the number $p$ of operations, restricting to a subset of components at each stage if necessary.  The base case is $M$ itself, and the induction step proceeds as above, with the result of $p -1$ operations playing the role of $M$, and the result of $p$ operations playing the role of $\tM$.  The same arguments apply.
\end{proof}

\subsection{Fiberwise integration and pushforward in singular cohomology}
\label{GeneralIntegration}
Let $\pi: X \to B$ be an orientable fiber bundle, where the base $B$ is a manifold.  Assume the fiber $F$ is a compact, connected $k$-dimensional manifold, possibly with boundary, or even corners.
Assume also that the fiber $F$ is oriented, by which we mean that the bundle is oriented.  At the level of de Rham cochains, one can always define a map $\int_F: C^{p+k}_{dR}(X) \to C^p_{dR}(B)$ called \emph{integration along the fiber} or \emph{pushforward}.  A notable property is that in general, this pushforward might not be a chain map: Stokes's Theorem implies that 
\[
d \int_F \alpha = \int_F {d \alpha}  \pm \int_{\d F} \alpha |_{\d X}.
\]
Thus it will be a chain map if, for example, $\alpha \in C^{p+k}(X, \d X)$, or if we consider only $\alpha$ in some subcomplex for which $\int_{\d F} \alpha |_{\d X} =0$.  
For more details, see the books of Bott and Tu \cite{BottTu} and Greub, Halperin, and Vanstone \cite{GHV} or the paper of Bott and Taubes \cite[Appendix]{BottTaubes}.
We now describe an approach via singular cohomology, in which context such a map may also be called a \emph{pushforward map}, as well as a \emph{shriek map}, \emph{umkehr map}, \emph{Gysin homomorphism}, or \emph{pre-transfer map}.

Ordinary integration over a manifold $M$ is given by pairing with the fundamental class $[M]$.  Similarly, integration over the fiber of a bundle $F \to X \to B$ can be described using the fundamental class $[F]$ of the fiber, though to proceed in sufficient generality, one needs more machinery than in the case where $B$ is a point.  We will use the Serre spectral sequence, as explained by Morita \cite[Section 4.2.3]{MoritaGeomCharClasses} or Bott and Tu \cite[pp.~177-179]{BottTu}.

Let $\pi: X \to B$ be a fiber bundle as above.  The assumption that the bundle is orientable implies that $\pi_1(B, b)$ acts trivially on $H^k(F, \d F) \cong \Z$ for any choice of basepoint $b$.  
Consider the Serre spectral sequence for this bundle in cohomology with integer coefficients.
For simplicity, first consider the case where $\d F =\varnothing$.
Then $H^k(F; \, \Z) \overset{\cong}{\to} \Z$, where the map can be given by pairing with the fundamental class $[F] \in H_k(F;\,\Z)$ determined by the orientation, and $H^i(F; \, \Z) =0$ for all $i >k$.    This immediately implies that $E_\infty^{p, k}$ is a subspace of  $E_2^{p, k}$ for any $p\geq 0$.  Furthermore, $E^{p,q}_\infty$ in general is isomorphic to the filtration quotient $\mathcal{F}^{p+q}_{p} / \mathcal{F}^{p+q}_{p+1}$, where $\mathcal{F}^{p+q}_*$ is a decreasing filtration of $H^{p+q}(X)$, induced by the increasing filtration of $B$ by skeleta.  
Since the fiber $F$ is $k$-dimensional, the $E_\infty$ page is zero above the $k$-th horizontal line, and $H^{p+k}(X)$ is thus equal to $\mathcal{F}^{p+k}_{p}$.  Let $\pi^!$ be the composition
\begin{equation}
\label{IntegrationBySSS}
H^{p+k}(X;\, \Z) = \mathcal{F}^{p+k}_{p} \twoheadrightarrow
\mathcal{F}^{p+k}_{p} / \mathcal{F}^{p+k}_{p+1} \cong
E_\infty^{p, k} \incl E_2^{p, k}
\cong  H^{p}(B; \, H^k(F;\,\Z)) 
\overset{\cong}{\to} H^{p}(B; \, \Z).
\end{equation}

If $F$ has nonempty boundary $\d F$, let $\d X$ be the corresponding subspace of $X$.
Then we have an analogous map $\pi^!: H^{p+k}(X, \d X; \, \Z) \to H^{p}(B; \, \Z)$ by replacing $[F] \in H_k(F)$ by $[F, \d F] \in H_k(F, \d F)$.
By the universal coefficient theorem, $H^*(F, \d F)$ satisfies the same hypotheses with $\R$ coefficients as with $\Z$ coefficients, so there is also a map  $H^{p+k}(X, \d X; \, \R) \to H^{p}(B; \, \R)$ on singular cohomology with $\R$ coefficients.  In addition, the same is true for $\Z/p$ coefficients.

\begin{lemma}
\label{IntegrationAndSSS} \ 
Suppose $F\to X\to B$ is an orientable fiber bundle with the fiber $F$ a compact, connected, oriented, $k$-dimensional manifold with corners.
Then the pushforward map $\pi^!$ from the Serre spectral sequence agrees with fiberwise integration $\int_F$.  More precisely, if $\int$ denotes de Rham isomorphisms on both $B$ and $X$, then $\pi^! \circ \int = \int \circ \int_F$ as maps
\[
H^{p+k}_{dR}(X,  \d X) \to H^p(B;\, \R).
\]
\end{lemma}

A sketch of one proof is as follows.  Represent a cohomology class by a $(p+k)$-form, and write the form in local coordinates $dx_I \, dt_J$.  Observe that taking the quotient $\mathcal{F}^{p+k}_p \twoheadrightarrow \mathcal{F}^{p+k}_p / \mathcal{F}^{p+k}_{p+1}$ corresponds precisely to forgetting terms where $|J|<k$.  A term with $|J|=k$ represents a class in $H^p(B; \, H^k(F, \d F))$, and the isomorphism to $H^p(B)$ agrees with the one given by integration over $F$.

If the fiber $F$ has multiple components, Lemma \ref{IntegrationAndSSS} holds with almost the same proof.  Instead of an isomorphism $H^k(F, \d F) \cong R$, the fundamental class induces a surjection $H^k(F, \d F) \to R$, where $R = \R, \Z$, etc.~is the coefficient ring.  As in the connected case, the map comes from the orientation of $F$, i.e.~it is given by pairing with the chosen fundamental class $[F, \d F]$.

For the proof of the Main Theorem, we will use Lemma \ref{GluedIntegrationAndSSS} below, which is an analogue of Lemma \ref{IntegrationAndSSS} for the case where the fiber $F$ is a nicely glued manifold with corners.  We will be able to integrate over such $F$, but a de Rham map on simplices will not be readily apparent.  The proof  of this analogue will essentially use singular chains rather than de Rham cochains.  
Since Lemma \ref{IntegrationAndSSS} is just a warmup for Lemma \ref{GluedIntegrationAndSSS} and will not be used directly, we refrain from further details on the proof sketched above. 

\subsubsection{Fiberwise integration for a glued manifold}
\label{GluedMfdIntegration}
Let $\widetilde{F}\to \widetilde{X}\to B$ be an orientable fiber bundle with $\widetilde{F}$ a compact, oriented (possibly disconnected) $k$-dimensional manifold with corners.
Let  $\psi: \widetilde{X} \twoheadrightarrow X$ be a fiberwise quotient, yielding a bundle $F \to X \to B$.
For our purposes, it suffices to consider the situation where there is a subbundle $\d F \to \d X \to B$ such that $(F, \d F)$ is a {nicely glued manifold with corners}, as in Definition \ref{NicelyGluedMfdDef}.

Although $F$ is not obviously a smooth manifold with corners, we can make make sense of smooth functions, differential forms and integration on $F$.  We use an idea appearing for example in works on stratifolds \cite{Ewald-PhD, Kreck-Book}, though we will make no use of stratifolds themselves.  First define smooth functions $F \to \R$ as those continuous functions for which the composition $\widetilde{F} \overset{\psi}{\twoheadrightarrow} F \to \R$ is smooth.  A differential $q$-form on $F$ is then defined as an element that is locally a linear combination of terms $g\ df_1 \dots df_q$, where $g, f_1, \dots, f_q$ are smooth functions on an open subset $U \subset F$.  
One can then define de Rham cohomology of $F$ and thus the total space $X$, as well as de Rham cohomology relative to a subspace of $F$ or $X$.  The integral of such a form is defined by 
\begin{equation}
\label{StratifoldIntegration}
\int_F \omega := \int_{\widetilde{F}} \psi^*\omega.
\end{equation}
This is sufficient to define fiberwise integration at the level of forms.  

Since $(F, \d F)$ is a nicely glued $k$-manifold with corners, Proposition \ref{GluedMfdFundClass} tells us that 
$H^i(F, \d F; \,\Z)$ is $0$ for all $i>k$ and $H^k(F, \d F; \, \Z) \cong \Z$.  
Moreover, it tells us that there is a generator of $H_k(F, \d F; \, \Z) \cong \Z$ that maps to the fundamental class $[\tF, \d\tF] \in H_k(\tF, \d \tF; \, \Z)$ determined by the orientation of $\tF$.  
The orientation of $\tF$ thus ultimately determines the isomorphism $H^k(F, \d F; \, \Z) \cong \Z$.
This suffices to define the pushforward map from the Serre spectral sequence.

Fiberwise integration of forms does not always descend to cohomology for manifolds with boundary, but we now check that it does for forms in $C^*_{dR}(X, \d X)$, using that $(F, \d F)$ is a nicely glued manifold with corners.  We first claim that for all $\eta \in C^*_{dR}(X, \d X)$, we have $\int_{\d \widetilde{F}} \psi^*\eta|_{\d X} = 0$.
Indeed, this is because all the faces in $\d \tF$ are either mapped into $\d F$ (by a diffeomorphism or a collapse to lower dimension) or glued by orientation-reversing diffeomorphisms.  
Then by Stokes's Theorem,
\[
d \int_{F} \eta 
:=d \int_{\tF} \psi^*\eta 
=  \int_{\tF} d (\psi^*\eta) \pm \int_{\d \tF} \psi^* \eta|_{\d X}
=\int_{\tF} \psi^*(d \eta)
=: \int_F d \eta
\]
so fiberwise integration is a chain map and hence descends to cohomology.  
Thus we may compare at the level of cohomology fiberwise integration to pushforward from the Serre spectral sequence, provided the pair of fibers $(F, \d F)$ of the pair of bundles $(F, \d F) \to (X, \d X) \to B$ is a nicely glued manifold with corners.

\begin{lemma}
\label{GluedIntegrationAndSSS}
Let $F \to X \overset{\pi}{\to} B$ be the fiberwise quotient of an orientable bundle $\tF \to \tX \overset{\widetilde{\pi}}{\to} B$  by a map $\psi: \tX \to X$.  Assume that $\tF$ is a compact, oriented, nicely glued $k$-dimensional manifold with corners.
Let $\Phi: (X, \d X) \to (Y, Z)$ be a map to a closed manifold $Y$ such that $H^{p+k}(Y,Z) \cong H^{p+k}(Y)$.
Let $[\omega] \in H^{p+k}(Y)$, where $\omega$ denotes a representative de Rham cochain and $[\omega]$ its image in singular cohomology.
Then the pushforward  map
\[
\pi^!: H^{p+k}(X,  \d X;\, \R) \to H^p(B;\, \R)
\]
defined by the Serre spectral sequence and the class $[F, \d F]$ agrees with integration over the fiber $F$ in de Rham cohomology, in the sense that $\left[ \int_F \Phi^*\omega \right] = \pi_!(\Phi^*[\omega])$
\end{lemma}

\begin{proof}
Unraveling the equality at the end of the Lemma statement, we must verify that a class $\omega$ in the upper-left corner of the following diagram maps to the same element in $H^p(B;\, \R)$.  As in Lemma \ref{IntegrationAndSSS}, $\int$ denotes de Rham isomorphisms, $H^*_{dR}(-)$ denotes de Rham cohomology, and $H^*(-;\, \R)$ denotes singular cohomology with $\R$ coefficients.
\begin{equation}
\label{IntVsPushfwd}
\xymatrix{
H^{p+k}_{dR}(X, \d X) \ar[d]_-{\int_F} & \ar[l]_-{\Phi^*} H^{p+k}_{dR}(Y,Z) \ar[ddd]^-\cong_{\int} \\
H^p_{dR}(B) \ar[d]^-\cong_{\int} & \\
H^p(B; \,\R) & \\
H^{p+k}(X, \d X; \,\R) \ar[u]^-{\pi^!} & \ar[l]_-{\Phi^*} H^{p+k}(Y,Z; \, \R)
}
\end{equation}
We will consider the effect on a cycle $\sigma$ representing an element in $H_p(B)$.  
The sequence \eqref{IntegrationBySSSDual} below is a dual version of \eqref{IntegrationBySSS} in homology, where we now use $\mathcal{F}_q^*$ to denote the increasing filtration of $H_q(X, \d X)$ induced by the $E^\infty$ page.  We denote the following composition by $\pi_!$.
\begin{equation}
\label{IntegrationBySSSDual}
H_p(B;\,\Z) \cong 
H_p(B;\, H_k(F, \d F;\,\Z)) \cong 
E^2_{p,k} \twoheadrightarrow
E^\infty_{p,k}\cong
\mathcal{F}_{p+k}^{p} / \mathcal{F}_{p+k}^{p-1} =
 \mathcal{F}_{p+k}^{p} \incl
H_{p+k}(X, \d X;\,\Z)
\end{equation}
Essentially, we will represent this composition at the chain level.  For this purpose, it is convenient to represent singular homology not by singular simplices but by singular cubes, as in Serre's thesis \cite[Chapter II]{Serre-Thesis}.  Thus a $p$-cycle $\sigma$ in $B$ is a formal linear combination of maps $c:I^p \to B$.  
We will lift $\sigma$ first to a $(p+k)$-chain on $\tX$ via a map $C_p(B) \otimes C_k(\tF) \to C_{p+k}(\tX)$.  It suffices to define this map on an element $c\otimes \widetilde{f}$ for each cube $c$.  In this case we can take the pullback $\tX'$ of $\tX \to B$ along $c:I^p \to B$, and choose a trivialization $I^p \x \tF \overset{\cong}{\longrightarrow} \tX'$.  Fix a chain $\widetilde{f}$ representing the fundamental class $[\tF, \d \tF]$ of the (possibly disconnected) manifold with corners $\tF$, and send $\sigma \in C_p(B)$ to the image of $\sigma \otimes \widetilde{f}$, a $(p+k)$-chain $\widetilde{\pi}^{-1}(\sigma) \in C_{p+k}(\tX)$.  

Let $f:=\psi_*(\widetilde{f})$.  Then by Proposition \ref{GluedMfdFundClass}, $f$ represents $[F, \d F] \in H_k(F/\d F; \,\Z)$.  We will perform the same construction as above with $f$ instead of $\widetilde{f}$ to get a chain on $X$, and we let $\pi^{-1}(\sigma)$ be its image in $C_{p+k}(X, \d X)$.  For each cube $c$ in $B$, we choose the trivialization of the pullback $X'$ to come from the one on $\tX'$ and the map on pullbacks induced by $\psi$, i.e., from the composition $I^p \x \tF \overset{\cong}{\longrightarrow} \tX' \to X'$. This guarantees that 
\begin{equation}
\label{TwoLiftsOfSigma}
\pi^{-1}(\sigma)=\psi_*(\widetilde{\pi}^{-1}(\sigma)).
\end{equation} 
By the same arguments that one uses in the de Rham theorem to pass from singular simplices to smooth singular simplices, we can modify $\widetilde{\pi}^{-1}(\sigma)$ if necessary so that each cube in $\widetilde{\pi}^{-1}(\sigma)$ is smooth on the interior of $\tX$.  We accordingly modify $\pi^{-1}(\sigma)$, using \eqref{TwoLiftsOfSigma}.

Finally, consider the homology Serre spectral sequence for $X\to B$ using relative chains in $C^*(X,\d X)$.  There $\pi^{-1}(\sigma)$ represents an element of $E^0_{p,k}$, which is a quotient of $C_{p+k}(X, \d X)$ but is also isomorphic to $C_p(B) \otimes C_k(F, \d F)$.  It represents an element of $E^1_{p,k} \cong C_p(B) \otimes H_k(F, \d F)$ because $f$ is closed and an element of $E^2_{p,k}\cong H_p(B) \otimes H_k(F, \d F)$ because $\sigma$ is closed.  It represents an element of $E^\infty_{p,k}$ because the spectral sequence is zero above the $k$-th horizontal line.  From its construction out of the fundamental cycle $f$, we see that $\pi^{-1}(\sigma)$ indeed represents $\pi_!([\sigma])$.

Returning to \eqref{IntVsPushfwd}, we describe $\pi^!$ in terms of the effect of $\pi_!$ on a chain.  We thus interpret the value of the clockwise composition on the cycle $\sigma$ as an integral over the image of $\sigma$ in $Y$, namely
\begin{equation}
\label{CWComp}
\int_{\Phi_* ({\pi}^{-1}(\sigma))}  \omega = \int_{\Phi_* \psi_* (\widetilde{\pi}^{-1}(\sigma))}  \omega.
\end{equation}
On the other hand, fiberwise integration followed by integration over $\sigma$ can be formulated as one integral in the total space $X$, so the value of the counterclockwise composition on $\sigma$ is 
\begin{equation}
\label{CCWComp}
\int_{{\pi}^{-1}(\sigma)}  \Phi^* \omega :=
\int_{\widetilde{\pi}^{-1}(\sigma)}  \psi^*\Phi^* \omega
\end{equation}
where we have decorated the equality to indicate that it comes from the defining equation \eqref{StratifoldIntegration}.
Since $\Phi \circ \psi$ is a map of smooth manifolds, the right-hand sides of \eqref{CWComp} and \eqref{CCWComp} are equal.
\end{proof}

\section{Compactified configuration spaces}
\label{Section3}
We will now define and describe a certain compactification of configuration spaces, essentially the one used by Kuperberg and Thurston \cite{KuperbergThurston}.
In Section \ref{Blowups} we cover generalities about blowups, neat submanifolds, and mutual transversality, with intermittent reference to the application in mind.  We then define the compactification in Section \ref{CompactificationDef} and describe its corner structure in Section \ref{CornerDescription}.

In slightly more detail, for any space $X$, let $C_q(X)$ be the space of distinct ordered $q$-tuples in $X$, the (uncompactified, or open) \emph{configuration space} of $q$ points in $X$.  For us $X$ will always be a manifold $M$, often Euclidean space.
We ultimately want to perform an analogue of integration over configuration spaces, so we need to replace them by compact versions.  
We will define a {compact space} $C_\Gamma[M]$ of points in $M$ labeled by the vertices of $\Gamma$, where $\Gamma$ is a $\geq 3$-valent graph as in Section \ref{Basics}.  
This space will have the important property that for $M=\R^d$, it admits maps to $S^{d-1}$ for each edge $(i,j)$ in $\Gamma$.
The graph $\Gamma$ may be oriented as in Definition \ref{LabeledGraphs}, but  $C_\Gamma[M]$ will depend on only the underlying unoriented graph of $\Gamma$.  Moreover it will depend only on the underlying graph $T(\Gamma)$, where edges and arcs are not distinguished from each other.
Throughout, we will mainly use the letters $i,j$, etc.~for labels on vertices of $\Gamma$ and $x_i, x_j$, etc.~for the corresponding points in $M$.  We may occasionally use the same symbol for a vertex and the corresponding configuration point.

\subsection{Iterated blowups, neatness, and mutual transversality}
\label{Blowups}
For the purpose of defining iterated blow-ups, let $X$ be an $n$-dimensional manifold with corners, as discussed at the beginning of Section \ref{GluingMfds}.  We will be able to blow up along \emph{neat submanifolds}; a categorical definition due to Laures \cite{Laures} was used in our previous related work \cite{Rbo}, but the following definition will suffice here.

\begin{definition}
\label{Neat}
Let $X$ and $Y$ be manifolds with corners of dimensions $n$ and $m$.
We say $Y \subset X$ is \emph{a neat(ly embedded) submanifold of $X$} or \emph{neat in $X$} if each $y \in Y$ has a neighborhood $U$ in $X$ such that the pair $(U, \, U \cap Y)$ is diffeomorphic to $(\R^{n-k} \x \R_+^{k}, \ \R^{m-k} \x \{0\} \x \R_+^{k})$ for some $k$ with $0 \leq k \leq \min(m,n)$, where $y\mapsto 0$.   
\end{definition}

So for example, the quarter of the unit 2-sphere in $\R \x \R_+^2$ (with the induced corner structure) is a neat submanifold of $\R \x \R_+^2$, but the ray $x=y$ is not a neat submanifold of $\R_+^2$. 
If $Y$ is neat in $X$, then $Y$ is transverse to each corner of $X$.  The converse fails, for example when points in $\d Y$ are in $\mathrm{int} (X)$.  However, standard transversality techniques can be used to prove a partial converse:

\begin{lemma}
\label{NeatLemma}
Let $X$ be a manifold with corners, and suppose that $Y \subset X$ is locally the intersection of a boundaryless manifold with a Euclidean space with corners.
That is, each $y \in Y$ has a neighborhood $U$ in $X$ such that $(U, U \cap Y)$ is diffeomorphic to $(\R^{n-k} \x \R_+^{k}, (\R^{n-k} \x \R_+^{k}) \cap Z)$ for some submanifold $Z \subset \R^n$.
If $Y$ is transverse to every corner of $X$, then $Y$ is neat in $X$.
\qed
\end{lemma}

We now want to replace a submanifold $Y \subset X$ by the sphere bundle $S(\nu(Y))$ of its normal bundle.  A concrete way to do this is to assume a Riemannian metric on $X$.  If $Y \subset X$ is a neat submanifold with corners, then $Y$ has a well defined normal bundle $\nu(Y)$, and from the metric, a tubular neighborhood $\eta_{\eps}(Y)$ of radius $\eps = \eps(y)$, with $\eta_\eps(Y) \cong \nu(Y)$ \cite{Douady-Nbds-Corners}.  Let $\eta_{\eps/2}(Y)$ be the open sub-neighborhood of radius $\eps/2$.   

\begin{definition} \
\begin{itemize}
\item
Define the \emph{blowup of $X$ along $Y$} as $\mathrm{Bl}(X,Y) := X - \eta_{\eps/2}(Y)$.  
When $X$ is clear, we abbreviate this as the \emph{blowup of $Y$}.  We say that we \emph{blow up $Y$} when we construct $\mathrm{Bl}(X,Y)$ from $X$.
\item 
Define the \emph{blow-down} map $\mathrm{Bl}(X,Y) \to X$ as follows.  On $X - \overline{\eta_{\eps/2}(Y)}$, it is given by a diffeomorphism to $X - Y$, using $\eta_\eps(Y) \cong \nu(Y)$ and a diffeomorphism $(\eps/2, \eps) \to (0, \eps)$.  Elsewhere, it is given by the composition $\mathrm{Bl}(X,Y) \cap \overline{\eta_{\eps/2}(Y)} \cong S(\nu(Y)) \to Y$. 
\end{itemize}
\end{definition}

We would like to blow up the diagonal $x_i=x_j$ in $M^{V(\Gamma)}$ for every pair $i \neq j$ of distinct endpoints of an edge or arc in $\Gamma$.  This is not possible unless we blow up diagonals of lower dimension.  For example, suppose we wanted to blow up $\R^3$ along the planes $P_1:=\{x=y\}$, $P_2:=\{y=z\}$, and $P_3:=\{x=z\}$.  The blowup along all three planes  is ill defined because $P_i$ is not transverse to $P_j \cap P_k$.
Thus after blowing up say $P_j$ and $P_k$, the image of $P_i$ is not a neat submanifold.
However, if we first blow up the line $L:=\{x=y=z\}$, we may then blow up the (images of the) three planes in any order.  (On the other hand, if we wanted to blow up only two of these planes, we could do so without blowing up $L$ first.)

One solution is to blow up all the diagonals in $M^{V(\Gamma)}$ (in an inclusion-preserving order).  
This gives the \emph{canonical} Axelrod--Singer compactification\footnote{This is also called the Fulton--Macpherson compactification, though the latter term is sometimes reserved for projectivized blow-ups over $\mathbb{C}$.}, used by Bott and Taubes.  This compactification has local coordinates that make it a manifold with corners \cite{Axelrod-Singer}.
One can however blow up fewer diagonals and still blow up all the diagonals corresponding to edges of $\Gamma$.
This was the approach of Kuperberg and Thurston, which we follow.  
This smaller compactification is crucial for the hidden face involutions in Section \ref{HiddenFaceInvolution}. Analytically, one can argue the vanishing of the contributions from these faces in the Axelrod--Singer compactification by first integrating over some of the coordinates, but topologically, the involutions of the whole configuration space are not well defined if one blows up every diagonal.  Cf.~\cite[Section 5.1.2]{KuperbergThurston}.
Using the smaller compactification also eliminates the need to argue the vanishing of integrals along certain boundary faces.  Namely, the analogues of Lemmas A.7 and A.8 in \cite{CCRL-AGT} are not needed because we will perform blow-ups for only biconnected subgraphs.  We now return to a general sufficient condition for blowups to be well defined.

\begin{definition}
For a manifold with corners $X$, call a set of submanifolds with corners $Y_1, \dots, Y_k$ \emph{mutually transverse} if any $Y_i$ is transverse (in $X$) to the intersection of any collection of the other $Y_j$ with any corner of $X$.
\end{definition}

By Lemma \ref{NeatLemma}, if the $Y_i \subset X$ are locally intersections of boundaryless manifolds with Euclidean spaces with corners, then mutual transversality implies neatness.
If the $Y_i$ are linear subspaces of $X=\R^d$, then they are mutually transverse if and only if each $Y_i$ is transverse to the intersection of \emph{all} the other $Y_j$.  An equivalent condition is that $\codim \bigcap_{i=1}^k Y_i = \sum_{i=1}^k \codim Y_i$.  
The appearance of the corners of $X$ in the definition means that for example, the rays $L:=\{x=0\}$ and $K:=\{x=y\}$ are \emph{not} mutually transverse in the upper half-plane $\R \x \R_+$, even though each one is neat and $K \pitchfork L$.
We do not guarantee that our definition agrees with others authors' definitions in the case of nonempty boundary.

\begin{lemma}
\label{MutualTransLemma}
Suppose that $Y_1, \dots, Y_k \subset X$ are a collection of mutually transverse neat submanifolds with corners.  Then one can blow up all of the $Y_i$, in any order.
\end{lemma}

\begin{proof}
The blowup of any one $Y_i$ is well defined by neatness.  So assume that some $Y_i$ have already been blown up to form $X'$, and that we want to blow up the submanifold corresponding to $Y$ in $X'$.  
More precisely, let $Y'$ be the preimage of $Y$ under the blow-down map $X' \to X$.
It suffices to check that $Y'$ is neat in $X'$.  
Now $X'$ is obtained by removing neighborhoods from $X$, and $Y$ is neat in $X$, so $Y'$ is locally the intersection of a manifold with a Euclidean space with corners.  By Lemma \ref{NeatLemma}, it suffices to check that $Y'$ intersects each corner in $X'$ transversely.  
A corner in $X'$ corresponds to the intersection of a corner $K$ in $X$ with some  $Y_{i_1}, \dots, Y_{i_k}$ (or more precisely, their preimages in $X'$).  
Mutual transversality ensures that $Y \pitchfork (K \cap Y_{i_1}\cap \dots\cap Y_{i_k})$, as desired.  Mutual transversality, and hence our ability to blow up the $Y_i$, does not depend on the order of $Y_i$.
\end{proof}

In our application, we will blow up a collection of submanifolds that are not all mutually transverse, but in which mutual transversality is satisfied by every subcollection that is minimal with respect to inclusion.  In effect, we will repeatedly use Lemma \ref{MutualTransLemma}.

\subsection{A minimal compactification}
\label{CompactificationDef}
The goal of this subsection is to define compact configuration spaces that depend on a graph $\Gamma$, namely $C_\Gamma[M]$ for compact (and boundaryless) $M$ in Definition \ref{CptfnDefn}(a), and $C_\Gamma[\R^d]$ in Definition \ref{CptfnDefnForRd}.  The main point is Proposition \ref{DiagsTransverse}, which says that the construction is well defined.

For a subgraph $\Gamma' \subset \Gamma$, let $\Delta_{\Gamma'}$ denote the subset of $M^{V(\Gamma)}$ where all the vertices of $\Gamma'$ have collided, i.e., 
\[
\Delta_{\Gamma'}:=\{ f \in M^{V(\Gamma)} : f(x_i) = f(x_j) \ \forall i, j \in V(\Gamma')\}.
\]
We will blow up $X = M^{V(\Gamma)}$ along the diagonal $\Delta_{\Gamma'}$ for every biconnected $\Gamma' \subset \Gamma$.  
Recall that we consider both edges and arcs in our notion of biconnectedness (Definition \ref{BiconnectedBlocks}). 
We will perform these blowups in the following order:
\begin{itemize}
\item First, we blow up  (in any order) all diagonals which are minimal with respect to inclusion among  $\Delta_{\Gamma'}$ with $\Gamma'$ biconnected.
Such a diagonal $\Delta_{\Gamma'}$ is minimal precisely if the subgraph $\Gamma'$ is maximal, i.e.~a block of $\Gamma$.   
\item 
Then we blow up (in any order) the remaining minimal diagonals $\Delta_{\Gamma''}$ with $\Gamma''$ biconnected.
Such a diagonal $\Delta_{\Gamma''}$ corresponds to a maximal subgraph $\Gamma''$ among those properly contained in the subgraphs $\Gamma'$ for which $\Delta_{\Gamma'}$ has already been blown up.
\item 
We continue this process until all the 2-fold diagonals corresponding to edges and arcs of $\Gamma$ are blown up.  
\end{itemize}

\begin{proposition}
\label{DiagsTransverse}
The blowups of diagonals at each stage are well defined.
\end{proposition}

This result will follow quickly from  Lemma \ref{DiagsOfBlocksTransverse} for the minimal diagonals and Lemma \ref{RemainingMinDiags} for the remaining minimal diagonals at each stage.

\begin{lemma}
\label{DiagsOfBlocksTransverse}
The diagonals corresponding to the blocks of $\Gamma$ are mutually transverse.  
\end{lemma}
\begin{proof}
If $M$ were $\R^d$, the diagonals would be linear subspaces, and it would suffice check that the codimension of the intersection is the sum of the codimensions.
In fact, we are considering an arbitrary compact $M$, but this count also suffices because a neighborhood of a diagonal in $M^{V(\Gamma)}$ is modeled by a neighborhood of a diagonal in Euclidean space.
Let $\Gamma_1,\dots,\Gamma_k$ denote the blocks of $\Gamma$, let $v_i = |V(\Gamma_i)|$, $v=|V(\Gamma)|$,  and $c = |CV(\Gamma)|$.  
The intersection of all the $\Delta_{\Gamma_i}$ is $\Delta_\Gamma$, which has codimension $(v-1)d$.  Each $\Delta_{\Gamma_i}$ has codimension $(v_i -1)d$.  So it suffices to check that $\sum_{i=1}^k (v_i -1) = v -1$.  Since $\Upsilon(\Gamma)$ is a tree, $|V(\Upsilon(\Gamma))| - |E(\Upsilon(\Gamma))|=1$.  
Letting $e:=|E(\Upsilon(\Gamma))|$, we rewrite this as $k+c-e=1$, and thus
$$
\sum_{i=1}^k (v_i -1) = 
\sum_{i=1}^k v_i -k =
(v + e - c) - k = 
v - (k+c-e) =
v-1
$$
where Lemma \ref{BlockCutTreeBasics} justifies the second equality.
\end{proof}

\begin{lemma}
\label{RemainingMinDiags}
Suppose some of the $\Delta_{\Gamma'}$ in $M^{V(\Gamma)}$ for biconnected $\Gamma'$ have been blown up in increasing order of inclusion.  That is, suppose that if $\Delta_{\Gamma_i} \subset \Delta_{\Gamma_j}$ and $\Delta_{\Gamma_j}$ has been blown up, then $\Delta_{\Gamma_i}$ has also been blown up.
Then the remaining minimal diagonals are mutually transverse.  
\end{lemma}
\begin{proof}
Let $X=M^{V(\Gamma)}$, and let $X'$ be the blown up manifold.  
For a diagonal $\Delta_{\Gamma_i}$ that has already been blown up, use the same symbol to denote its preimage in $X'$ under the blow-down map $X' \to X$.
Then we must check that 
\begin{equation}
\label{MutualTransEqn}
\Delta_{\Gamma_1} \pitchfork (\Delta_{\Gamma_2} \cap \dots \cap \Delta_{\Gamma_k})
\end{equation}
where $\Delta_{\Gamma_1}$ is a remaining minimal diagonal, and where for each $j=2,\dots,k$, $\Delta_{\Gamma_j}$ is either another remaining minimal diagonal or a diagonal that has already been blown up.  
As our first step, we consider the case where each $\Delta_{\Gamma_j}$ is a remaining minimal diagonal.  
We rule out some easy special cases:
\begin{itemize}
\item
If for some $i$ and $j$, $|V(\Gamma_i \cap \Gamma_j)| \geq 2$, then $\Gamma' :=\Gamma_i \cup \Gamma_j$ is biconnected, so $\Delta_{\Gamma_i} \cap \Delta_{\Gamma_j} = \varnothing$ because $\Delta_{\Gamma'}$ has already been blown up.  
\item
If there is a cycle in $\Gamma_1 \cup \dots \cup \Gamma_k$ which has no repeated vertices and which passes through multiple $\Gamma_{i_1}, \dots, \Gamma_{i_j}$, $j>1$, then $\Gamma':=\Gamma_{i_1} \cup \dots \cup \Gamma_{i_j}$ is biconnected, so $\Delta_{\Gamma'}$ has been blown up, and $\Delta_{\Gamma_{i_1}} \cap \dots \cap \Delta_{\Gamma_{i_j}} = \varnothing$.
\end{itemize}
Thus we may suppose $|V(\Gamma_i \cap \Gamma_j)| \leq 1$ for all $i,j$.  Also, if we form a graph $\mathcal{F}$ analogous to the block-cut tree $\Upsilon(\Gamma)$, in which a vertex is labeled either by some $\Gamma_i$ or a vertex shared by multiple $\Gamma_i$, then we may assume $\mathcal{F}$ is a forest.
As in Lemma  \ref{DiagsOfBlocksTransverse}, it suffices to check that the codimension of the intersection is the sum of the codimensions, since transverse submanifolds remain transverse after a blow-up of another submanifold.  The desired equality is $\sum_{i=1}^k (v_i - 1) = v - f$, where $f$ is the number of components of $\mathcal{F}$.  Lemma \ref{BlockCutTreeBasics} applies to $\mathcal{F}$, thus yielding this equality and establishing \eqref{MutualTransEqn} for the case where all the $\Delta_{\Gamma_j}$ are remaining minimal diagonals.  

As our second step, we consider the case where $\Delta_{\Gamma_j} \subset \Delta_{\Gamma_1}$ for all $j>1$, so these $\Delta_{\Gamma_j}$ have all been blown up.  
Let $p \in \bigcap_{j=1}^k \Delta_{\Gamma_j}$.  
Then any direction in $T_pX$ normal to $T_p \Delta_{\Gamma_1}$ is a fortiori normal to $T_p \Delta_{\Gamma_j}$, $j>1$. 
Hence such a direction is tangent to the corner indexed by $\Delta_{\Gamma_2}, \dots, \Delta_{\Gamma_k}$ (since there is a direction complementary to $T_p(\d X')$ contained in $T_p \Delta_{\Gamma_1}$).  Hence we obtain the desired transversality \eqref{MutualTransEqn} in this case.

Finally, for the general case, let $\Delta'$ be the intersection of those $\Delta_{\Gamma_j}$ contained in $\Delta_{\Gamma_1}$, and let $\Delta''$ be the intersection of the remaining $\Delta_{\Gamma_j}$.  
Let $p \in \bigcap_{j=1}^k \Delta_{\Gamma_j}$.
We can obtain $\Delta_{\Gamma_1} \pitchfork \Delta''$ by reducing to the case of diagonals minimal among those yielding $\Delta''$ and applying the first step.  Thus any direction complementary to $T_p\Delta_{\Gamma_1}$ is in $T_p\Delta''$.  But as in the second step, such a direction also lies in $T_p \Delta'$ (where $\Delta'$ corresponds to diagonals that have already been blown up).  
\end{proof}

\begin{proof}[Proof of Proposition \ref{DiagsTransverse}]
By Lemma \ref{MutualTransLemma}, we just need mutual transversality and neatness of the diagonals considered at each stage (or more accurately, their preimages under the appropriate blow-down maps).  We have mutual transversality from Lemmas \ref{DiagsOfBlocksTransverse} and \ref{RemainingMinDiags}. Each diagonal $\Delta_{\Gamma'}$ is a boundaryless submanifold of $X$, so in each successive blowup, it is locally the intersection of a boundaryless manifold with a Euclidean space with corners.  Thus by Lemma \ref{NeatLemma}, the mutual transversality imply neatness.  
\end{proof}

We may now make the following definition:
\begin{definitions} 
\label{CptfnDefn}
Let $M$ be a compact manifold.
\begin{enumerate}
\item[(a)]
Define $C_\Gamma[M]$ as the result of blowing up $M^{V(\Gamma)}$ along $\Delta_{\Gamma'}$ for every biconnected $\Gamma' \subseteq \Gamma$, in increasing order of inclusion (i.e., starting with the lowest-dimensional diagonals, or the diagonals with the collisions of the most points).  Let $C_\Gamma(M)$ denote the interior of $C_\Gamma[M]$.
\item[(b)]
Define $C_n[M]$ as $C_{K_n}[M]$, where $K_n$ is the complete graph on $n$ vertices (with $L=\varnothing$).  
\end{enumerate}
\end{definitions}

\subsubsection{The case where $M$ is Euclidean space.}
\label{SuspendingGraphs}
For the case $M=\R^d$, which we are primarily interested in, we will also record directions of approach of any point in $\Gamma$ to infinity.
\begin{definition}
\label{GraphSuspension}
Let $\Gamma$ be a graph on $L$.  
Then $\Sigma \Gamma$, the \emph{suspension} of $\Gamma$, is a graph on a singular 1-manifold $L'$ defined as follows:  
\begin{itemize}
\item
The space $L'$ is the one-point compactification of the union of non-compact components of $L$, if $L$ has such components; otherwise $L'=L$.  
\item
The vertices of $\Sigma\Gamma$ are the vertices of $\Gamma$, plus one extra vertex $\infty$.  If $L$ has non-compact components, this vertex coincides with the extra point in the one-point compactification, and it is considered a segment vertex.  Otherwise, $\infty$ is considered a free vertex.  
\item
The edges of $\Sigma\Gamma$ are those of $\Gamma$, plus an edge between $\infty$ and every free vertex of $\Gamma$.  
\item
The arcs of $\Sigma\Gamma$ are those of $\Gamma$, plus an arc incident to $\infty$ for every $[v, \infty) \in \R$ and every $(-\infty, v] \in \R$, where $v$ is a segment vertex in $\Gamma$ and the half-open ray contains no other segment vertices.
\end{itemize}
Note that the vertex $\infty$ may not satisfy the valence conditions in Definition \ref{GraphsDefinition}.
\end{definition}

\begin{definition}
\label{CptfnDefnForRd}
View $S^d$ as the one-point compactification of $\R^d$.  
Define $C_\Gamma[\R^d]$ as the subset of $C_{\Sigma \Gamma}[S^d]$ where the extra vertex is fixed at $\infty \in S^d$.  
\end{definition}
So $C_\Gamma[\R^d]$ consists of those points in $C_{\Sigma \Gamma}[S^d]$ which blow down to $(x_1,\dots,x_{|V(\Gamma)|}, \infty) \in (S^d)^{V(\Sigma \Gamma)}$ for some $x_i \in S^d$.  
A biconnected subgraph of $\Sigma \Gamma$ is either a biconnected subgraph of $\Gamma$ or the suspension of a connected subgraph of $\Gamma$ by the extra vertex $\infty$.  (However, the converse is not quite true: for connected $\Gamma' \subset \Gamma$ with segment vertices, the corresponding subgraph of $\Sigma \Gamma$ need not be biconnected.)  
Thus we have defined $C_\Gamma[M]$ when $M$ is either compact or Euclidean space.

\bigskip

Now for $M$ either a compact manifold or $\R^d$, the space $C_\Gamma[M]$ is a manifold with corners, which one can thus integrate over.  There is a blow-down map
\[
C_\Gamma[M] \to M^{V(\Gamma)}.
\]
It sends an interior point in $C_\Gamma(M)$ into the complement of the diagonals.  
We will see in Section \ref{CornerDescription} that a corner is indexed by a list of subgraphs $\{\Gamma_1, \dots, \Gamma_k\}$.
Roughly the blow-down map collapses $\Gamma_1 \cup \dots \cup \Gamma_k$ to its components.  More precisely, a point in such a corner maps into the diagonal where $x_i = x_j$ for every pair of vertices $(i,j)$ which lie in the same connected component of $\Gamma_1 \cup \dots \cup \Gamma_k$.

\begin{remark}[Slight abuse of terminology]
Since we didn't blow up every diagonal in $M^{V(\Gamma)}$ to get $C_\Gamma[M]$, a point in the interior of $C_\Gamma[M]$ is a tuple of points in $M$ that need not be pairwise distinct.  Nonetheless, we will sometimes refer to such a tuple as a \emph{configuration}.
\end{remark}

\begin{remarks}[Relationships to other compactifications] \
\begin{enumerate}
\item 
The space $C_n[M]$ is precisely the Axelrod--Singer compactification, used by Bott and Taubes in \cite{BottTaubes} and Cattaneo et al in \cite{CCRL-AGT}.  In $C_n[M]$, a collision of any two points is accompanied by the datum of a direction of collision, and a collision of any three points is further accompanied by the datum of a relative rate of approach $(|x_j - x_i| / |x_k - x_j|)$.  See also the work of Sinha \cite{SinhaCptn}.
\item  In our work with Munson and Voli\'c, we also considered spaces obtained by blowing up only some of the diagonals in the cartesian product \cite[Section 4.2.4]{KMV}.  That construction is however somewhat different from the present one.  The construction in \cite{KMV} involves altering $\Gamma$ into a \emph{hybrid} with \emph{graft components} for the purpose of working with homotopy links (i.e. link maps) rather than links (i.e. embeddings).  Here we work only with embeddings, so the hybrid is not needed.  Also, in \cite{KMV}, every diagonal in each ``graft component'' is blown up, whereas we blow up even fewer diagonals here.  
\item 
The compactification used by Poirier \cite{PoirierAGT} appears to be closely related to the Kuperberg--Thurston compactification that we will use below.  It is defined differently from the latter compactification but seems to share the feature of blowing up only those 2-fold diagonals corresponding to edges in $\Gamma$.  
\item
The canonical compactification $C_{|V(\Gamma)|}[\R^d]$ can be obtained from $C_\Gamma[\R^d]$ by merely blowing up (in increasing order of dimension) the images of all the diagonals which have not already been blown up.  There is then a {blow-down} map $C_{|V(\Gamma)|}[\R^d] \to C_\Gamma[\R^d]$.
\end{enumerate}
\end{remarks}

\subsection{The corners of the compactified configuration spaces}
\label{CornerDescription}
We review and elaborate on some details regarding the corner structure from Section 4.3 of \cite{KuperbergThurston}.  These details will help us verify that various gluings and foldings of codimension-1 faces extend to their corners.  They will also be useful in the degeneracy arguments, where we consider the images of various faces under the spherical maps, in Lemma \ref{PositiveCodimension}. 

Let $M$ be a $d$-dimensional manifold without boundary.  The corners of codimension $k$ in $C_\Gamma[M]$ are indexed by sets of subgraphs $S=\{\Gamma_1,\dots,\Gamma_k\}$ of $\Gamma$, where each $\Gamma_i$ is a biconnected graph on at least two vertices, and where $S$ satisfies certain conditions, described below.  
Such a set $S$ indexes a corner which is the intersection of the closures of all of the codimension-1 faces indexed by $\{\Gamma_1\},\dots,\{\Gamma_k\}$.
Different $\Gamma_i$ may correspond to different scales, and the conditions on $S$ can be checked at each scale, much like we repeatedly used Lemma \ref{RemainingMinDiags} at various scales to ensure that the blowups are well defined.  Let  $\Gamma':=\Gamma_1 \cup \dots \cup \Gamma_k$.
\begin{enumerate}
\item
At the most macroscopic scale are the $\Gamma_i \in S$ which are blocks of $\Gamma'$.  These subgraphs thus form the block-cut forest of $\Gamma'$.  The datum at this scale is a configuration in $M$.  
The points are indexed by the vertices in the quotient of $\Gamma$ by collapsing every tree in the block-cut forest of $\Gamma'$ to a point.
\item
At the next level are certain subgraphs in $S$ contained in a biconnected component of $\Gamma'$, say $\Gamma_i$.  The union of these subgraphs $\Gamma_i':=\Gamma_{i_1} \cup  \dots \cup \Gamma_{i_\ell}$ must be a proper subgraph of $\Gamma_i$, and each $\Gamma_{i_j}$ must be a block of $\Gamma_i'$.  
The datum at this stage is a configuration of points in $\R^d (=T_pM)$, modulo translation and positive scaling, which may be thought of as an infinitesimal configuration.
The configuration is indexed by vertices in the quotient of $\Gamma_i$ obtained by collapsing every tree in the block-cut forest of $\Gamma_i'$ to a point.
We call this quotient of configuration space the \emph{screen space for $\Gamma_i$} and a point in it a \emph{screen}.  We allow the possibility that $\Gamma_i' = \varnothing$.
\item
A subgraph $\Gamma_{i_j}$ may then contain further subgraphs, and in general step (2) may be repeated an arbitrary finite number of times.  
\end{enumerate}
One can check whether a set $S$ indexes a (nonempty) corner by discarding the $\Gamma_i$ maximal in $\bigcup \Gamma_i$, then checking the conditions in step (2) for the remaining maximal subgraphs, then discarding those subgraphs and checking (2) for the remaining maximal subgraphs, and so on.  
The next statement gives a necessary condition for $S$ to index a corner.
It is not quite sufficient because of the maximality condition described above: for example, the three edges of a triangle satisfy the hypotheses below, but they do not index a corner.

\begin{proposition}
\label{NecessaryConditionCorner}
If $S = \{\Gamma_1, \dots, \Gamma_k\}$ indexes a corner, then the $\Gamma_i$ are biconnected subgraphs such that each pair $\Gamma_i, \Gamma_j$ is either disjoint, nested, or intersects in a single vertex.
\end{proposition}
\begin{proof}
Any $\Gamma_i$ is a block of a graph contained in some other $\Gamma_j$, so this follows from Lemma \ref{IntersectionOfBlocks}.
\end{proof}

Ultimately, a point in the corner indexed by $S$ is given by a configuration of points in $M$ plus one screen for each $\Gamma_i$ in $S$.  
Faces indexed by $S$ and $S'$ intersect precisely when the union $S \cup S'$ satisfies the above conditions.  In that case, $S\cup S'$ is the indexing set of the intersection.  

\begin{notation}
\label{StratumNotation}
A stratum (i.e.~corner) of $C_\Gamma[M]$ indexed by a set $S$ of subgraphs in $\Gamma$ will be denoted $\SS(\Gamma, S)$.  If $S$ consists of a single subgraph $\Gamma'$ or a single edge $e$, we write $\SS(\Gamma, \Gamma')$ and $\SS(\Gamma, e)$ respectively.  For $M=\R^d$, each corner is indexed by a set $S$ of subgraphs of $\Sigma \Gamma$, but we will use the same notation $\SS(\Gamma, S)$ for such a corner.
\end{notation}

\begin{example}
The 6 circled subgraphs shown in Figure \ref{CornerExFig1} correspond to a corner of codimension 6.  
At the largest scale, the vertices shown below correspond to 2 configuration points in $M$, one for the edge labeled $h$ and one for the remaining vertices.  At the next scale, we have a screen with 2 points for edge $h$, a screen with 3 points for the triangle, and a screen with 3 points from collapsing all the circled subgraphs in the double-square.  At the smallest scale, we have three screens with 2 points, one for each circled edge in the double-square.  (The screens for the edges $e$ and $f$ are independent, but this poses no problems since we do not blow up every diagonal.)  
The reader may verify that the resulting dimension of this corner is such that its codimension is indeed 6.  

Figure \ref{CornerExFig2}, where an extra subgraph is added, represents a corner of codimension 7.  Compared to the corner represented by Figure \ref{CornerExFig1}, a screen with 3 points at the intermediate scale is replaced by a screen with 2 points (one for the square and one for edge $g$) and, at a smaller scale, a screen with 2 points (one for edges $e$ and $f$ and one for the remaining vertex in the square).  \qed
\end{example}

\begin{figure}[h!]
\qquad  \raisebox{-8pc}{\includegraphics[scale=0.3]{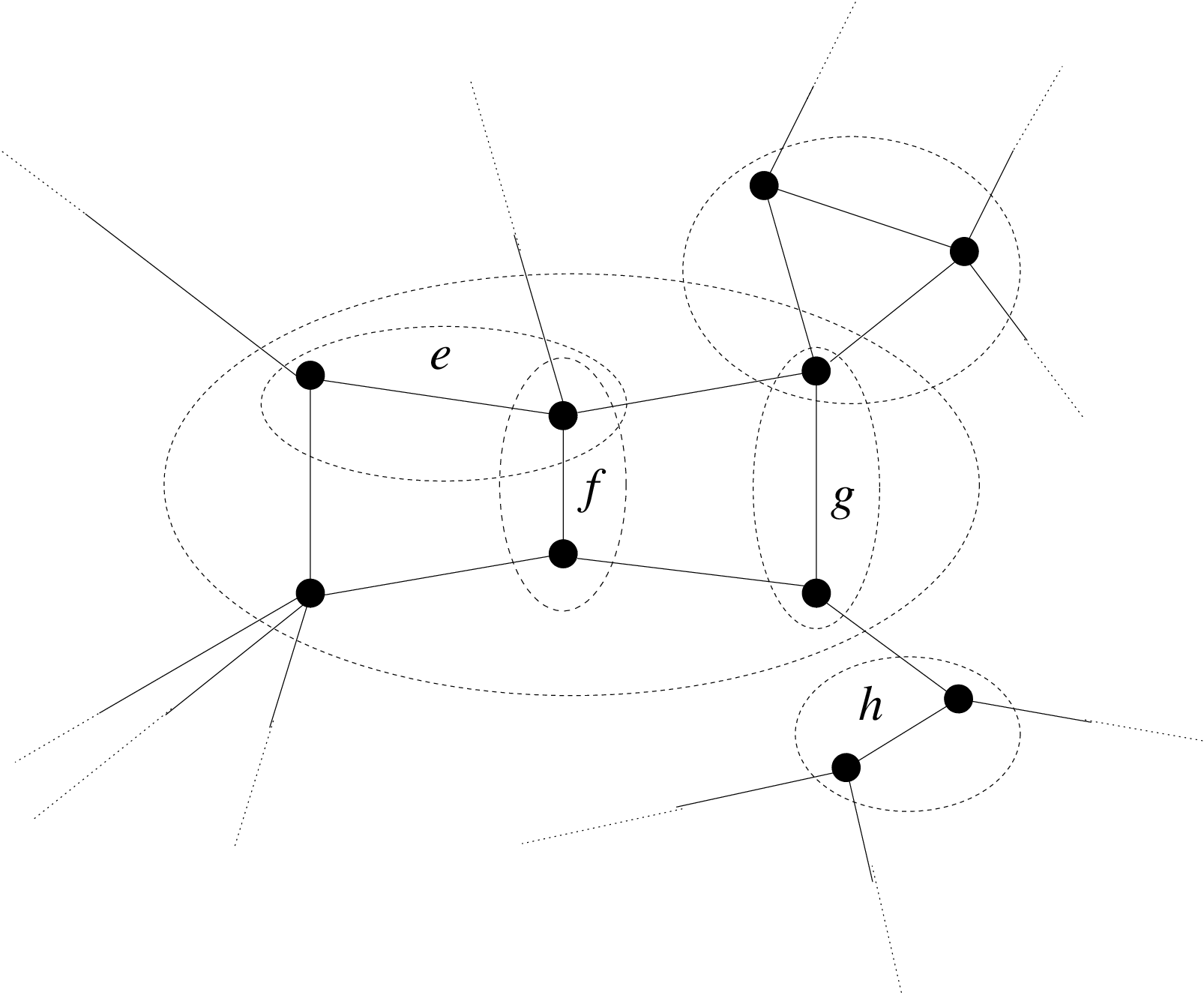}} 
\caption{An example of a set of subgraphs which indexes a corner of codimension 6.}
\label{CornerExFig1}
\end{figure}
\begin{figure}[h!]
\qquad \raisebox{-8pc}{\includegraphics[scale=0.3]{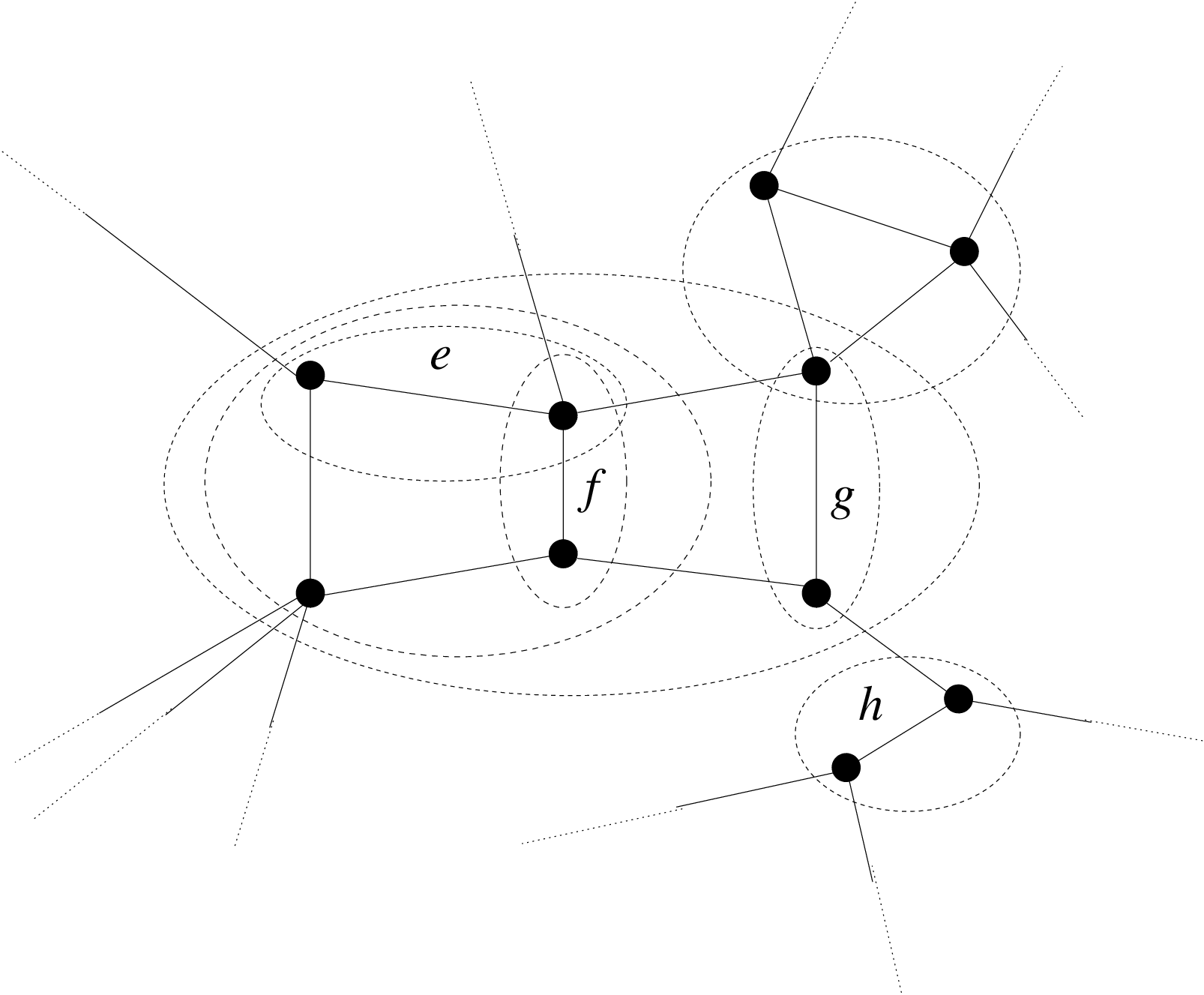}}
\caption{An example of a set of subgraphs which indexes a corner of codimension 7.}
\label{CornerExFig2}
\end{figure}

\newpage
We partition the codimension-1 faces of $C_\Gamma[M]$ into three types, using standard terminology.  Each type of codimension-1 face corresponds to a certain type of subgraph of $\Gamma$.
\begin{definition}
\label{TypesOfFaces} \
\begin{itemize}
\item
A \emph{principal face} is a face $\SS(\Gamma, e)$ obtained by blowing up the 2-fold diagonal $\Delta_e$ in $M^{V(\Gamma)}$.  In the case of $M=\R^d$, where $C_\Gamma[\R^d] \subset C_{\Sigma \Gamma}[S^d]$, the principal faces are those obtained by blowing up any diagonal involving 2 of the first $|V(\Gamma)|$ points.  Such a face corresponds to an edge $e$ of $\Gamma$.
\item
For $M=\R^d$, a \emph{face at infinity} is a face $\SS(\Gamma, \Gamma')$ where one or more points have collided with $\infty \in S^d$.  Such a face corresponds to a biconnected subgraph $\Gamma' \subset \Sigma \Gamma$ containing the vertex $\infty$ (whose removal gives a connected subgraph of $\Gamma$).
\item
The remaining codimension-1 faces are called \emph{hidden faces}.  These are faces $\SS(\Gamma, \Gamma')$ involving a collision of more than two points, none of which is $\infty$, if $M=\R^d$.  Such a face corresponds to a biconnected subgraph $V(\Gamma')\geq 3$ and $\Gamma' \subset \Gamma$.
\end{itemize}
\end{definition}

Sometimes the term ``hidden face'' is used to describe any face that is not a principal face, in which case faces at infinity are considered to be a special type of hidden face.

\section{Bundles over spaces of links and resulting real cohomology classes}
\label{Section4}
We now use the compactification from Section \ref{Section3} to construct bundles over spaces of links (Section \ref{BundleOverLinkSpace}) and pull back spherical cohomology classes (Section \ref{SphericalMaps}).  
We then fiberwise integrate differential forms (Section \ref{FiberwiseIntegrals}) and recall why this produces a cohomology class out of any graph cocycle (Section \ref{VanishingAndGraphCohomology}).  
The purpose of this detour into de Rham cohomology (which is not used in our main construction) is to connect our main construction to previous results.  
In particular, we need the nontriviality of the resulting cohomology classes (Theorem \ref{FTFTI-Analogue}).
This Section will be mostly familiar material to readers acquainted with for example the work of Bott and Taubes \cite{BottTaubes}or Cattaneo et al.~\cite{CCRL-AGT} or Munson, Volic, and the author \cite{KMV}. 
Our choice of the Kuperberg--Thurston compactification from Section \ref{Section3} (which is ``smaller'' than the canonical Axelrod--Singer compactification) is the only new twist, but 
Lemma \ref{IntegralsAreTheSame} ensures that this causes no difficulties.

\subsection{A fiber bundle over the space of links}
\label{BundleOverLinkSpace}
We now build a fiber bundle over the space of links whose fibers are closely related to the compactified configuration spaces $C_\Gamma[\R^d]$ described above.  
The bundle will depend on the full data in the graph $\Gamma$ on $L$, 
unlike the spaces $C_\Gamma[\R^d]$ which depend only on $T(\Gamma)$.  The orientation on $\Gamma$ will be used in defining integrals over these bundles.
We first need a suitable compactified configuration space of points in the 1-manifold $L$.

\begin{definitions} 
\label{SegmentConfigsDefn}
Suppose $L \neq \varnothing$, and let $\Gamma$ be a graph on the 1-manifold $L$.
\begin{itemize}
\item
Define $s(\Gamma)$ as the {s}ubdiagram of $\Gamma$ consisting of all the {s}egment vertices and all edges (chords) and arcs between them.  
\end{itemize}
Let $\Map_{s(\Gamma)}[L]$ be the space of maps $V(s(\Gamma)) \to L$ which respect the components that the vertices lie on and the order of the vertices on these components.  (The order is determined by the orientation on $L$.)  Let $C^o_{s(\Gamma)}[L]$ be the result of blowing up in $\Map_{s(\Gamma)}[L]$ the diagonal for every biconnected subgraph of $s(\Gamma)$, in increasing order of inclusion as in Definition \ref{CptfnDefn}.  
This is a configuration space that is compactified near every collision, except for collisions at infinity.  
Finally fix any smooth embedding $e$ of $L$ into some $\R^d$ for $d\geq 3$, such that $e$ is affine-linear outside a compact subset and such that the directions of approach to infinity of the $2m$ rays of $L$ are pairwise distinct.  
\begin{itemize}
\item
Define $C_{s(\Gamma)}[L]$ as the closure of the image of the map $C^o_{s(\Gamma)}[L] \to C_{s(\Gamma)}[\R^d]$ induced by $e$.
\end{itemize}
\qed
\end{definitions}

A key feature of the compact space $C_{s(\Gamma)}[L]$ is that it not only includes limit points at infinity, but also records relative rates of approach of points to infinity.
Any two choices for $e$ will yield diffeomorphic spaces $C_{s(\Gamma)}[L]$.  This space is very similar to one we defined and elaborated on in \cite{KMV}.  If $L$ is closed, then $C_{s(\Gamma)}[L]$ agrees with the result of substituting $L$ for $M$ and $s(\Gamma)$ for $\Gamma$ in Definition \ref{CptfnDefn}.  Putting $L=\R$, the space $C_{s(\Gamma)}[\R]$ agrees with the result of substituting $s(\Gamma)$ for $\Gamma$ in Definition \ref{CptfnDefnForRd} with $d=1$.

Now consider the pullback
\begin{equation}
\label{GammaBTsquare}
\xymatrix{
X[\Gamma] \ar[r] \ar[d] & C_\Gamma[\R^d]  \ar[d] \\
\L^d_m \x  C_{s(\Gamma)}[L] \ar[r] & C_{s(\Gamma)}[\R^d].
}
\end{equation}

We claim that there is a well defined projection map $C_\Gamma[\R^d]  \to C_{s(\Gamma)}[\R^d]$ as shown above.  In fact, a biconnected subgraph of $\Sigma(s(\Gamma))$ is a biconnected subgraph of $\Sigma \Gamma$.
Thus under the projection of the cartesian products $(S^d)^{V(\Sigma\Gamma)} \to (S^d)^{V(\Sigma(s(\Gamma)))}$, the preimage of every diagonal that is blown up to construct $C_{s(\Gamma)}[\R^d]$ is blown up in constructing $C_\Gamma[\R^d]$, justifying the right-hand vertical map.  
%(General fact about blowups to cite here?)
The lower horizontal map comes from the fact that an embedding $(f: L \incl \R^d) \in \L^d_m$ induces for any $\Gamma$ a map of compactifications $C_{s(\Gamma)}[L] \incl C_{s(\Gamma)}[\R^d]$.

For each $\Gamma$, we will consider the bundle $p: X[\Gamma] \to \L^d_m$, where $p$ is given by the left-hand vertical map above followed by the projection to $\L^d_m$.  
Let $F[\Gamma]$ denote the fiber of this bundle, say over a link $f:L \to \R^d$.  Its dimension is lower than that of $C_\Gamma[\R^d]$, since some points are constrained to lie in the image of $f$.  Nonetheless its corner structure is the same as that of $C_\Gamma[\R^d]$.  That is, the faces of $F[\Gamma]$ are in one-to-one correspondence with those of $C_\Gamma[\R^d]$, and this correspondence preserves codimension. Thus $F[\Gamma]$ (or $X[\Gamma]$) has principal faces, faces at infinity, and hidden faces corresponding precisely to those of $C_\Gamma[\R^d]$.  

For each graph $\Gamma$, $X[\Gamma]$ is orientable as a bundle.  Indeed, fixing orientations of $\R$ and $\R^d$ and using the ordering of the segment (respectively all) vertices if $d$ is even (respectively odd) determines a compatible orientation on each fiber.  Thus it is possible to fiberwise integrate forms on this bundle.

\begin{remark}
\label{BottTaubesBundle}
This bundle is similar to the one used by Bott and Taubes and Cattaneo et al.  In fact, a pullback square similar (\ref{GammaBTsquare}) but with $C_\Gamma[\R^d]$, $C_{s(\Gamma)}[L]$, and $C_{s(\Gamma)}[\R^d]$ replaced by  $C_{|V(\Gamma)|}[\R^d]$, $C_{|V_{seg}(\Gamma)|}[L]$, and $C_{|V_{seg}(\Gamma)|}[\R^d]$ respectively produces their bundle.  
Theirs depends only on the numbers of segment vertices (say $q_1+\dots+q_m$) and free vertices (say $t$) rather than all the data in the graph $\Gamma$.
Call the total space of this bundle $X[q_1,\dots,q_m;t]$ 
and call its fiber $F[q_1,\dots,q_m;t]$.\footnote{In our previous work, we called such a pullback $E[q_1,\dots,q_m;t]$.  In this article we use $X$ instead of $E$ to avoid overloading notation.  Other authors use various other notations for this total space.}
\end{remark}

\subsection{Spherical maps}
\label{SphericalMaps}
To define classes in $H^*(X[\Gamma])$, we will use the orientation on $\Gamma$.  We do \emph{not} use the suspension $\Sigma \Gamma$ in defining the maps below.

For every edge in $\Gamma$ with endpoints $i \neq j$, we consider the map $\phi_{ij}$ defined by the composition
\[
\phi_{ij}: X[\Gamma] \to C_{\Gamma}[\R^d] \to S^{d-1}
\] 
where the second map is given on the interior by
\[
(x_1,x_2,\dots)\mapsto \frac{x_j - x_i}{|x_j - x_i|}.
\]  
This map is well defined because we have blown up the 2-fold diagonal $x_i=x_j$ for every edge with endpoints $\{i,j\}$ when constructing $C_\Gamma[\R^d]$.  

For a self-loop on a segment vertex $i$, consider the map
\[
\phi_{ii}: X[\Gamma] \to S^{d-1}
\]
given by the unit tangent vector to $L$ at the point $x_i$.  

If $d$ is odd, define 
\begin{equation}
\label{BigPhi}
\Phi=
\Phi_\Gamma :=\prod \phi_{ij} : X[\Gamma] \to \prod S^{d-1}
\end{equation}
where both products are taken over all edges $(i \to j)$ (including self-loops $(i \to i)$) in $\Gamma$, and where the order of factors in the product is chosen arbitrarily.
If $d$ is even, define $\Phi$ similarly, by arbitrarily choosing either $\phi_{ij}$ or $\phi_{ji}$ for each edge between vertices $i$ and $j$, but ordering the factors according to the order of the edges.
We use the term \emph{spherical map} to refer to $\Phi$ or a constituent $\phi_{ij}$.

\subsection{Fiberwise integrals}
\label{FiberwiseIntegrals}
Although our main construction is in singular cohomology, we now consider integration of forms in order to connect our results to the previously found Bott--Taubes--Vassiliev $\R$-valued cohomology classes.

Let $\omega$ denote an antipodally symmetric unit-volume form on $S^{d-1}$.  For each edge of $\Gamma$ with endpoints $i,j$ let $\theta_{ij} = \phi_{ij}^*\omega \in \Omega_{dR}^{d-1} (X[\Gamma])$.
For brevity, we will sometimes write $\alpha_\Gamma := \prod_{\text{edges in $\Gamma$}} \theta_{ij}$.  
As above, the labeling of $\Gamma$ determines precisely the order of the indices $i,j$ if $d$ is odd and precisely the order of the factors in the product if $d$ is even.  In either case, this uniquely determines $\alpha_\Gamma$, regardless of the remaining arbitrary choices.  

We consider the integral $I_\Gamma$ over the fiber of the bundle $F[\Gamma] \to X[\Gamma] \to \L^d_m$, defined by 
\[
I_\Gamma := \int_{F[\Gamma]} \alpha_\Gamma = \int_{F[\Gamma]}\prod \theta_{ij},
\]
which is a differential form on $\L^d_m$.

First, this integral over our bundle $X_\Gamma$ agrees with the corresponding integral over the slightly different original Bott--Taubes bundle $X[q_1,\dots,q_m;t]$.  (See Remark \ref{BottTaubesBundle} for its definition.)
In fact, one can easily define a form on $X[q_1,\dots,q_m;t]$ similar to the form $\alpha_\Gamma$ on $X[\Gamma]$.  We will use the same notation for both forms.
To compare the fiberwise integrals of $\alpha_\Gamma$ over these two bundles,
recall that $C_{q_1+\dots+q_m+t}[\R^d]$ is obtained by merely blowing up subsets of the boundary of $C_{\Gamma}[\R^d]$.  Thus the integrals of $\alpha_\Gamma$ over $F[q_1,\dots,q_m;t]$ and $F[\Gamma]$ agree on complements of arbitrarily small neighborhoods of the boundary:  

\begin{lemma}
\label{IntegralsAreTheSame}
The integrals 
\begin{align*}
\int_{F[q_1,\dots,q_m;t]} \theta_{i_1 j_1} \cdots \theta_{i_k j_k} & & \mbox{and} & & \int_{F[\Gamma]} \theta_{i_1 j_1} \cdots \theta_{i_k j_k}
\end{align*}
along the fibers of the bundles 
\begin{align*}
F[q_1,\dots,q_m;t] \to X[q_1,\dots,q_m;t] \to \L^d_m & & \mbox{and} & & F[\Gamma] \to X[\Gamma] \to \L^d_m
\end{align*}
agree.
\qed
\end{lemma}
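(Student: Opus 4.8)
The plan is to reduce the claim to a statement about integrating a fixed smooth form over the interiors of the two fibers, exploiting the fact that the compactified configuration space bundles differ only in a neighborhood of their boundaries. First I would observe that the form $\alpha = \theta_{i_1 j_1}\cdots\theta_{i_k j_k}$ is pulled back, via the spherical maps $\phi_{i_\ell j_\ell}$, from a product of spheres, and these maps are defined on all of $C_\Gamma[\R^d]$ (in particular on the interior); moreover, by the construction in Section~\ref{ConfigSpaces} and the Remark following Definition~\ref{CptfnDefnForRd}, the blow-down $C_{|V(\Gamma)|}[\R^d] \to C_\Gamma[\R^d]$ is a diffeomorphism away from a neighborhood of the boundary, and it intertwines the two versions of $\alpha$ on the nose (the spherical maps are compatible with the blow-down). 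Restricting to the fibers over a fixed link $f \in \L^d_m$, the same is true of the blow-down $F[q_1,\dots,q_m;t] \to F[\Gamma]$.

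Next I would note that both fibers have the same interior, namely the open configuration space $C^o$ of $q_1 + \cdots + q_m$ points constrained to lie on $f(L)$ (in the prescribed cyclic/linear order) together with $t$ free points in $\R^d$, with all pairs joined by an edge of $\Gamma$ kept distinct. Both integrands, being pullbacks of a smooth top-degree-minus-$\dim\L^d_m$ form on a product of spheres, are smooth forms on this common interior, and under the identification of interiors they literally coincide. Therefore
\[
\int_{F[q_1,\dots,q_m;t]}\alpha \;=\; \int_{C^o}\alpha \;=\; \int_{F[\Gamma]}\alpha
\]
as forms on $\L^d_m$, provided the integral over the open manifold $C^o$ converges absolutely. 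This convergence is exactly what the compactness of either $F[q_1,\dots,q_m;t]$ or $F[\Gamma]$ guarantees: in each case $\alpha$ extends to a smooth form on a compact manifold with corners, so its integral over the dense interior is finite and equals the integral over the compactification, and fiberwise integration (pushforward along the compact fiber bundle) is well defined. Since $C^o$ has full measure in both compactifications, and the extensions of $\alpha$ restrict to the same form on $C^o$, the two fiberwise integrals agree.

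The main obstacle is purely bookkeeping rather than conceptual: one must check that the two a priori different forms called $\alpha$ — the one on $X[q_1,\dots,q_m;t]$ built from the canonical Axelrod--Singer compactification and the one on $X[\Gamma]$ built from the Kuperberg--Thurston compactification — are pulled back from the same form on $\prod S^{d-1}$ via spherical maps that are genuinely identified by the blow-down map $C_{|V(\Gamma)|}[\R^d] \to C_\Gamma[\R^d]$. This follows because each $\phi_{i_\ell j_\ell}$ is already well defined on the smaller space $C_\Gamma[\R^d]$ (the diagonal $x_i = x_j$ is blown up there since $\{i,j\}$ is an edge of $\Gamma$), and the blow-down commutes with these maps by construction; the self-loop maps $\phi_{ii}$ depend only on the tangent direction to $L$ and are manifestly unchanged. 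Granting this compatibility, the equality of integrals over the common full-measure interior, together with convergence from either compactification, completes the argument.
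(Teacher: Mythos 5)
Your proposal is correct and follows essentially the same route as the paper, which disposes of the lemma in one sentence by noting that $C_{q_1+\cdots+q_m+t}[\R^d]$ is obtained from $C_\Gamma[\R^d]$ by blowing up only subsets of the boundary, so the two integrals agree away from arbitrarily small neighborhoods of the boundary. Your write-up simply makes explicit the supporting points the paper leaves implicit: the common full-measure interior, the compatibility of the spherical maps with the blow-down, and convergence via extension to a compact manifold with corners.
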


Thus we may refer to either integral as $I_\Gamma$.

\subsection{Real cohomology classes from graph cocycles}
\label{VanishingAndGraphCohomology}
We next review the arguments for the construction of nontrivial cohomology classes over $\R$ by integration of forms.  Although we discuss the integrals over $F[\Gamma]$, the arguments used in previous literature for integrals over $F[q_1,\dots,q_m;t]$ apply with minimal modification, as suggested by Lemma \ref{IntegralsAreTheSame}.
We will give a topological reinterpretation of these arguments in Section \ref{GluingSection}, in detail, so our treatment here is brief.  

The forms that are integrated along the fiber are sums of products of the $\theta_{ij}$.
One wants to produce cohomology classes, so one needs to produce closed forms.  The fiber $F=F[\Gamma]$ has nonempty boundary, so by Stokes's Theorem the key is to show that, for certain sums of products of $\theta_{ij}$, the integral along the boundary $\int_{\d F} \alpha_\Gamma |_{\d E}$ is zero.  

Since the corner structure of $F[\Gamma]$ comes from that of  $C_\Gamma[\R^d]$, $F[\Gamma]$ has codimension-1 faces corresponding to those defined in Definition \ref{TypesOfFaces} (though the dimension of some of these faces is lower in $F[\Gamma]$ because some points are constrained to lie on the link).  If we let the link in the base space $\L^d_m$ vary, we can alternately think of these faces as faces of the fiber $F[\Gamma]$ and faces of the total space $X=X[\Gamma]$.  
The arguments for the vanishing along boundary faces differs depending on the type of the face.
\begin{remark}
\label{TypesOfVanishing}
For $d\geq 4$, the vanishing arguments for integrals along these types of faces are respectively as follows:
\begin{itemize}
\item
The integrals over principal faces do not vanish.  Instead, choosing appropriate linear combinations of integrals ensures that these contributions cancel.  We can call this reason for vanishing \emph{cancellation}.
\item  
For certain hidden faces, one argues the vanishing by \emph{symmetry}.  That is, 
the configuration space over which one integrates has an involution which either preserves the form and reverses the orientation, or multiplies the form by $-1$ and preserves the orientation.  Thus the integral is equal to its own negative and must vanish.
\item
The integrals over the remaining hidden faces, as well as faces at infinity, vanish because of \emph{degeneracy}: the image of this face under the map $\Phi$ defined in (\ref{BigPhi}) has positive codimension.  
\end{itemize}
\end{remark}

Thus the problem of constructing closed forms reduces to finding linear combinations of integrals $\sum_i c_i I_{\Gamma_i}$ for which the principal face contributions cancel.  
Via the association $\Gamma \mapsto I_\Gamma$ of configuration space integrals to graphs, one can rephrase this problem in terms of the graph complex $\LD$ defined in Section \ref{GraphComplex}.  
Part (1) of the Theorem below implies that the linear combinations of integrals which are closed forms correspond exactly to those $\sum c_i \Gamma_i \in \LD$ which are cocycles. 

\begin{theorem}[\cite{CCRL-AGT, KMV}]
\label{FTFTI-Analogue} \
\begin{enumerate}
\item
The association $\Gamma \to I_\Gamma$ is a chain map 
\begin{equation}
\label{IntegrationChainMap}
I: \bigoplus_{k,n} \LD^{k,n}
\to \Omega_{dR}^{*}(\L^d_m)
\end{equation}
 from the cochain complex of graphs to the de Rham cochain complex on the space of links $\L^d_m$, provided $d\geq 4$.   It sends a cochain in $\LD^{k,n}$ to a de Rham cochain of degree $n(d-3)+k$.
\item
For defect $k=0$, the integration map $I$ induces an injection in cohomology, producing a nontrivial cohomology class in $H^{n(d-3)}(\L^d_m)$ for each nontrivial cocycle in $\LD^{0,n}$.
\end{enumerate}
\end{theorem}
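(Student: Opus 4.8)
\textbf{Proof proposal for Theorem \ref{FTFTI-Analogue}.}

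The plan is to verify the two statements separately, relying on the vanishing analysis summarized in Remark \ref{TypesOfVanishing} for part (1) and on a degree-counting argument together with known nontriviality results for part (2).

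For part (1), I would first establish the degree formula. A single $\theta_{ij} = \phi_{ij}^*\omega$ is a form of degree $d-1$, and there is one such factor for each edge of $\Gamma$. The fiber $F[\Gamma]$ has dimension equal to $d$ times the number of free vertices plus $1$ times the number of segment vertices (the segment points being constrained to a $1$-manifold), so $\dim F[\Gamma] = d|V_{free}(\Gamma)| + |V_{seg}(\Gamma)|$. Integrating the degree-$(d-1)|E(\Gamma)|$ form $\alpha$ along this fiber yields a form on $\L^d_m$ of degree
\[
(d-1)|E(\Gamma)| - d|V_{free}(\Gamma)| - |V_{seg}(\Gamma)|.
\]
A short rearrangement using the definitions of order $n = |E(\Gamma)| - |V_{free}(\Gamma)|$ and defect $k = 2|E(\Gamma)| - |V_{seg}(\Gamma)| - 3|V_{free}(\Gamma)|$ rewrites this as $n(d-3) + k$, as claimed. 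Next I would show $I$ is a chain map, i.e. $d \circ I = I \circ d$ up to sign. By Stokes' Theorem, $d \int_{F[\Gamma]} \alpha = \pm \int_{\d F[\Gamma]} \alpha|_{\d F[\Gamma]}$ since $\alpha$ is closed (being a product of pullbacks of the volume form). The codimension-$1$ faces of $F[\Gamma]$ are the principal faces, faces at infinity, and hidden faces. By Remark \ref{TypesOfVanishing}, for $d \geq 4$ the integrals along faces at infinity vanish by degeneracy (the spherical map drops rank there), and the integrals along hidden faces vanish by degeneracy or by the symmetry/involution argument; here I would need to check the standard fact that the relevant involutions either fix the form and reverse orientation or negate the form and fix orientation, so that each such integral equals its own negative. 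The surviving boundary contributions come from principal faces, which correspond to contracting a single non-self-loop edge or arc $e$ of $\Gamma$; the integral along the principal face indexed by $e$ is $\pm I_{\Gamma/e}$, with the sign matching $\eps(e)$ from \eqref{OddSigns} or \eqref{EvenSigns} by the careful orientation bookkeeping (this is exactly how the coboundary $d\Gamma = \sum_e \eps(e)\Gamma/e$ was engineered). Lemma \ref{IntegralsAreTheSame} guarantees that nothing changes if we use the original Bott--Taubes bundle instead, so the previously published computations of Cattaneo et al and of Munson--Voli\'c--Koytcheff apply verbatim. Summing over $e$ gives $d(I\Gamma) = \pm I(d\Gamma)$, so $I$ is a chain map.

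For part (2), fix a nontrivial cocycle $\gamma \in \D^0_n$. Since the defect is $0$, part (1) shows $I\gamma$ is a closed form of degree $n(d-3)$ on $\L^d_m$; closedness also follows directly since $d(I\gamma) = \pm I(d\gamma) = 0$. It remains to see that the de Rham class $[I\gamma]$ is nonzero and that distinct cohomology classes in $\D^0_n$ give distinct classes in $H^{n(d-3)}(\L^d_m)$. This is precisely the ``finite-type = configuration space integral'' type result established for knots by Cattaneo--Cotta-Ramusino--Longoni \cite{CCRL-AGT} and extended to links by Munson--Voli\'c and the author \cite{KMV}: one exhibits, for each graph cohomology class, a cycle in $H_{n(d-3)}(\L^d_m)$ on which the integral pairs nontrivially, using the structure of the space of links (for instance cycles built from ``cabling'' or ``resolution'' constructions), and one checks the pairing matrix against a basis of $\D^0_n / \operatorname{im} d$ is nonsingular. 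I would simply cite this, as the theorem statement attributes it to \cite{CCRL-AGT, KMV}; reproving it is outside the scope of the present paper, whose contribution is the integral refinement.

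The main obstacle is the orientation bookkeeping in part (1): matching the sign of the integral along each principal face with the combinatorial sign $\eps(e)$, uniformly in the parity of $d$, and simultaneously confirming that every hidden-face and infinite-face integral genuinely vanishes (the symmetry argument requires identifying the correct involution on each hidden face and tracking its effect on the orientation of the fiber). Since we have chosen the Kuperberg--Thurston compactification rather than the Fulton--MacPherson one, one might worry the face structure differs, but Lemma \ref{IntegralsAreTheSame} together with the corner analysis of Section \ref{CornerDescription} shows the codimension-$1$ faces relevant to Stokes' Theorem are in bijection with those of the standard compactification, so no new cases arise. The degree computation and the chain-map identity away from signs are routine.
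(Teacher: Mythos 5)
Your proposal is correct and follows essentially the same route as the paper, which states this theorem as a cited result of \cite{CCRL-AGT, KMV} and only sketches the mechanism (Stokes' theorem plus the cancellation/degeneracy/symmetry trichotomy of Remark \ref{TypesOfVanishing}, with Lemma \ref{IntegralsAreTheSame} handling the change of compactification). Your degree computation $(d-1)|E(\Gamma)| - d|V_{free}(\Gamma)| - |V_{seg}(\Gamma)| = n(d-3)+k$ checks out, and deferring the sign bookkeeping and the nontriviality in part (2) to the cited references is exactly what the paper does.
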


An analogue of statement (1) was given in \cite{CCRL-AGT} for the case of closed knots $m=1$.  The case of long links was treated in \cite{KMV}.  (The case of closed links can also be covered by all the arguments in \cite{KMV}.)  
Since the coboundary map raises the defect by 1, the space of defect-0 cocycles is the space of defect-0 cohomology classes. 
Part (2) was proven in \cite{CCRL-AGT} for closed knots, and the proof there can be easily generalized to links (long or closed).  

While integration $I$ is not known to be a cochain map for $d=3$, this deficiency can be corrected by adding certain ``anomaly terms,'' which conjecturally vanish.  In that case, the injectivity result holds, and in fact, integration then gives an isomorphism \cite{DThurstonABThesis, VolicBT, KMV} between the space $Z(\LD^{0,n})$ of defect-0, order-$n$ cocycles and the space of all finite-type knot or link invariants of type $n$, studied by Vassiliev \cite{Vassiliev} and others.
The former space is the space of (uni)trivalent graph cocycles, and this isomorphism is sometimes called the Fundamental Theorem of Finite-Type Invariants.  
Theorem \ref{FTFTI-Analogue} is thus an analogue of that theorem on knot and link invariants to cohomology classes of higher degree.
Moreover, work of Bar-Natan \cite{BarNatanTopology} implies that the cohomology of defect-0 graphs $Z(\LD^{0,*})$ has many nontrivial elements.  Thus Theorem \ref{FTFTI-Analogue} gives the existence of many nontrivial cohomology classes in embedding spaces.

\begin{example}
\label{Type2Example}
The simplest nontrivial example for odd $d$, illustrates the above Theorem.  
Consider $L=\R$, and define $\gamma_2^{\mathrm{odd}} =\mathrm{X} -\mathrm{Y}$ by
\begin{equation}
\label{Type2Graphs}
\gamma_2^{\mathrm{odd}} := 
%\frac{1}{4}\>\>\>\>\>\raisebox{-0.5\height}{\includegraphics[scale=0.2]{Xdiagramlabeld.eps}}\>\>\>\>\>
%- \>\>\>\>\>
%\frac{1}{3}\>\>\>\>\>\raisebox{-0.56\height}{\includegraphics[scale=0.2]{Ydiagramlabeld.eps}}
\raisebox{-1pc}{\includegraphics[scale=0.3]{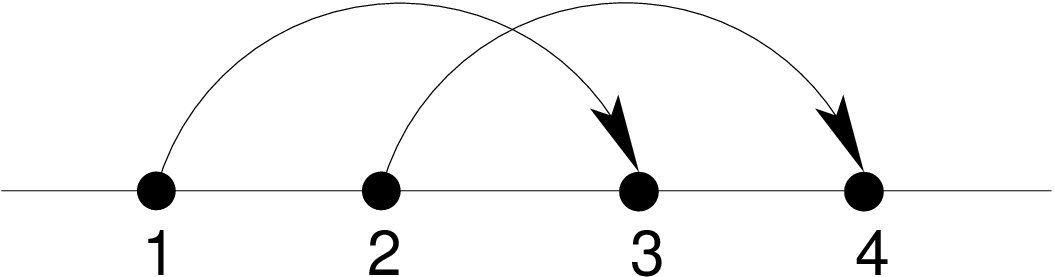}}\qquad - \qquad \raisebox{-1pc}{\includegraphics[scale=0.3]{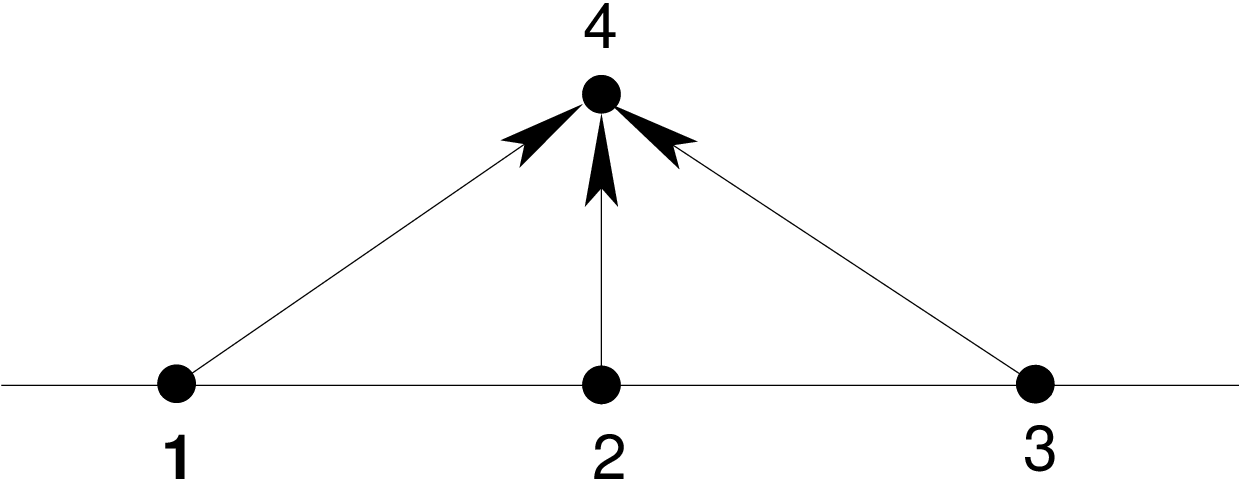}}
\end{equation}
Bott and Taubes considered a closed knot analogue of this cocycle (i.e.~with $S^1$ instead of $\R$), where the coefficients are related to symmetries of the circular versions of these graphs.
One can show that $\gamma_2^{\mathrm{odd}}$ is a cocycle, since the contraction of each of the four arcs of the circle in the first graph produces the same graph as the contraction of each of the three edges in the second graph.
The associated integral
\begin{equation}
\label{Type2Integrals}
I(\gamma_2^{\mathrm{odd}}) = \frac{1}{4}\int_{F[4;0]} \theta_{13} \theta_{24} - \frac{1}{3}\int_{F[3;1]} \theta_{14} \theta_{24}\theta_{34}
\end{equation}
is for $d=3$ an invariant of long knots, and moreover finite-type of order 2.
For either graph, the underlying trivalent graph after compactifying at infinity is a complete graphs, so the compactified configuration space fibers $F[\Gamma_i]$ are in fact the Axelrod--Singer compactifications.  That is, the distinction between their compactification and the one we use (see Remark \ref{BottTaubesBundle}) does not exist here.  There is a similar cocycle of order 2 for even $d$, which by a mild abuse of notation (i.e.~ignoring orientations) we may also write as $\gamma_2^{\mathrm{even}}=\mathrm{X}-\mathrm{Y}$:
\begin{equation}
\label{Type2GraphsEven}
\gamma_2^{\mathrm{even}} := 
%\frac{1}{4}\>\>\>\>\>\raisebox{-0.5\height}{\includegraphics[scale=0.2]{Xdiagram-even.eps}}\>\>\>\>\>
%- \>\>\>\>\>
%\frac{1}{3}\>\>\>\>\>\raisebox{-0.56\height}{\includegraphics[scale=0.2]{Ydiagram-even.eps}}
\raisebox{-1pc}{\includegraphics[scale=0.3]{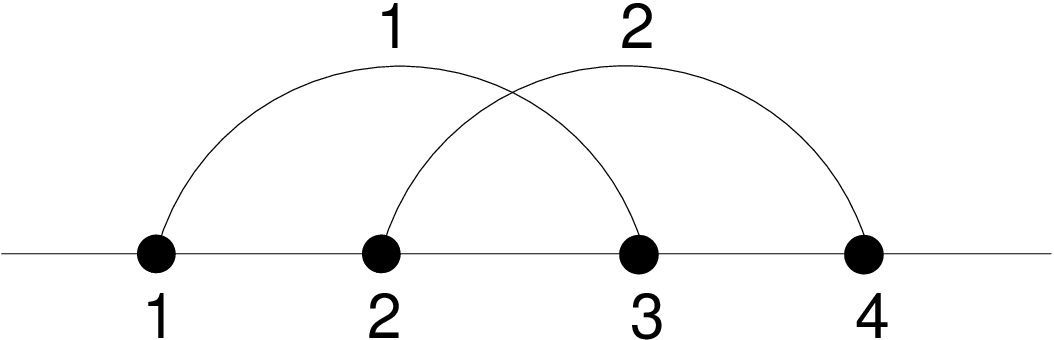}}\qquad - \qquad \raisebox{-1pc}{\includegraphics[scale=0.3]{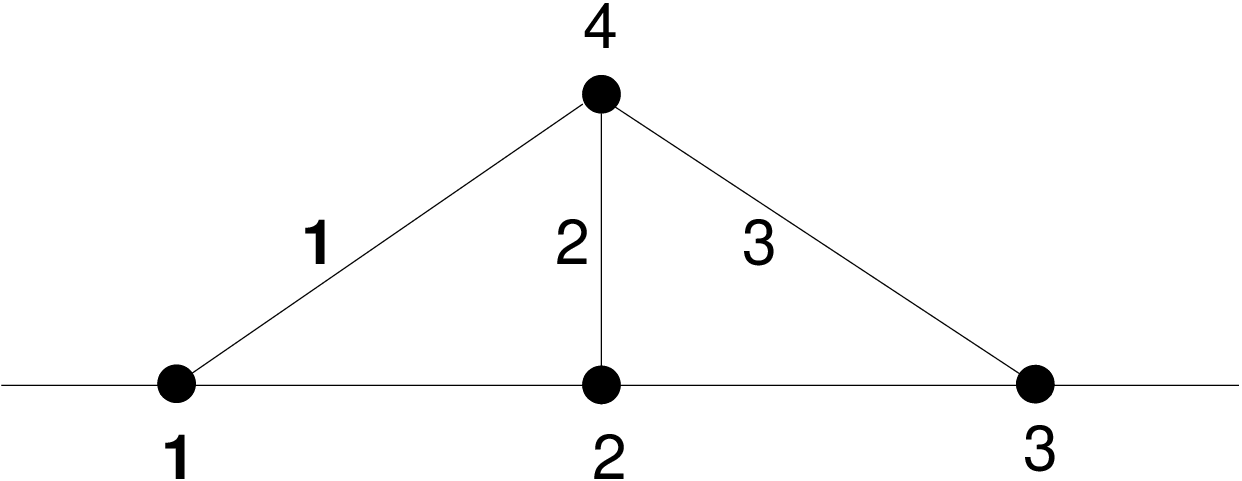}}
\end{equation}
\end{example}

\section{Gluing configuration spaces}
\label{GluingSection}
We now provide a recipe for associating to a graph cocycle $\gamma$ a space obtained by gluing together configuration spaces.  
We will essentially glue together the bundles $X[\Gamma_i]$ for the various graphs $\Gamma_i$ in the cocycle $\gamma$.
This will be a fiberwise construction, meaning that we fix a link $L$ in the base space $\L^d_m$ and then glue together the various fibers $F[\Gamma_i]$ over $L$ to produce a space $F_\gamma$ for each $L$.  
The space $F_\gamma$ will have a fundamental class, relative to $\d F_\gamma$.
Thus allowing the link $L$ to vary yields a bundle $X_\gamma$ over $\L^d_m$ whose fiber has a fundamental class, relative to its boundary.
The gluing will be done so that the space $X_\gamma$ will have a map $\Phi_\gamma$ to an appropriate quotient of a product of $(d-1)$-spheres.  The image under $\Phi_\gamma$ of the remaining boundary $\d X_\gamma$ will have positive codimension, so this boundary can be thought of as the degenerate locus. 
This will allow us to pull back the top-dimensional cohomology class on the product of spheres and use the fundamental class  $[F_\gamma, \d F_\gamma]$ to produce a cohomology class in $\L^d_m$.  The resulting class will correspond to the fiberwise integral of the product of spherical forms.

In Section \ref{IntegralGraphComplex}, we consider integer-valued cocycles $\sum c_i \Gamma_i$, which are the input for our construction.  To such a cocycle we associate a disjoint union of {oriented} configuration spaces $\tX[\Gamma_i]$ closely related to the $X[\Gamma_i]$.  In Section \ref{GluingSubsection} we glue those $\tX[\Gamma_i]$ along their principal faces.  
After gluing, it remains to fold (Section \ref{HiddenFaceInvolution}) and collapse (Section \ref{Collapses}) some faces.  At each of these steps, we observe that the space has a fundamental class relative to the remaining boundary faces.  We will then work relative to these remaining faces, which are listed in Definition \ref{DegenerateLocusDef}.  Finally, in Section \ref{MapToSpheres} we establish the appropriate map to a product of spheres.

\subsection{Preliminaries for the gluing construction}
We address an issue of orientations for comparing  integer-valued cocycles to real-valued cocycles in Section \ref{IntegerCocyclesOrientations}.  We provide a slight refinement of integer-valued cocycles in Section \ref{MinimalCocycles}.  Then in Section \ref{ConfigSpacesToBeGlued}, we describe the associated configuration space bundles, which are slightly modified from bundles of the form \eqref{GammaBTsquare} in Section \ref{BundleOverLinkSpace}.  We orient these bundles (i.e.~their fibers) in Section \ref{OrientationsOnSpacesToBeGlued}.  In Section \ref{ComparisonsToEarlierWork}, we compare the constructions we use throughout the whole Section \ref{GluingSection} to earlier work of other authors.

\label{IntegralGraphComplex}
\subsubsection{Integer-valued cocycles and an issue of orientations}
\label{IntegerCocyclesOrientations}
Recall the definition of the graph complex $\LD$ over a ring $R$ from Section \ref{GraphComplex}.
We first restrict our attention to $R=\Z$.  
Because of the orientation relations in Definition \ref{GraphComplexDef}, the cochain groups need not be free.  In particular, a graph $\Gamma$ with an orientation-reversing automorphism satisfies $\Gamma = - \Gamma$.  Thus the universal coefficient theorem does not apply, and the ranks of the cohomologies over $\Z$ and $\R$ need not be related.  Since we want to recover cocycles originally constructed over $\R$, we amend Definition \ref{GraphComplexDef} by imposing the relation in $\LD$ that 
\begin{equation}
\label{GraphsWithOrRevAutoZero}
\mbox{if $\Gamma$ has an orientation-reversing automorphism, then $\Gamma \sim 0$.}
\end{equation}
By abuse of notation, we will from now until the end of Section \ref{Results} use $\LD$ to denote these free chain groups, where the ring $R$ is still suppressed from the notation.  We view these as bigraded groups $\LD^{*,*}$ as before (Definition \ref{DefectOrderDef}).  The universal coefficient theorem then applies to give $H^*(\LD;\, \R) \cong H^*(\LD;\, \Z) \otimes \R$, where $H^*(\LD;\, R)$ denotes the cohomology of the complex $\LD$  over $R$.  In defect zero, there are no coboundaries, so the space of defect-0 cohomology classes is precisely space of defect-0 cocycles, so we only need to use that $Z^{0,n}(\LD;\, \R) \cong Z^{0,n}(\LD;\, \Z) \otimes \R$, by definition.  

\subsubsection{Minimal cocycles}
\label{MinimalCocycles}
Even after imposing the relation \eqref{GraphsWithOrRevAutoZero}, the orientation relations in $\LD$ pose a second, arguably more minor problem.  An integer-valued cocycle $\gamma \in Z^*(\LD;\,\Z)$ can be written as a finite sum
\[
\gamma = \sum  c_i \Gamma_i
\]
where the  $\Gamma_i$ are oriented diagrams (see Definitions \ref{LabeledGraphs} and \ref{GraphComplexDef}).  
However, this expression is not unique.  The following definitions do not guarantee a unique expression, but they allow us to restrict to relatively efficient ones.

\begin{definitions}
\label{SimplifiedAndMinimalDefs}
For an integral cocycle $\gamma$, call a representative expression $\sum c_i \Gamma_i$ \emph{simplified} if the underlying unoriented graphs $|\Gamma_i|$ are in distinct isomorphism classes, and if none of the $\Gamma_i$ is 0 in $\LD$.  Define $S_\gamma$, the \emph{support} of $\gamma$, as the set of unoriented isomorphism classes which appear in a simplified expression for $\gamma$:
\[
S_\gamma :=\{ |\Gamma_i| : c_i \neq 0 \}.
\]
Call a cocycle $\gamma = \sum c_i \Gamma_i$ \emph{minimal} if both of the conditions below are satisfied:
\begin{enumerate}
\item there is no cocycle $\beta = \sum b_i \Gamma_i$ such that $S_\beta$ is a nonempty proper subset of $S_\gamma$ and such that $b_i = c_i$ for some $i$, and
\item there is no cocycle $\beta$ such that $\gamma = c \beta$ for some $c \in \Z$ with $|c|>1$.
\end{enumerate}
\end{definitions}

Any expression for a cocycle $\gamma$ can be turned into a simplified one, by possibly relabeling the $\Gamma_i$, adding minus signs if necessary, and combining like terms.  Thus $S_\gamma$ is always defined, and it does not depend on the choice of simplified expression for $\gamma$.  Note that in a simplified expression, no $\Gamma_i$ has a multiple edge or an orientation-reversing automorphism.
The next Proposition allows us to restrict our attention to minimal cocycles.  
\begin{proposition}
Any cocycle can be decomposed into a sum of minimal cocycles.
\end{proposition}
\begin{proof}
If an integer-valued cocycle $\gamma$ fails to be minimal because of condition (1) above, then we can write $\gamma = (\gamma - \beta) + \beta$, where the support of each summand on the right-hand side is strictly smaller than that of $\gamma$.  Iterating this step a finite number of times decomposes $\gamma$ into a sum of cocycles satisfying (1) above.  If any summand fails to satisfy (2) as above, we may rewrite it as a sum of $|c|$ terms each of which is minimal.
\end{proof}

\subsubsection{The configuration spaces to be glued}
\label{ConfigSpacesToBeGlued}
Fix a minimal integer-valued cocycle $\gamma$.  Let $N\geq 1$ be any integer at least as large as the maximum number of edges over all the $\Gamma_i$ in $S_\gamma$.  Following terminology of Poirier \cite{PoirierAGT}, we let $a_i:= N - |E(\Gamma_i)|$ be the number of ``absent edges'' in $\Gamma_i$.
Our first step in constructing the glued space $X_\gamma$ is to start with $|c_i|$ disjoint copies of $(S^{d-1})^{a_i} \x X[\Gamma_i]$ for each $i$.  We will abbreviate this total space as $\widetilde{X}[\Gamma_i] := (S^{d-1})^{a_i} \x X[\Gamma_i]$ and its fiber as $\widetilde{F}[\Gamma_i]:=(S^{d-1})^{a_i} \x F[\Gamma_i]$.  By a slight abuse of notation, we will write $\Phi_\Gamma$ for the map from $\widetilde{X}[\Gamma]$ (or $\widetilde{F}[\Gamma]$) to $(S^{d-1})^N$.  We will write $c_i \tX[\Gamma_i]$ for a disjoint union of $|c_i|$ copies of $\tX[\Gamma_i]$ where the sign of $c_i$ affects only the orientation, as described below.  We also write $c_i \Phi_{\Gamma_i}$ for the obvious map $c_i \tX[\Gamma_i] \to (S^{d-1})^N$.

\subsubsection{Orientations on the spaces to be glued}
\label{OrientationsOnSpacesToBeGlued}
Orientations were of course implicitly used in the integration of forms in Section \ref{FiberwiseIntegrals}.  We will now consider them in some detail, so we fix an orientation on $M=\R^d$.  For every $\Gamma_i$, this together with the orientation of $\Gamma_i$ determines an orientation on $M^{V(\Gamma_i)}$ and hence on $C_{\Gamma_i}[\R^d]$.  
By fixing an (arguably canonical) orientation on $\R$, an orientation of $\Gamma_i$ yields an orientation on the fibers $F[\Gamma_i]$ of the bundle $X[\Gamma_i] \to \L^d_m$.  

To address the sphere factors and boundary faces in $\widetilde{F}[\Gamma_i]$, orient all boundaries of manifolds using the convention of outward-pointing normal vector first.  
We then orient $S^{d-1}$ as the boundary of the unit ball in $\R^d$.  This determines an orientation of $\widetilde{F}[\Gamma_i]$.  
If $c_i>0$, take this orientation on all $|c_i|$ copies of $\widetilde{F}[\Gamma_i]$,
while if $c_i<0$, take the opposite of this orientation on all copies of $\widetilde{F}[\Gamma_i]$.
The fiber $F_\gamma$ of the glued space will be a quotient of this disjoint union of oriented manifolds with corners.
There is an obvious bijection between the faces and corners of $\widetilde{F}[\Gamma_i]$ 
and those of $F[\Gamma_i]$.  
We orient a codimension-1 face $\SS(\Gamma_i, \Gamma')$ as part of the boundary of $\widetilde{F}[\Gamma_i]$.  This boundary orientation will be important in the principal face gluings in Section \ref{GluingSubsection}, specifically in Lemma \ref{GluingInteriorsOfPrincipalFaces}.

\subsubsection{Comparisons to earlier work}
\label{ComparisonsToEarlierWork}
For the remainder of Section \ref{GluingSection}, we will recast in a more topological framework all the vanishing arguments alluded to in Remark \ref{TypesOfVanishing}.  
We now compare our steps below to earlier works containing those arguments:
\begin{itemize}
\item 
In Section \ref{GluingSubsection}, we construct a glued space for each graph cocycle by \emph{gluing principal faces} two at a time, whereas Kuperberg and Thurston \cite{KuperbergThurston} construct one universal glued space by identifying more than two principal faces at once.  
\item 
The \emph{hidden face involution} $\iota$ in Section \ref{HiddenFaceInvolution} is due to Kontsevich \cite{KontsevichFeynman}, and also found in the work of Bott and Taubes \cite{BottTaubes}, D.~Thurston \cite{DThurstonABThesis}, Bott and Cattaneo \cite{BottCattaneo}, Kuperberg and Thurston \cite{KuperbergThurston}, and Cattaneo, Cotta-Ramusino, and Longoni \cite{CCRL-AGT}.  
\item
The \emph{collapse for bivalent segment vertices with no incident edge} in Section \ref{Collapses} is an analogue of Lemma A.9 of \cite{CCRL-AGT}.
\item
Proposition \ref{ClassInRelCoh} encodes the \emph{degeneracy arguments} and is analogous to statements in \cite[Proposition 4.5, Theorem A.11]{CCRL-AGT}, \cite[Section 4.2]{DThurstonABThesis}, and \cite[Theorem 4.36]{KMV}.
\item
Lemma \ref{PositiveCodimension} is the main step in establishing the \emph{degeneracy arguments} of Proposition \ref{ClassInRelCoh}, and its proof via several cases is a topological restatement of the arguments given in \cite{CCRL-AGT, KMV}.  We have preserved the numbering of the various cases used in those references.  
\end{itemize}

\subsection{Principal face gluings}
\label{GluingSubsection}
In this section, we glue the configuration space bundles along their principal faces.  We first check in Section \ref{GluingInteriorsOfPrincipalFaces} that the interiors of a pair of principal faces with canceling contributions in $\delta \gamma$ can be identified by an orientation-reversing diffeomorphism.  We then check in Section \ref{GluingClosuresOfPrincipalFaces} that this gluing extends to the closures of these faces.
We summarize this part of the construction in Section \ref{ImplementingPrincipalGluings}.

\subsubsection{Gluing interiors of a pair of principal faces}
\label{GluingInteriorsOfPrincipalFacesSubsubsection}
Consider the fiber $\coprod c_i\widetilde{F}[\Gamma_i]$ of the space associated to an expression for $\gamma$.
Recall from Section \ref{CornerDescription} that an edge or arc $e$ of a $\Gamma_i$ corresponds to a principal face of $F[\Gamma_i]$, denoted $\SS(\Gamma_i, e)$ (see Notation \ref{StratumNotation}). 
We use the same notation $\SS(\Gamma_i, e)$ for the corresponding face in $\widetilde{F}[\Gamma_i]$.  
By this correspondence, we can view $\delta \gamma$ as a disjoint union of principal faces of the $\widetilde{F}[\Gamma_i]$ (or $-\widetilde{F}[\Gamma_i]$ in the case of negative $c_i$), which we will glue in pairs.
Recall the signs $\eps(e)$ from \eqref{OddSigns} and  \eqref{EvenSigns} that were used to define the coboundary operator in the graph complex
.

\begin{lemma}
\label{GluingInteriorsOfPrincipalFaces}
 \
\begin{enumerate}
\item
Given labeled graphs $\Gamma_i$ and $\Gamma_j$, an isomorphism $\Gamma_i /e \to \Gamma_j / f$ gives rise to a diffeomorphism of the interiors of the corresponding principal faces $g: \SS(\Gamma_i, e) \to \SS(\Gamma_j, f)$.  
\item
Suppose the isomorphism $\eps(e) \Gamma_i /e \to \eps(f) \Gamma_j / f$ preserves (respectively reverses) orientation.  
Then we can find a diffeomorphism $g: \SS(\Gamma_i, e) \to \SS(\Gamma_j, f)$ which preserves (respectively reverses) orientation such that the spherical maps $\Phi_{\Gamma_i}$ and $\Phi_{\Gamma_j} \circ g$ agree up to a diffeomorphism of $(S^{d-1})^N$ that is orientation-preserving (in both cases). 
\end{enumerate}
\end{lemma}

\begin{proof}
We will first prove both statements for the case of $d$ odd.
Let $v$ and $w$ be the endpoints of $e$, and suppose $e$ is oriented from $v$ to $w$.  
An interior point of $\SS(\Gamma_i, e)$ is given by a configuration of points in $\Gamma_i/e$ together with a direction of collision between $x_v$ and $x_w$.  We take this to be the direction from $x_v$ to $x_w$.  There are two cases for this direction of collision.  
\begin{itemize}
\item[(i)]
If $v$ and $w$ are both segment vertices, then this is a direction in $\R$, i.e., a point in $S^0=\{+,-\}$, necessarily $+$ since the arc orientation coincides with the orientation of $\R$. 
Then 
\begin{equation}
\label{PrincipalFaceIso1}
\SS(\Gamma_i, e) \cong \widetilde{F}[\Gamma_i/e] = (S^{d-1})^{a_i} \x {F}[\Gamma_i/e].
\end{equation}
\item[(ii)]
Otherwise, it is a direction in $\R^d$, i.e., a vector in $S^{d-1}$.    Then 
\begin{equation}
\label{PrincipalFaceIso2}
\SS(\Gamma_i, e) \cong \widetilde{F}[\Gamma_i/e] = (S^{d-1})^{a_i+1} \x {F}[\Gamma_i/e].
\end{equation}
\end{itemize}
(The space $\widetilde{F}[\Gamma_i/e]$ has an orientation as in Section \ref{OrientationsOnSpacesToBeGlued}.   We address whether it agrees with that of $\SS(\Gamma_i, e)$ in proving part (2) below.)
A similar dichotomy (i)-(ii) applies to $\SS(\Gamma_j, f)$.
The numbers $a_i$ and $a_j$ of absent edges in $\Gamma_i$ and $\Gamma_j$ agree precisely when either both are in case (i) or both are in case (ii).  In these cases, it suffices to identify ${F} [\Gamma_i/e] \to {F}[\Gamma_j/f]$ via the graph isomorphism $\Gamma_i/e \to \Gamma_j/f$.  

The remaining cases are when $a_i$ and $a_j$ differ by 1.  Without loss of generality, suppose $a_j=a_i+1$.  
Then the collision in $\Gamma_j$ is of two segment vertices, so there is no direction of collision to specify.  In $\Gamma_i$ however it is a collision of a free vertex with a segment vertex, parametrized by a vector in $S^{d-1}$.
We map 
$\widetilde{F}[\Gamma_i/e]$ to $\widetilde{F}[\Gamma_j/f]$ 
by sending this vector into the first $S^{d-1}$ factor in 
$\widetilde{F}[\Gamma_j/f]$.  In summary, in each case, we have a composition of diffeomorphisms:
\begin{equation}
\label{PrincipalFaceIso3}
\SS(\Gamma_i, e) \cong \widetilde{F}[\Gamma_i/e] \cong \widetilde{F}[\Gamma_j/f] \cong \SS(\Gamma_j,f)
\end{equation}
This completes the proof of statement (1) for $d$ odd.

For statement (2), one can define a map $\Phi_{\Gamma_i/e}: \tF[\Gamma_i/e] \to (S^{d-1})^N$.  
Because of the labeling on $\Gamma_i/e$, the map $\Phi_{\Gamma_i/e}$ is (like $\Phi_{\Gamma_i}$) canonical up to permutations of the $N$ factors for odd $d$ and up to antipodal maps on the factors for even $d$.  Both of these types of transformations preserve orientation.
We can find maps $(S^{d-1})^N \to  (S^{d-1})^N$ making the following diagram commute, where the horizontal arrows are diffeomorphisms:
\begin{equation}
\label{ProductOfSigns}
\xymatrix{
\SS(\Gamma_i, e) \ar@{<->}[r]\ar[d]_-{\Phi_{\Gamma_i}} & \widetilde{F}[\Gamma_i/e] \ar@{<->}[r] \ar[d]_-{\Phi_{\Gamma_i/e}} & \widetilde{F}[\Gamma_j/f] \ar[d]^-{\Phi_{\Gamma_j/f}} & \ar@{<->}[l] \SS(\Gamma_j, f) \ar[d]^-{\Phi_{\Gamma_j}}  \\
(S^{d-1})^N\ar@{<->}[r] & (S^{d-1})^N \ar@{<->}[r]^-{\mathrm{id}} & (S^{d-1})^N & \ar@{<->}[l] (S^{d-1})^N
}
\end{equation}
Associate to any isomorphism (like those above) is a sign $\pm 1$ according as it preserves or reverses orientation.  
One can verify from our definitions and conventions that the sign $\eps(e)$ is a product of the two signs in the left-hand side of the diagram, while the sign $\eps(f)$ is the product of the two signs in the right-hand side of the diagram.  The sign of the top middle horizontal arrow is the sign $\nu$ of the graph isomorphism $\Gamma_i /e \to \Gamma_j / f$.  If the product of signs in the top row is $\eps(e) \nu \eps(f)$ and the product of signs in the bottom row is $+1$, we are done.  If not, then both of these products are the wrong sign.  In this case, we modify the maps in the middle square by composing by an antipodal map on some factor of $S^{d-1}$ in the product \eqref{PrincipalFaceIso1} or \eqref{PrincipalFaceIso2}.  (Such a factor exists because in \eqref{PrincipalFaceIso1}, we must have $a_i \geq 1$ and in \eqref{PrincipalFaceIso2}, $a_i \geq 0$.)

Finally, suppose $d$ is even.  In this case, we arbitrarily choose a direction between $x_v$ and $x_w$, since it has no effect on orientation.  We get a commutative diagram of diffeomorphisms as in \eqref{ProductOfSigns} for the odd case.  To get the desired effect on orientation, we cannot use the antipodal map, but we can instead use an orientation-reversing diffeomorphism on some factor of $S^{d-1}$, such as a reflection in a hyperplane.
\end{proof}

\subsubsection{Collapses induced by gluing closures of principal faces}
\label{GluingClosuresOfPrincipalFaces}
We now want to extend a diffeomorphism $g: \mathrm{int}(\SS(\Gamma_i, e)) \to \mathrm{int}(\SS(\Gamma_j f))$ of the interiors of principal faces to the closures of these faces.  
Because the compactification $C_\Gamma[\R^d]$ is a blowup of only \emph{some} diagonals, depending on $\Gamma$, it is possible that two corresponding corners of $\SS(\Gamma_i, e)$ and $\SS(\Gamma_j, f)$ are not diffeomorphic and may even have different codimension.

In more detail, for a corner $\SS(\Gamma_i, S)$ of the face $\SS(\Gamma_i,e)$, $S$ is a set of biconnected subgraphs of $\Sigma \Gamma_i$ with $e \in S$ and satisfying the properties given in Section \ref{CornerDescription}.
Since $\Gamma_i/e \cong \Gamma_j/e$, the graphs $\Gamma_i$ and $\Gamma_j$ differ precisely by an edge reconnection as in Figure \ref{CollapsingCornersFig}, that is, a re-partitioning of those edge-ends incident to an endpoint of $e$.  
The same is true of their suspensions.  
After this reconnection, the corresponding subgraph in $\Sigma \Gamma_j$ may or may not be biconnected.  
See Figures \ref{CollapsingCornersFig} and \ref{Cicadas}.

\begin{figure}[h!]
\includegraphics[scale=0.27]{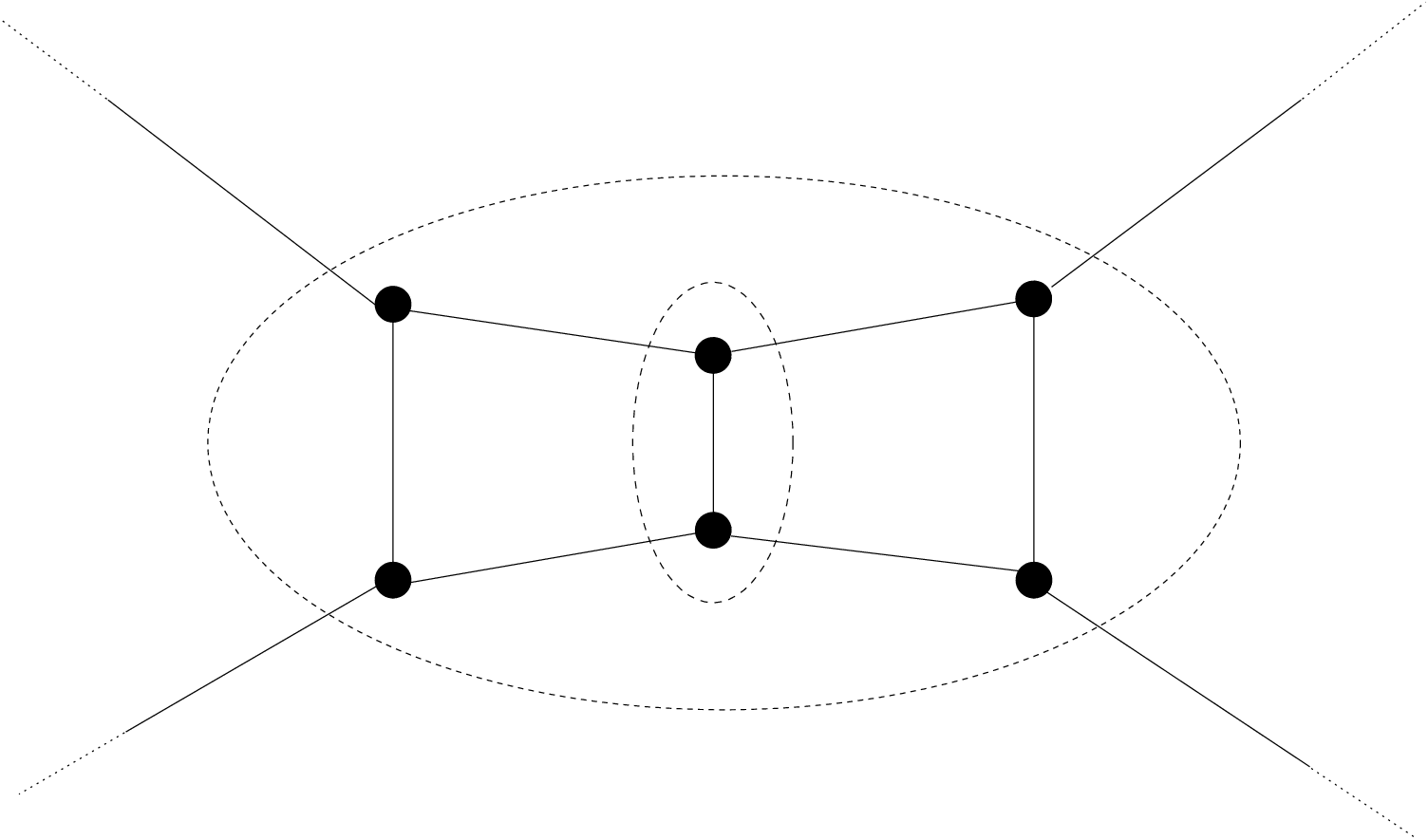}
\qquad \qquad
\includegraphics[scale=0.27]{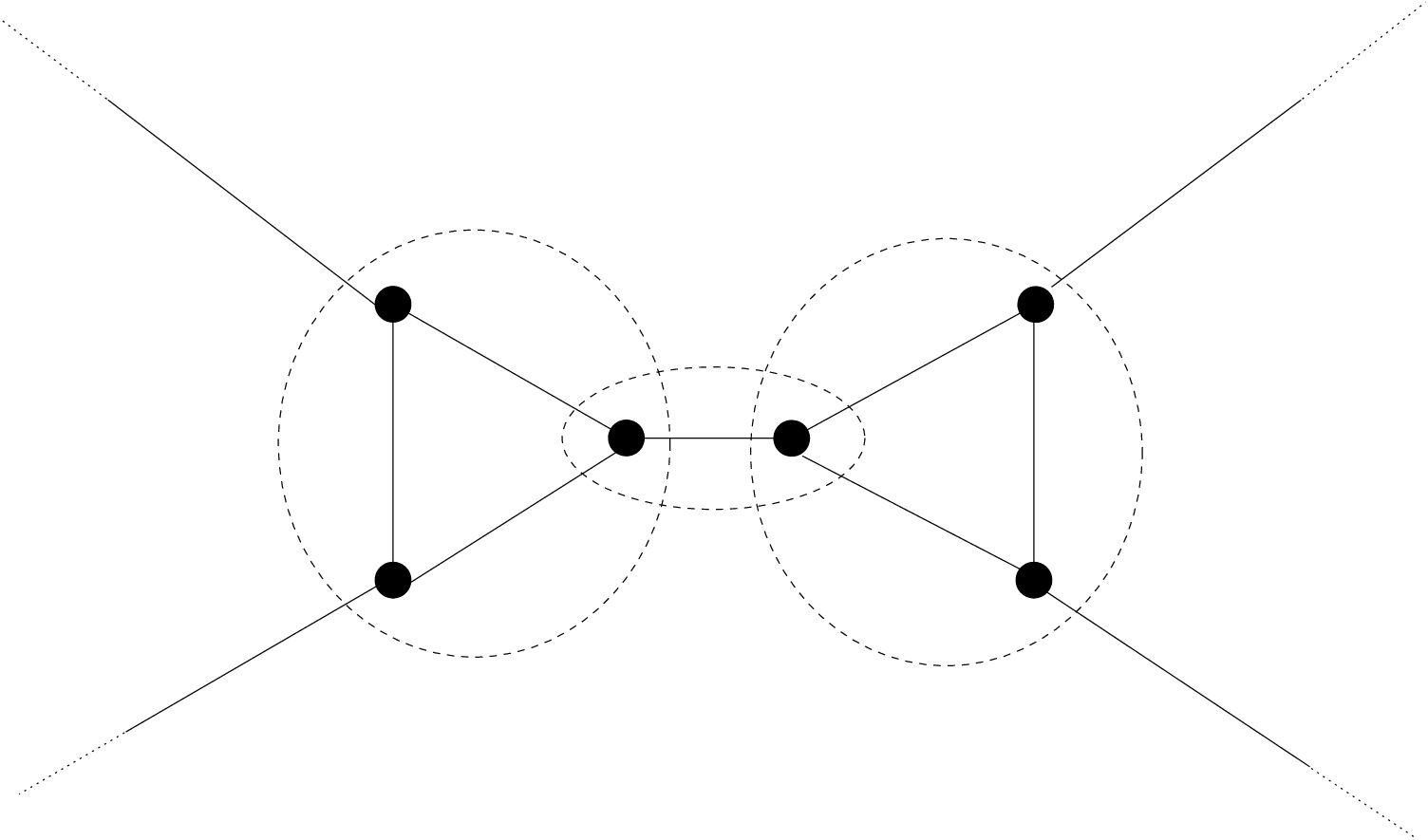}
\caption{This is an example where a principal face gluing identifies corners of different dimension, as in \cite{KuperbergThurston}.  The two graphs (with orientations omitted) agree outside of the pictured portions.  Contracting the middle edge in each figure gives isomorphic graphs, so the interiors of the corresponding principal faces can be glued together.  Consider the codimension-2 face of the left-hand space indexed by the circled subgraphs, i.e., the double-square and the middle edge.  The six vertices in the right-hand graph do not form a biconnected subgraph, so the corresponding corner of the right-hand principal face (where all six points have collided) is indexed by the left triangle, the middle edge, and the right triangle, and is thus a codimension-3 face.  The canonical way to identify these corners is via the blow-down map from the left-hand, codimension-2 corner to the right-hand, codimension-3 corner.}
\label{CollapsingCornersFig}
\end{figure}

\begin{definition}
\label{CornerGluing}
Suppose $g: \mathrm{int}(\SS(\Gamma_i, e)) \to \mathrm{int}(\SS(\Gamma_j f))$ is a gluing of the interiors of two principal faces of $\widetilde{F}[\Gamma_i]$ and $\widetilde{F}[\Gamma_j]$ coming from a graph isomorphism $\Gamma_i/e \cong \Gamma_j/f$.  Let $S$ index a corner of $\SS(\Gamma_i, e)$ (so $S$ is a set of biconnected subgraphs of $\Sigma \Gamma_i$ with $e \in S$).  
We construct $g(S)$ as a set of biconnected subgraphs of $\Sigma\Gamma_j$, starting with $g(S)=\varnothing$, as follows.
The isomorphism $\Gamma_i/e \cong \Gamma_j/f$ induces an isomorphism $\Gamma_i \setminus e \cong \Gamma_j \setminus f$.
Let $\Gamma''$ be a subgraph of $\Sigma\Gamma_j$ corresponding to a subgraph $\Gamma' \in S$ via this isomorphism and a fixed identification of $e$ with $f$.
\begin{itemize}
\item
For each such $\Gamma''$ that is biconnected, add $\Gamma''$ to $g(S)$.    
\item
For each such $\Gamma''$ that is not biconnected and contains $f$, add the maximal biconnected components $\Gamma''_1, \dots, \Gamma''_j$ of $\Gamma''$ to $g(S)$.  
\item
Add $\bigcup \Gamma'' \cup f$ to $g(S)$, where the union is taken over subgraphs $\Gamma''$ which are not biconnected and which intersect $f$ in one vertex.
\end{itemize}
\end{definition}

\begin{lemma}
\label{ExtendingGlueToClosures}
Let $g: \mathrm{int}(\SS(\Gamma_i, e)) \to \mathrm{int}(\SS(\Gamma_j, f))$ be a diffeomorphism coming from a graph automorphism $\Gamma_i/e \cong \Gamma_j/f$.  Let $S$ index a corner of $\SS(\Gamma_i, e)$, and let $g(S)$ be defined as in Definition \ref{CornerGluing}.  Then $g$ extends to a homeomorphism $g: \SS(\Gamma_i, e) \to \SS(\Gamma_j, f)$ of the closures by gluing the images of $\SS(\Gamma_i, S)$ to $\SS(\Gamma_i, g(S))$ after certain canonical collapses, given by forgetting relative rates of approach.  
Locally, the collapses are submersions of Euclidean spaces.
\end{lemma}
\begin{proof}
The reader may verify as in Section \ref{CornerDescription} that $g(S)$ indeed indexes a corner of the face $\SS(\Gamma_j, f)$.  
We will check that the corners can be identified as claimed.
We will proceed by showing that at each of the three steps in constructing $g(S)$, we obtain a diffeomorphism of the relevant screen spaces (or products thereof), after certain collapses.  

Let $\Gamma' \in S$, and let $\Gamma''$ be the corresponding subgraph of $\Gamma_j$.  
First suppose $\Gamma''$ is biconnected.  Then we may simply identify the screen spaces for $\Gamma'$ and $\Gamma''$.  (See Section \ref{CornerDescription} for the definition of the screen spaces.)
Indeed, though the screen space for $\Gamma'$ is indexed by vertices in a \emph{quotient} of $\Gamma'$, the edge reconnection induces the corresponding relation on $\Gamma''$.  In the remaining cases, we will thus make the simplifying assumption that the screen space for $\Gamma'$ is indexed by vertices in $\Gamma'$, etc.

Next consider the case where $\Gamma' \in S$ yields components $\Gamma''_1, \dots, \Gamma''_k \subset \Gamma''$ in $g(S)$.
Here we will replace the screen space for $\Gamma'$  by a product of screen spaces, one for each $\Gamma''_\ell$.  More precisely, we perform a canonical collapse in $\SS(\Gamma_i, e)$ given by the projections induced by the inclusions $V(\Gamma''_\ell) \incl V(\Gamma'') \cong V(\Gamma')$:
\begin{equation}
\label{CanonicalCollapse}
  C_{\Gamma'}(T_p \R^d)/(\R^d \rtimes \R_+)
 \longrightarrow
\prod_{\ell=1}^k C_{\Gamma''_\ell}(T_p \R^d)/(\R^d \rtimes \R_+).
\end{equation}
This dimension of the codomain is $k-1$ less than the dimension of the domain.
A $(k-1)$-tuple of ratios in $[0,\infty]$ specifying the scales of the screens (relative to say the first one) gives a diffeomorphism
\[
[0, \infty]^{k-1} \x \prod_{\ell=1}^k C_{\Gamma''_\ell}(T_p \R^d)/(\R^d \rtimes \R_+)
\overset{\cong}{\longrightarrow} C_{\Gamma'}(T_p \R^d)/(\R^d \rtimes \R_+).
\]
The collapse \eqref{CanonicalCollapse} is thus given by forgetting these ratios, or relative rates of approach, so it is locally a submersion of Euclidean spaces.  After performing that collapse, we can identify the image of the screen space for $\Gamma'$ with the product of the screen spaces for the $\Gamma''_\ell$.

For the third case in Definition \ref{CornerGluing}, first suppose that there is only one subgraph $\Gamma'$ yielding $\Gamma'' \cup f$.  Then the screen space for $\Gamma'$ is canonically diffeomorphic to that for $\Gamma'' \cup f$.  This is because $f \subset \Gamma'' \cup f$, so the two endpoints of $f$ are identified in the latter screen space.

Finally, it remains to consider the case where subgraphs $\Gamma'_1, \dots, \Gamma'_k \in S$ and corresponding subgraphs $\Gamma''_\ell \subset \Sigma\Gamma_j$ yield one subgraph $\Gamma''_1 \cup \dots \cup \Gamma''_k \cup f =: \Gamma'' \in g(S)$.  Here we implement a canonical collapse in $\SS(\Gamma_j, f)$, analogous to \eqref{CanonicalCollapse} and given by 
\begin{equation}
\label{CanonicalCollapse2}
  C_{\Gamma''}(T_p \R^d)/(\R^d \rtimes \R_+)
 \longrightarrow
\prod_{\ell=1}^k C_{\Gamma''_i}(T_p \R^d)/(\R^d \rtimes \R_+).
\end{equation}
Then we can identify the product of screen spaces for the $\Gamma'_i$ with the screen space for $\Gamma''$.
\end{proof}

\begin{figure}
\includegraphics[scale=0.27]{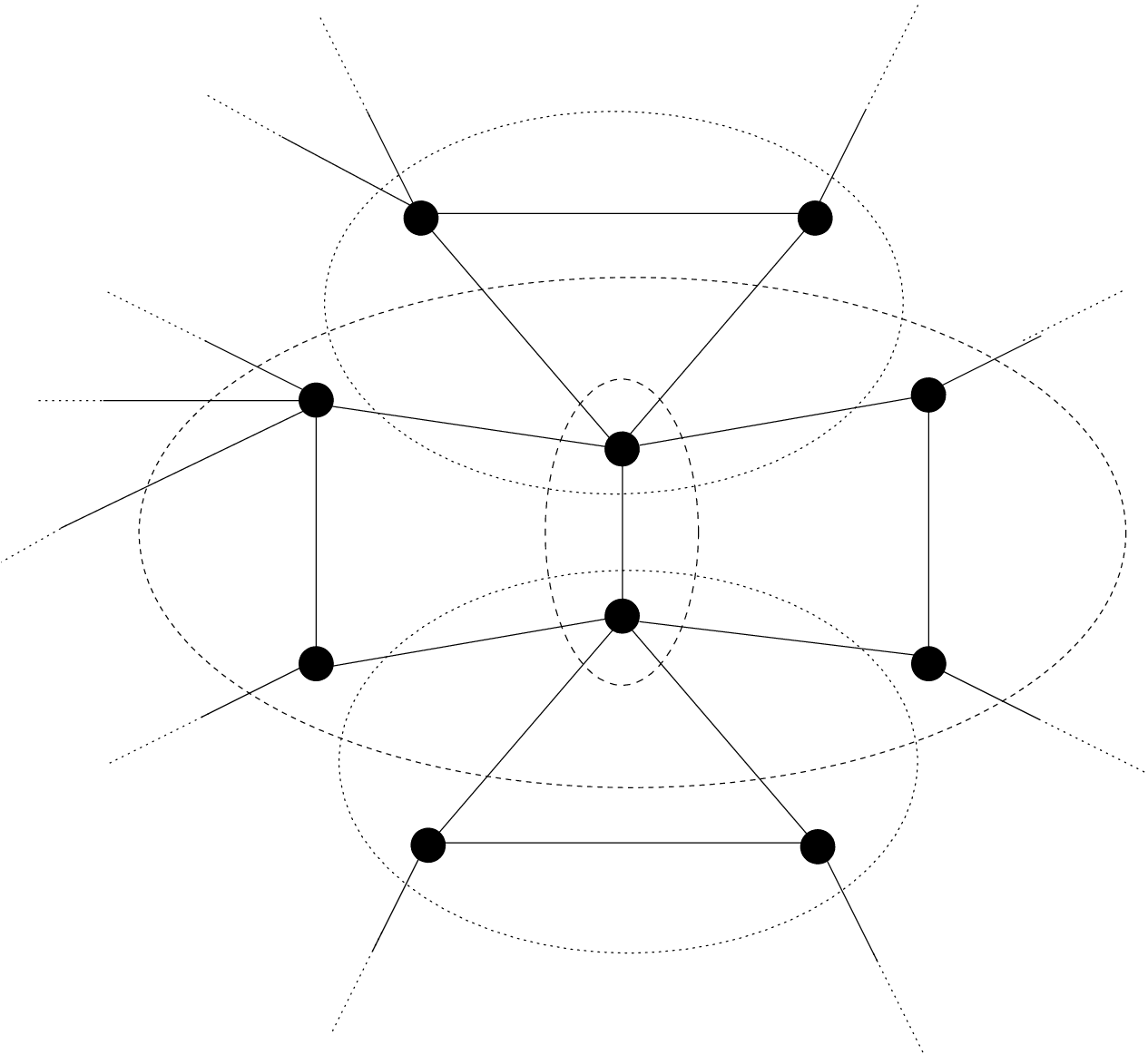}
\qquad \qquad
\rotatebox{90}{\includegraphics[scale=0.27]{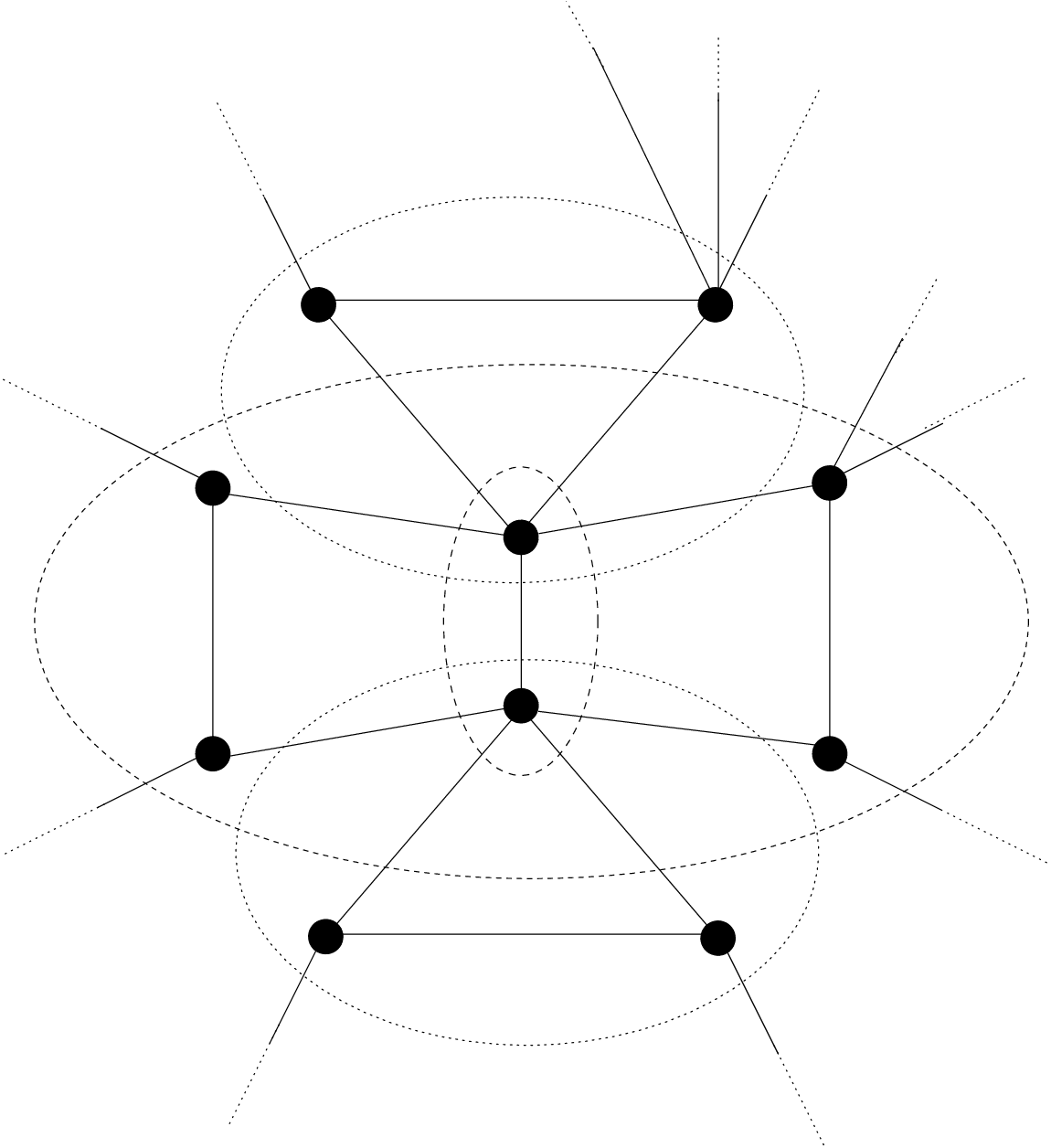}}
\caption{Let $e$ and $f$ be the circled edges above in $\Gamma_i$ (left) and $\Gamma_j$ (right).  
Consider corners $\SS(\Gamma_i, S)$ and $\SS(\Gamma_j, T)$, where $S$ and $T$ each consist of the four circled subgraphs in the left and right pictures respectively.  
This is an example where a principal face gluing induced by $\Gamma_i / e \cong \Gamma_j/f$ reduces the dimension of both identified corners, even though both corners are of codimension 4.  The contraction of the circled edge in each figure yields isomorphic graphs and hence a principal face gluing.  For either face, the gluing results in forgetting a relative rate of approach when passing from a double-square to two triangles joined by an edge.  That is, we forget the relative sizes of the two triangles involved, which was part of the data in the double-square, and the corners become of codimension 5.}
\label{Cicadas}
\end{figure}

\subsubsection{Implementing the gluings}
\label{ImplementingPrincipalGluings}

We now glue the principal faces by partitioning the graphs in an expression for $\delta \gamma$ into unoriented isomorphism classes.  For each class, the terms sum to zero when one accounts for orientations.  Thus within each class, we can group the terms in pairs with opposite orientations.  We arbitrarily choose any such pairings, where each pair determines an orientation-\emph{reversing} isomorphism of graphs.  By Lemma \ref{GluingInteriorsOfPrincipalFaces} we get an orientation-reversing diffeomorphism of their interiors, and by Lemma \ref{ExtendingGlueToClosures}, the induced collapses of some corners yield a homeomorphism of the resulting closures.

\begin{notation}
\label{GluingNotation}
Let $g$ denote the equivalence relation induced by grouping the terms in $\delta \gamma$ into pairs and gluing these pairs via Lemma \ref{GluingInteriorsOfPrincipalFaces} and \ref{ExtendingGlueToClosures}.
\end{notation}

\subsection{Folding principal faces with orientation-reversing automorphisms}
\label{RemainingPrincipalFaces}
The relation \eqref{GraphsWithOrRevAutoZero} means that some graphs in $\delta \gamma$ could vanish by having orientation-reversing automorphisms.

\begin{lemma}
\label{FoldingPrinFacesLemma}
Suppose $\Gamma_i/e$ has an orientation-reversing automorphism.
Then we can take the associated diffeomorphism $\iota$ of $\SS(\Gamma_i, e)$ to be orientation-reversing, and the spherical maps $\Phi_{\Gamma_i}$ and $\Phi_{\Gamma_i} \circ \iota$ to agree up to an orientation-preserving diffeomorphism of the codomain.
\end{lemma}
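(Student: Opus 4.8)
The plan is to rerun the orientation analysis of Lemma~\ref{GluingInteriorsOfPrincipalFaces} with $\Gamma_j$, $f$ and the isomorphism $\Gamma_i/e\to\Gamma_j/f$ all specialized to $\Gamma_i$, $e$ and the automorphism $\alpha$. First I would produce the self-map: applying part~(1) of Lemma~\ref{GluingInteriorsOfPrincipalFaces} to $\alpha$ yields a self-diffeomorphism of $\mathrm{int}(\SS(e))$ which permutes the configuration points by the vertex permutation $\sigma$ underlying $\alpha$, permutes the $S^{d-1}$-factors (for the edges and absent edges) according to $\alpha$, and replaces $\phi_{ab}$ by its antipode on each edge that $\alpha$ reverses; it is canonical up to (a)~permuting $S^{d-1}$-factors and (b)~transposing the pair of collided points. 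Unlike in Lemma~\ref{ExtendingGlueToClosures}, extending $\iota$ over the closure $\SS(e)$ requires no collapses: since $\alpha$ is a genuine automorphism of $\Gamma_i/e$ and (viewing $e$ as persisting infinitesimally, as in Figure~\ref{ConsistentOrientationsFig}) respects the contracted edge $e$, it permutes the biconnected subgraphs of $\Gamma_i$ through $e$ and hence the corners of $\SS(e)$, preserving their codimensions.

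It then remains to track orientations, exactly as in the proof of part~(2) of Lemma~\ref{GluingInteriorsOfPrincipalFaces}. By the orientation conventions for odd $d$ set up in Definition~\ref{OrientedGraphs} (which are precisely what makes the relations on $\D$ compatible with the orientations of the bundles and with the spherical maps), $\iota$ multiplies the orientation of $\SS(e)$ by $\mathrm{sign}(\sigma)(-1)^{\ell}$, where $\ell$ is the number of self-loop edge-end transpositions induced by $\alpha$, while the induced self-map of the codomain $\prod S^{d-1}$ — a permutation of factors followed by antipodes on the $k$ factors of the edges that $\alpha$ reverses — multiplies the orientation of the codomain by $(-1)^{k}$ (a transposition of two $S^{d-1}$-factors is orientation-preserving for $d$ odd, as $\dim S^{d-1}$ is even). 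Since $\alpha$ is orientation-reversing, $\mathrm{sign}(\sigma)(-1)^{k+\ell}=-1$, so these two signs are opposite; in particular their product is always $-1$, so as soon as $\iota$ is arranged to be orientation-reversing the codomain self-map is automatically orientation-preserving.

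To arrange $\iota$ orientation-reversing I would use the freedom~(b): if $\mathrm{sign}(\sigma)(-1)^{\ell}=-1$ already, take $\iota$ without the transposition; otherwise $k$ is odd, and implementing the transposition of the two collided points corrects both signs at once. Indeed, for odd $d$ this transposition negates the screen direction, which is simultaneously an orientation-reversing antipodal map on the screen factor $S^{d-1}$ of $\SS(e)$ and the antipodal map on the $e$-factor $\phi_{e}$ of the codomain, both orientation-reversing because $\dim S^{d-1}$ is even. (The transposition is available whenever $e$ is a non-self-loop edge; if $e$ is an arc then $\alpha$ cannot reverse it and one checks the parities directly. For $d$ even this transposition is orientation-preserving, which is exactly the obstruction that confines us to $\Z/2$ there, as with the analogous issue noted after Lemma~\ref{GluingInteriorsOfPrincipalFaces}.) The main obstacle is thus entirely the sign bookkeeping of this last step: confirming that the single constraint $\mathrm{sign}(\sigma)(-1)^{k+\ell}=-1$ always splits as ``$\iota$ orientation-reversing'' and ``codomain map orientation-preserving'' using only the collided-point transposition, and that this correction behaves correctly precisely because $S^{d-1}$ is even-dimensional when $d$ is odd.
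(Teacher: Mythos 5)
Your proof is correct and follows essentially the same route as the paper's: specialize the orientation analysis of Lemma \ref{GluingInteriorsOfPrincipalFaces} to the self-isomorphism $\alpha$, and use the freedom of transposing the two collided endpoints of $e$ (which for $d$ odd simultaneously reverses the orientation of $\SS(e)$ via the screen factor and applies one antipodal map to the codomain) to fix both signs at once, the hypothesis $\mathrm{sign}(\sigma)(-1)^{k+\ell}=-1$ guaranteeing the two signs are always opposite. One small bookkeeping correction: a transposition of self-loop edge-ends moves no configuration point, so $(-1)^{\ell}$ counts toward the codomain (an antipodal map on the tangent-vector factor $\phi_{ii}$) rather than toward the orientation of $\SS(e)$; the dichotomy is unaffected, but the criterion for implementing the endpoint transposition should be $\mathrm{sign}(\sigma)=+1$ rather than $\mathrm{sign}(\sigma)(-1)^{\ell}=+1$.
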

\begin{proof}
As in Lemma \ref{GluingInteriorsOfPrincipalFaces}, a graph automorphism $\alpha$ of $\Gamma_i/e$ determines a diffeomorphism $\iota$ of $\SS(\Gamma_i, e)$.  
Suppose $\alpha$ is orientation-reversing.  
If $\iota$ reverses orientation, then $\Phi_{\Gamma_i}$ and $\Phi_{\Gamma_i} \circ \iota$ must agree up to an orientation-preserving diffeomorphism, since the sign of $\alpha$ is the product of these two signs.  In this case, we are done.  If $\iota$ preserves orientation, then $\Phi_{\Gamma_i}$ and $\Phi_{\Gamma_i} \circ \iota$ must agree up to an orientation-reversing diffeomorphism.  In this case, we complete the proof by composing $\iota$ with an orientation-reversing map of a factor $S^{d-1}$, as in the proof of Lemma \ref{GluingInteriorsOfPrincipalFaces}.
\end{proof}

The next lemma is proven by similar arguments as those in the proof of Lemma \ref{ExtendingGlueToClosures}:
\begin{lemma}
\label{PrinFoldExtendsToCorners}
Let $\iota$ be a diffeomorphism of $\SS(\Gamma_i, e)$ coming from a graph automorphism $\alpha$ of $\Gamma_i/e$.  Then $\iota$ extends to the closure of $\SS(\Gamma_i, e)$.
\qed
\end{lemma}

Now for either parity of $d$, we fold $\SS(\Gamma_i, e)$, identifying $x \in \SS(\Gamma_i, e)$ with $\iota(x) \in \SS(\Gamma_i, e)$, for every such principal face in $\gamma$.  If a principal face has multiple such automorphisms, we arbitrarily choose one, making no claim that different choices yield identical homeomorphism types.

\begin{notation}
\label{PrincipalFoldingNotation}
Let $r$ denote the equivalence relation given by folding all principal faces with orientation-reversing automorphisms.\\
\end{notation}

At this point, the only principal faces unaccounted for are those corresponding to $\Gamma_i/e$ with a multiple edge.
Such a graph is zero in the graph complex because the associated configuration space integral has a factor $\theta_{ij}^2$ and hence vanishes.  In our construction, we leave the corresponding face $\SS(\Gamma_i, e)$ unglued to any other face, to be relegated to the degenerate locus $\d F_\gamma$ in Definition \ref{DegenerateLocusDef}.

\subsection{Folding hidden faces with bivalent free vertices}
\label{HiddenFaceInvolution}
Having completed principal face gluings, we will modify the result $\widetilde{F}[\Gamma_i]/(c,g,r)$ by folding certain hidden faces.
A hidden face of $\widetilde{F}[\Gamma_i]$ is $\SS(\Gamma_i, \Gamma')$ for some proper, biconnected subgraph $\Gamma' \subset \Gamma_i$.  
Recall that a point in $\SS(\Gamma_i, \Gamma')$ can be described by a configuration in $\R^d$ of the vertices of $\Gamma_i/\Gamma'$, together with a configuration of the vertices of $\Gamma'$ in $T_p\R^d$, modulo translation and oriented scaling, where $p$ is the location of the basepoint in $\Gamma_i/ \Gamma'$.  
(Some of the points in this infinitesimal configuration may be constrained to lie on the tangent line to the link at a point.)
Since $\Gamma'$ is biconnected, it has no 0-valent vertices and no univalent vertices.  

In this Subsection, we consider only hidden faces $\SS(\Gamma_i, \Gamma')$ such that $\Gamma'$ has a bivalent free vertex $v$.  
Let $u,w$ be the vertices in $\Gamma'$ joined by edges to $v$.
We consider an involution of $\SS(\Gamma_i, \Gamma')$ which comes from an involution of the screen space $C_{\Gamma'}(T_p S^d)/ (\R^d \rtimes \R_+)$, given by $x_v \mapsto x_u + x_w - x_v$ and fixing all the other vertices of $\Gamma'$.  
Denote the resulting involution of $\SS(\Gamma_i, \Gamma')$ by $\iota$.
\begin{lemma}
\label{InvolutionsExtendToCorners}
The involution $\iota$ extends from the interior of $\SS(\Gamma_i, \Gamma')$ to its corners.
\end{lemma}
\begin{proof}
As before, such a corner is indexed by a set $S$ of biconnected subgraphs of $\Sigma \Gamma_i$.
We consider all the cases of such subgraphs involved in a corner contained in $\SS(\Gamma_i, \Gamma')$, and their associated screen spaces.  
By Proposition \ref{NecessaryConditionCorner} applied to $C_{\Sigma \Gamma_i}[S^d]$, these subgraphs either are disjoint from $\Gamma'$, contain $\Gamma'$, are contained in $\Gamma'$, or intersect $\Gamma'$ in a single vertex.
The only screens possibly affected are those involving $v$.  The screen spaces involving $v$ but neither $u$ nor $w$ are unaffected.  Those involving both $u$ and $w$ incur an involution just like the screen space for $\Gamma'$.  
This leaves the case of a biconnected subgraph $\Gamma''$ of $\Gamma'$ containing $v$, but only one of $u$ and $w$. By Lemma \ref{HiddenCornersLemma}, $\Gamma''$ is one of the edges $\{u,v\}$ and $\{v,w\}$.  In this case the screen spaces for those edges are interchanged.  
That is, such a corner indexed by $S$ is mapped to the corner indexed by $T$, where $S$ and $T$ differ by exchanging the edges $\{u,v\}$ and $\{v,w\}$.
\end{proof}

For $d$ odd, we ``fold'' $\SS(\Gamma_i, \Gamma')$ via the identification $x \sim \iota(x)$ for every biconnected $\Gamma' \subset \Gamma_i$ with a bivalent free vertex.  
If $\Gamma'$ has multiple bivalent free vertices, we arbitrarily choose one, again with no claim that different choices yield homeomorphic spaces. 
For $d$ even, let $\rho$ be an orientation-reversing diffeomorphism of some $S^{d-1}$ factor of $\widetilde{F}[\Gamma_i]$, such as a reflection in a hyperplane.  We then identify $x \sim \rho \circ \iota(x)$ for every biconnected $\Gamma' \subset \Gamma_i$ with a bivalent free vertex, possibly making arbitrary choices as we did for $d$ odd.

\begin{notation}
\label{HiddenFoldingNotation}
Let $h$ denote the equivalence relation given by folding hidden faces by an involution as above.
\end{notation}

\begin{remark}
Since we do not blow up all the diagonals of $(T_p S^d)^{V(\Gamma')}$ to get $C_{\Gamma'}[T_p S^d]$, the involution is well defined.  In particular, $\Delta_{\{v,u\}}$ and $\Delta_{\{v,w\}}$ are the only two-fold diagonals involving $v$ that are blown up.  If we had blown up all the diagonals, then the involution would not be defined on the whole configuration space.  Instead, it would only be defined on the configuration space of 3 points corresponding to $u,v,w$.  This latter approach can be taken in the setting of integration of forms in \cite{CCRL-AGT}, where integration over the remaining configuration points is performed before applying the involution to this 3-point configuration space.
\end{remark}

\subsection{Collapses for bivalent segment vertices with no incident edge}
\label{Collapses}
Finally, we want to ensure that for every remaining hidden face $\SS(\Gamma_i, \Gamma')$,  
\begin{enumerate}
\item
every free vertex in $\Gamma'$ has valence $\geq 3$, and 
\item
every segment vertex in $\Gamma'$ has an incident edge (i.e., is joined by an edge to another vertex in $\Gamma'$).  
\end{enumerate}
Condition (1) is ensured by the folding in Section \ref{HiddenFaceInvolution}; indeed, vertices cannot have valence 0 or 1, since $\Gamma'$ is biconnected.  Condition (2) is not yet satisfied, but we will collapse those faces $\SS(\Gamma_i, \Gamma')$ which violate it.  
This collapse amounts to forgetting a relative rate of approach.

Suppose $\Gamma'\subset \Gamma_i$ has a segment vertex $v$ with no incident edge.  The neighbors $u,w$ of $v$ on the segment must be in $\Gamma'$ because $\Gamma'$ is biconnected.  Let $L$ be the link over which $F[\Gamma_i]$ is the fiber of $X[\Gamma_i]\to \L^d_m$.
Recall that $\SS(\Gamma_i, \Gamma')$ can be described as a bundle over $F[\Gamma_i / \Gamma']$.  The fiber of this latter bundle, over a configuration where the basepoint of $\Gamma_i/\Gamma'$ is located at $p\in \R^d$,
is the subspace of $C_{\Gamma'}[T_p \R^d]/(\R^d \rtimes \R_+)$ with the segment vertices of $\Gamma'$ constrained to lie in $T_p L$.  This fiber records the relative rate of approach $|v-u|/|v-w|$.  We can forget this relative rate of approach without altering the associated spherical map (cf.~Section \ref{MapToSpheres} below).  In other words, we will forget $v$ from $C_{\Gamma'}[T_p \R^d]/(\R^d \rtimes \R_+)$, thus mapping $\SS(\Gamma_i, \Gamma)$ to a space whose fiber dimension is 1 lower.  Thus in $F[\Gamma_i]$, we have collapsed the codimension-1 face $\SS(\Gamma_i, \Gamma')$ to a codimension-2 face.  We perform such a collapse for every subgraph $\Gamma'$ of some $\Gamma_i$ with a segment vertex with no incident edge. 

\begin{notation}
\label{CollapseNotation}
Let $c$ denote the equivalence relation resulting from the collapses as above.\\
\end{notation}

\subsection{The fundamental class of the glued fiber}
\label{FundClass}
Recall the gluings $g$, the foldings $r$ and $h$, and the collapses $c$ from the previous Subsections.  (See Notations \ref{GluingNotation}, \ref{PrincipalFoldingNotation}, \ref{HiddenFoldingNotation}, and \ref{CollapseNotation}.)  
For any minimal cocycle $\gamma = \sum c_i \Gamma_i \in Z^*(\LD;\, \Z)$, define $\tF_\gamma := \coprod_i  c_i \tF[\Gamma_i]$, and 
define the final glued fiber $F_\gamma$ as
\[
F_\gamma := \coprod_i \left.  c_i \tF[\Gamma_i] \right/ (g, r, h, c)
\] 
by which we mean the transitive closure of these equivalence relations.
Since $\gamma$ is minimal, the glued fiber $F_\gamma$ is connected.  
The diffeomorphism types of the pieces $F[\Gamma_i]$ are independent of the vertex-labelings and edge-orientations in the representative expression for $\gamma$.  However, the homeomorphism type of $F_\gamma$ may depend on the arbitrary choices of gluings.
Letting the link in the base $\L^d_m$ vary, we have a bundle $\tX_\gamma \to \L^d_m$ with fiber $\tF_\gamma$ and a glued bundle $X_\gamma \to \L^d_m$ with fiber $F_\gamma$, which is the fiberwise quotient by a map $\psi: \tX_\gamma \to X_\gamma$. 
We will now define a subspace $\d F_\gamma$ such that $(F_\gamma, \d F_\gamma)$ will be a nicely glued manifold with corners as in Definition \ref{NicelyGluedMfdDef}.

\begin{definition}
\label{DegenerateLocusDef}
Fix a cocycle $\gamma = \sum c_i \Gamma_i \in Z^*(\LD;\, \Z)$.
Let $\d F_\gamma$ be the union of all the codimension-1 faces in the $\widetilde{F}[\Gamma_i]$ which have not been glued or folded.  
Let $\d X_\gamma$ denote the corresponding subspace in $X_\gamma$.  
We call $\d F_\gamma$ (or $\d X_\gamma$) the \emph{degenerate locus} of $F_\gamma$ (respectively $X_\gamma$).
\end{definition}

Every such remaining face is either
\begin{itemize}
\item
a principal face $\SS(\Gamma_i, e)$ of some $\widetilde{F}[\Gamma_i]$ such that $\Gamma_i/e$ has a multiple edge; or
\item
a hidden face $\SS(\Gamma_i, \Gamma')$ of some $\widetilde{F}[\Gamma_i]$ such that in $\Gamma'$, every free vertex has valence $\geq 3$, and every segment vertex has valence $\geq 1$; or
\item
a face at infinity of some $\widetilde{F}[\Gamma_i]$.
\end{itemize}
The converse is not true, since some hidden faces as in the list above were collapsed to higher codimension in Section \ref{Collapses}.  The upshot of the next Proposition is the existence of a fundamental class $[F_\gamma, \d F_\gamma]$, which by Lemma \ref{GluedMfdFundClass} allows us to define the pushforward in singular cohomology; moreover, by Lemma \ref{GluedIntegrationAndSSS}, this pushforward agrees with fiberwise integration.

\begin{proposition}
\label{FundClassProp}
The pair $(F_\gamma, \d F_\gamma)$ is a nicely glued manifold with corners.  
\end{proposition}

\begin{proof}
We just have to check that all the identifications are of the types listed in Proposition \ref{GluedMfdFundClass}.
Write each collapse or folding of a face as $\widetilde{S} \twoheadrightarrow S$ and write $\d S$ for the image of $\d \widetilde{S}$.  Let $k$ be the dimension of $F_\gamma$.  We view the identifications as being made in the following order.  

First apply the collapses $c$, as well any collapses of corners of principal faces that will be induced by the gluings $g$.  
Both types of collapses are given by forgetting relative rates of approach, which are locally standard submersions of Euclidean spaces.  (See Lemma \ref{ExtendingGlueToClosures} and Section \ref{Collapses}.)  Thus $S$ is a a manifold with corners of codimension at least 2.  
This yields the required homological condition in item (3) of Proposition \ref{GluedMfdFundClass}, that $S$ has the homology of an $(k-2)$-dimensional manifold with boundary in degrees $(k-2)$ and higher.

Next perform $h$ and $r$, the foldings of faces by orientation-reversing diffeomorphisms.  
We need to check in each case the hypothesis in item (2) of Proposition \ref{GluedMfdFundClass}, that $S$ has the homology of an $(k-1)$-dimensional manifold with boundary in degrees $(k-1)$ and higher.  Away from a fixed point of a hidden face involution, $h$ is a 2-to-1 local diffeomorphism.  Near a fixed point, $h$ is locally given by identifying points in a Euclidean space with corners with their reflections across a hyperplane.  So $S$ is an $(k-1)$-dimensional manifold with corners.  

The relation $r$ is locally the quotient of a Euclidean space with corners by some subgroup of permutations of blocks of coordinates of size $d-1\geq 2$.  Thus the fixed-point set has codimension at least 2.  Let $T$ be its image in $S$, and let $\eta(T)$ be the image of a tubular neighborhood of the fixed-point set.  The long exact sequence of the pair $(S, \eta(T) \cup \d S)$ and excision of a closed sub-neighborhood then show that $(S, \d S)$ satisfies the needed homological conditions.

Finally, once all the necessary collapses have been performed, the gluings $g$ of principal faces are certainly of type (1) as orientation-reversing diffeomorphisms.  
\end{proof}

\subsection{Mapping to spheres}
\label{MapToSpheres}
Recall that before gluing and folding, each piece $\widetilde{X}[\Gamma_i]:=X[\Gamma_i] \x (S^{d-1})^{a_i}$ of $X_\gamma$ had a map $\Phi_i$ to the $N$-fold cartesian product $(S^{d-1})^N$.  The gluing and folding do not quite respect these maps, but they will if we quotient by certain symmetries of this product.  We will define a subgroup $G$ of symmetries of $(S^{d-1})^N$ which will depend on the parity of $d$.  We will write $G_o$ and $G_e$ for the odd and even cases respectively, but we will simply write $G$ in contexts where the parity is not needed.  

Let $\alpha$ denote the antipodal map on $S^{d-1}$, and for each $j=1,...,N$, let $\alpha_j$ denote the antipodal map on the $j$-th factor of $S^{d-1}$ in $(S^{d-1})^N$.  For even $d$, fix an arbitrary reflection $\rho$ of $S^{d-1}$, and let $\rho_j$ denote this reflection on the $j$-th factor of $S^{d-1}$ in $(S^{d-1})^N$.  We let the symmetric group $\Sigma_N$ act on $(S^{d-1})^N$ by permuting the factors.

\begin{definition} \ 
\label{GDefinitions}
\setlength{\leftmargini}{4.5em}
\begin{itemize}
\item[($d$ odd)] Let $G_o$ be the subgroup of the group generated by $\Sigma_N$ and the $\alpha_j$ which acts by orientation-preserving diffeomorphisms on $(S^{d-1})^N$.
\item[($d$ even)] Let $G_e$ be the subgroup of the group generated by $\Sigma_N$ the $\alpha_j$, and the $\rho_j$ which acts by orientation-preserving diffeomorphisms on $(S^{d-1})^N$.
\end{itemize}
\end{definition}

Thus $G_o$ is generated by all permutations in $\Sigma_N$ and two-fold products $\alpha_j \alpha_k$.
Similarly $G_e$ is generated by all the antipodal maps $\alpha_j$, even permutations, and products of an odd permutation with a reflection $\rho_j$.  (Thus a product $\rho_j\rho_k$ of two reflections is also in $G_e$.)  

Let $\omega$ be a cochain which generates $H^{d-1}(S^{d-1}; \, \Z)$ and which is invariant under the antipodal map $\alpha$ and the reflection $\rho$.  Let $\omega_j$ denote the cochain on the $j$-th factor in $(S^{d-1})^N$.
Let $\omega^{(N)}$ be the class in $H^{N(d-1)}((S^{d-1})^N;\, \Z)$ corresponding to the cohomological cross product $\omega_1 \x \dots \x \omega_N$.

We first list the results to be proven in this subsection before giving their proofs:

\begin{proposition}
\label{MapToSpheresP}
For each cocycle $\gamma = \sum_i c_i \Gamma_i$, the map $\coprod_i c_i\Phi_{\Gamma_i} : \coprod_i c_i \widetilde{X}[\Gamma_i] \to (S^{d-1})^N$ descends to a continuous map of quotients $\Phi_\gamma: X_\gamma \to (S^{d-1})^N / G$ where $X_\gamma := \coprod_i \left. c_i \widetilde{X}[\Gamma_i] \right/ (g,h,r,c)$.  
\end{proposition}

\begin{proposition}
\label{GeneratorOfSpheres}
The quotient $q: (S^{d-1})^N \to (S^{d-1})^N /G$ induces an isomorphism  
\[
q^*: H^{N(d-1)}((S^{d-1})^N /G; \, \R) \overset{\cong}{\longrightarrow} H^{N(d-1)}((S^{d-1})^N; \, \R) \cong \R
\] 
given by including $G$-invariant forms.
It also induces an injection 
\[
q^*: \left.H^{N(d-1)}((S^{d-1})^N /G; \, \Z)\right/ \mathrm{Torsion} \incl H^{N(d-1)}((S^{d-1})^N;\, \Z) \cong \Z
\]
with image $r \Z$, where for $d$ odd, $r = |G_o|=N!\, 2^{N-1}$, and for $d$ even, $r = |G_e| = N!\, 2^{2N-2}$.
\end{proposition}
 
The above injectivity over $\Z$ means that $H^{N(d-1)}((S^{d-1})^N /G; \, \Z) \cong \Z \oplus \mathrm{Torsion}$. 
We do not make any assertions about the torsion subgroup above, e.g.~its nontriviality. 

The following Lemma has the most involved proof of this Subsection.
\begin{lemma}
\label{PositiveCodimension}
Let $\mathcal{R} \subset (S^{d-1})^N/G$ denote the image of $\d X_\gamma$ under $\Phi_\gamma$.  
\begin{enumerate}
\item
If $d \geq 4$, then $\mathcal{R}$ is a union of positive-codimension subspaces of $(S^{d-1})^N/G$.  
\item
If $d\geq 5$, then $\mathcal{R}$ has codimension at least two in $(S^{d-1})^N/G$.  
\end{enumerate}
\end{lemma}
By the long exact sequence of the pair $((S^{d-1})^N/G, \mathcal{R})$, this immediately implies the next result:

\begin{proposition}
\label{ClassInRelCoh}
For $d\geq 4$, the map
\[
H^{N(d-1)} \left((S^{d-1})^N/G, \mathcal{R}; \, \Z\right) \to 
H^{N(d-1)}\left((S^{d-1})^N/G; \, \Z \right)
\]
is surjective.
For $d \geq 5$, this map is an isomorphism.  The same is true with $\R$ coefficients.
\qed
\end{proposition}

The last result of this Subsection merely combines Propositions \ref{GeneratorOfSpheres} and \ref{ClassInRelCoh}.  So we may regard Proposition \ref{MapToSpheresP} and Corollary \ref{GeneratorOfSpheresCor} as the main results of this Subsection.

\begin{corollary}
\label{GeneratorOfSpheresCor}
The composition
\[
H^{N(d-1)} \left((S^{d-1})^N/G, \mathcal{R}; \, \Z\right) \to 
H^{N(d-1)}\left((S^{d-1})^N/G; \, \Z \right)
  \to H^{N(d-1)}\left((S^{d-1})^N; \, \Z \right)
\]
has image $r \Z$ where for $d$ odd, $r = N!\, 2^{N-1}$, and for $d$ even, $r = N!\, 2^{2N-2}$.  For $d\geq 5$, this composition is injective.  Over $\R$, this composition is surjective for $d=4$ and an isomorphism for $d\geq 5$.
\end{corollary}

To indicate how we will use these results, we let $[\omega^{(N)}]$ be the generator of $H^{N(d-1)}\left((S^{d-1})^N; \, \Z \right)$ determined by the orientation on $S^{d-1}$.  Let $[\omega^{(N)}_G]$ be a chosen preimage of $r [\omega^{(N)}]$ in the relative cohomology $H^{N(d-1)}\left((S^{d-1})^N/G, \mathcal{R}; \, \Z \right)$.  For $d \geq 5$, this choice of cohomology class is unique.  We will take the pushforward of  $[\omega^{(N)}_G]$ in singular cohomology with $\Z$ coefficients.  By the universal coefficient theorem, $\omega^{(N)}_G$ also gives rise to a nontrivial class over $\Z/p$. 

\bigskip

We now return to the proofs of the lemmas stated above.  

\begin{proof}[Proof of Proposition \ref{MapToSpheresP}]
We check that the identifications of faces by gluing, folding, and collapsing respect the map $\Phi_{\Gamma_i}$, up to the action of $G$.  

The fact that the gluing of principal faces of $\widetilde{X}[\Gamma_i]$ and $\widetilde{X}[\Gamma_j]$ respects the maps $\Phi_{\Gamma_i}$ and $\Phi_{\Gamma_j}$ in this way was established in part (2) of Lemma \ref{GluingInteriorsOfPrincipalFaces}, where we use our chosen reflection $\rho$ in the case of even $d$ to attain an orientation-reversing diffeomorphism of the principal faces such that the spherical maps agree up to orientation-preserving diffeomorphism.  The same property was established in Lemma \ref{FoldingPrinFacesLemma} for the folding of principal faces with orientation-reversing diffeomorphisms.

For an involution $\iota$ of a hidden face of $\widetilde{X}[\Gamma_i]$, 
$\Phi_{\Gamma_i}(x)$ and 
$\Phi_{\Gamma_i}(\iota(x))$
differ by a transposition two factors of $S^{d-1}$ and an antipodal map on both of these factors.  
For odd $d$, this map is orientation-preserving, hence an element of $G_o$.
For even $d$, it reverses orientation, but in this parity we folded hidden faces by $\rho \circ \iota$ rather than $\iota$, so again $\Phi_{\Gamma_i}(x)$ and 
$\Phi_{\Gamma_i}(\iota(x))$
differ by an orientation-preserving diffeomorphism, i.e.~an element of $G_e$.

Finally, the collapses in Section \ref{Collapses} respect $\Phi_{\Gamma_i}$ on the nose, since the relative rates of approach along the segment are inconsequential to the map $\Phi_{\Gamma_i}$.
\end{proof}

\begin{proof}[Proof of Proposition \ref{GeneratorOfSpheres}]
Consider the quotient $(S^{d-1})^N/G$.  It is known that if a space $Y$ has an action of a finite group $G$,  then for any field $\mathbb{F}$ of characteristic zero, $H^*(Y/G; \, \mathbb{F})$ maps isomorphically onto the $G$-invariant subspace $(H^*(Y; \, \mathbb{F}))^G$ \cite[Chapter III, Corollary 2.3]{Borel-Seminar},  \cite{Grothendieck-Tohoku}.  
Since $G$ preserves orientation, the top cohomology $H^{N(d-1)}((S^{d-1})^N /G; \, \mathbb{F})$ is one-dimensional, and the map to $H^{N(d-1)}((S^{d-1})^N; \, \mathbb{F})$ is an isomorphism.  This proves the first assertion.

By the universal coefficient theorem, $H^{N(d-1)}((S^{d-1})^N /G; \ \Z)$ has rank one.    
The naturality of the universal coefficient short exact sequence gives the claimed injectivity over $\Z$, modulo torsion.

This naturality also allows us to calculate the value of $r$ using de Rham cohomology, where de Rham cochains on the quotient $(S^{d-1})^N/G$ are $G$-invariant forms on $(S^{d-1})^N$.  Although the quotient by $G$ is not a smooth manifold, we can describe the degree of $q$ as for a map of smooth manifolds: the induced map on cohomology is multiplication by the signed count of preimages of a regular value.  
Since $r=|\mathrm{deg}(q)|$ and $\mathrm{deg}(q)=|G|$, it suffices to observe that $|G_o|=N!\, 2^{N-1}$ and $|G_e|=N!\, 2^{2N-2}$.
\end{proof}

\begin{proof}[Proof of Lemma \ref{PositiveCodimension}]
We check the statement for each of the three types of faces listed after Definition \ref{DegenerateLocusDef}.  First, for a principal face $\SS(\Gamma_i, e)$ such that $\Gamma_i/e$ has a multiple edge, the image of $\Phi_i$ in $(S^{d-1})^N/G$ is contained in the image in $(S^{d-1})^N/G$ of some diagonal in $(S^{d-1})^N$.
Since $G$ is finite, the codimension of this image is positive and in fact at least two for $d\geq 3$.

Now we consider the remaining faces, first addressing statement (1) and returning to statement (2) at the end of the argument.  
Since $G$ is finite and each face of $X_\gamma$ is just a face of some $X[\Gamma_i]$, it suffices to show that the images under the original spherical maps $\Phi_i$ have positive codimension in $(S^{d-1})^N$.
Now each face of $X[\Gamma_i]$ corresponds to a biconnected subgraph of $\Sigma\Gamma_i$.  
We consider four cases according to whether the collision involves some segment vertices, and to whether it involves $\infty$: Case I (no segment vertices, no $\infty$), Case II (no segment vertices, $\infty$), Case III (segment vertices, no $\infty$), and Case IV (segment vertices and $\infty$).
The Cases involving $\infty$ (II and IV) correspond to $\Sigma \Gamma'$ for a connected subgraph $\Gamma' \subset \Gamma_i$.  The other Cases (I and III) correspond to a biconnected subgraph $\Gamma' \subset \Gamma_i$.  These arguments use the descriptions of faces in terms of screens given in Section \ref{CornerDescription}, in just the case of codimension 1.

\emph{Case I}:
Let $\SS(\Gamma_i, \Gamma')$ be a hidden face of $X[\Gamma_i]$ where free vertices collide away from $\infty$.  Such a face corresponds to a biconnected $\Gamma'\subset \Gamma_i$ with no segment vertices in $\Gamma'$.  Then $\SS(\Gamma_i, \Gamma')$ is a bundle over $X[\Gamma_i/\Gamma']$.  Its fiber is 
$\overline{C}_{\Gamma'}[\R^d]:=C_{\Gamma'}[\R^d] / (\R^d \rtimes \R_+)$, a space of configurations modulo translation and scaling, which we think of as a space of infinitesimal configurations.
Since there are no segment vertices in $\Gamma'$, the standard framing on $\R^d$ can be used to trivialize this bundle.
Then the map 
\[
\xymatrix
{
\SS(\Gamma_i, \Gamma') \ar[r] \ar@{=}[d] & (S^{d-1})^{E(\Gamma_i)}  \ar@{=}[d]  \\
X[\Gamma_i/\Gamma'] \x \overline{C}_{\Gamma'}[\R^d] \ar[r] &
(S^{d-1})^{E(\Gamma_i/\Gamma')} \x (S^{d-1})^{E(\Gamma')}
}
\]
factors as a product, and it suffices to show that the image of $\overline{C}_{\Gamma'}[\R^d] \to (S^{d-1})^{E(\Gamma')}$ has positive codimension.
If $s$ is the number of (free) vertices in $\Gamma'$, then the dimension of the domain is $ds - d - 1$, and, by the valence conditions in Definition \ref{DegenerateLocusDef},
\[
(d-1)|E(\Gamma')| - (ds-d-1) \geq 
(d-1) \frac{3s}{2} - (ds-d-1)=
\frac{1}{2}(d-3)(s+2) + 4 >0.
\]

\emph{Case II}:
Next consider a face $\SS(\Gamma_i, \Gamma')$ where free vertices collide at $\infty$.  Such a face corresponds to a biconnected $\Sigma \Gamma' \subset \Sigma \Gamma_i$, with no segment vertices in $\Gamma'$.  
Then $\SS(\Gamma_i, \Gamma')$ is also given by a product bundle $X[\Gamma_i \setminus\Gamma'] \x \overline{C}_{\Sigma \Gamma'}[\R^d]$.  Indeed, the base $X[\Gamma_i \setminus\Gamma']$ is the subspace of $X[\Gamma_i / \Gamma']$ where the basepoint of $\Gamma_i/\Gamma'$ is fixed at $\infty \in S^d$; the fiber describes all collisions of points in $\Gamma'$ with $\infty \in S^d$.  Once again, we can factor the map 
\[
\xymatrix
{
\SS(\Gamma_i, \Gamma') \ar[r] \ar@{=}[d] & (S^{d-1})^{E(\Gamma_i)}  \ar@{=}[d] \\
 X[\Gamma_i \setminus \Gamma'] \x \overline{C}_{\Sigma \Gamma'}[\R^d] \ar[r] & 
 (S^{d-1})^{E(\Gamma_i \setminus \Gamma')} \x (S^{d-1})^{E(\Gamma')} 
}
\]
as a product, and it suffices to show that the image of $\overline{C}_{\Sigma \Gamma'}[\R^d] \to (S^{d-1})^{E(\Gamma')}$ has positive codimension.
If $s$ is the number of (free) vertices in $\Gamma'$, then the dimension of the domain of the latter map is $d(s+1) - d - 1=ds-1$, and
\[
(d-1)|E(\Gamma')| - (ds - 1) \geq
(d-1) \frac{3s}{2} - (ds-1) =
\frac{1}{2}(d-3)s +1>0.
\]

\emph{Case III}:
Let $\SS(\Gamma_i, \Gamma')$ be a hidden face not involving infinity, but with $\Gamma'$ having $r>0$ segment vertices, as well as $s$ free vertices.  Then $\SS(\Gamma_i, \Gamma')$ is a bundle over $X[\Gamma_i/\Gamma']$ whose fiber we denote $\overline{F}[\Gamma']$.  
A point in the base $X[\Gamma_i/\Gamma']$ is a link $L$ together with a configuration of the vertices of $\Gamma_i/\Gamma'$ in $\R^d$; let $p \in \R^d$ be the location of the basepoint of $\Gamma_i/\Gamma'$. 
Then $\overline{F}[\Gamma_i]$ is the subspace of $C_{\Gamma'}[T_p \R^d]$ where the segment vertices are constrained to lie on the line $T_pL$, modulo positive scaling and translations which preserve $T_pL$.  
Now there is a related bundle $S^{d-1} \rtimes \overline{F}[\Gamma_i]$ with the same fiber $\overline{F}[\Gamma_i]$ over $S^{d-1}$.  Its total space is given by allowing the oriented tangent line $T_pL$ to vary over all possible directions in $S^{d-1}$.  The space $\SS(\Gamma_i, \Gamma')$ maps to this bundle and in fact, as a bundle over $X[\Gamma_i/\Gamma']$, it is the pullback in the square below, where the lower horizontal map is given by the unit derivative of $L$ at the point $p$:
\[
\xymatrix{
\SS(\Gamma_i, \Gamma') \ar[r] \ar[d] & S^{d-1} \rtimes \overline{F}[\Gamma_i] \ar[d]\\
X[\Gamma_i/\Gamma'] \ar[r] & S^{d-1}
}
\]  
Since the composite $\SS(\Gamma_i, \Gamma') \to (S^{d-1})^{E(\Gamma_i)} \to (S^{d-1})^{E(\Gamma')}$ factors through $S^{d-1} \rtimes \overline{F}[\Gamma'] \to (S^{d-1})^{E(\Gamma')}$, it suffices to show that the image of this latter map has positive codimension.  The dimension of the domain is $(d-1)+ r+ds-2 = d+r+ds-3$, and
\[
(d-1)|E(\Gamma')| - (d+r+ds-3)
\geq (d-1) \frac{r+3s}{2} - (d+r+ds-3)
= \frac{1}{2}(d-3)(r+s-2)
\]
where the inequality is assured by the collapses in Section \ref{Collapses}.
Since $\SS(\Gamma_i, \Gamma')$ is a hidden face (not involving infinity), $r+s>2$.  Thus the right-hand side above is positive, provided $d>3$.

\emph{Case IV}:
Finally, we consider the case of a collision at $\infty$ along the link $L$.  So $\Gamma' \subset \Gamma_i$ has $r>0$ segment vertices and $s$ free vertices, and $\Sigma \Gamma_i$ is biconnected.  Much like in the case of a face at infinity with no segment vertices, such a face $\SS(\Gamma_i, \Gamma')$ is a product bundle $X[\Gamma_i \setminus \Gamma'] \x \overline{F}[\Sigma \Gamma']$.  Here the fiber $\overline{F}[\Sigma \Gamma']$ is a quotient of the subspace of $C_{\Sigma \Gamma'}[T_\infty S^d]$ where $\infty\in \Sigma \Gamma'$ is fixed, and where the segment vertices are constrained to lie on the subset $T_\infty L$; the quotient is by positive scaling.  Thus the dimension of $C_{\Sigma \Gamma'}[T_\infty S^d]$ is $r+ds-1$.
Now the map $\SS(\Gamma_i, \Gamma') \to (S^{d-1})^{E(\Gamma')}$ factors through $\overline{F}[\Sigma \Gamma'] \to (S^{d-1})^{E(\Gamma')}$.  We check that the image of the latter map has positive codimension.  In fact, since the domain has dimension $r+ds -1$, this codimension is at least
\[
(d-1)|E(\Gamma')| - (r+ds-1) \geq
(d-1) \frac{r+3s}{2} - (r+ds-1)=
\frac{1}{2}(d-3)(r+s) + 1 >0.
\]
This completes the proof of statement (1).  

For statement (2), we have already shown that the codimension is greater than 1 in every case except Case III.  In this case, $d\geq 5$ ensures that the codimension is at least 2, provided that the total number $r+s$ of vertices colliding is at least 4.  But the possibility that  $r+s=3$ is ruled out because in $\Gamma'$, every segment vertex has an incident edge (by Section \ref{Collapses}), every free vertex has valence at least 3 (by Section \ref{HiddenFaceInvolution}), and multiple edges are not allowed (by Section \ref{MinimalCocycles}).
\end{proof}

The proof of statement (1) above, Case (III), is the only place in our whole construction where we really need to require that $d>3$.  
For $d=4$, the proof of statement (2) fails.  In fact, it fails for the face corresponding to the collision of all the vertices in the second graph in expression (\ref{Type2Graphs}) for $d=4$, as noted in \cite[Remark A.12]{CCRL-AGT}.

\section{The Main Result}
\label{Results}
Having done most of the required work in the previous Section, we have almost established our main result.  We will  state it and give what remains of its proof in Section \ref{ProofMainThm}.  
In Section \ref{Classical}, we discuss the impediment to extending our construction to the classical case $d=3$, namely the anomalous faces.
We provide some examples and consequences in Section \ref{MainThmExamples}.  

\subsection{Proof of the main result}
\label{ProofMainThm}

We first review some ingredients needed to state and prove our main result.  
Fix an antipodally symmetric unit volume form $\omega$ on $S^{d-1}$.  Let $\omega^{(N)}$ be the corresponding representative of a generator of $H^{N(d-1)}((S^{d-1})^N)$.  It can be obtained via pullbacks by the $N$ projections and the wedge product.  Its image in singular cohomology is the $N$-fold cross product of the singular cochain corresponding to $\omega$.  
Recall that $q: (S^{d-1})^N \to (S^{d-1})^N /G$ is the quotient, where $G$ is a finite group that depends on the parity of $d$ but in either case acts by orientation-preserving diffeomorphisms.
Recall that $\mathcal{R} \subset (S^{d-1})^N/G$ is the image of the degenerate locus $\d X_\gamma$ under $\Phi_\gamma$.

By Corollary \ref{GeneratorOfSpheresCor}, there is a class $[\omega^{(N)}_G]$ in $H^{N(d-1)}((S^{d-1})^N/G, \mathcal{R};\,\Z)$ satisfying 
\begin{equation}
\label{OmegaNGCharacterization1}
q^*([\omega^{(N)}_G]) = r [\omega^{(N)}]
\end{equation}
where $r$ is a certain positive integer (also given in the statement of Theorem \ref{MainTheorem} below).
An alternative characterization of $[\omega^{(N)}_G]$ is that it maps to the generator $+1$ of the $\Z$ summand below, where the isomorphism comes from Proposition \ref{GeneratorOfSpheres} and where $+1$ is ultimately determined by the orientation on $S^{d-1}$:
\begin{equation}
\label{OmegaNGCharacterization2}
[\omega^{(N)}_G] \in
H^{N(d-1)}((S^{d-1})^N/G, \mathcal{R}; \, \Z) \to
H^{N(d-1)}((S^{d-1})^N/G; \, \Z) \cong \Z \oplus \mathrm{Torsion} \ni (+1,0)
\end{equation}
(This property illustrated by \eqref{OmegaNGCharacterization2} seems more useful than \eqref{OmegaNGCharacterization1} for generalizations beyond classes that can be realized over $\R$.)
The class $[\omega^{(N)}_G]$ is uniquely determined for $d\geq 5$ but not for $d=4$.
In either case, we can represent the class $q^*([\omega^{(N)}_G])$ by an antipodally symmetric de Rham cochain that is $G$-invariant and vanishes in a neighborhood of $\mathcal{R}$.  
Via the de Rham isomorphism, this form maps to a cochain in $H^{N(d-1)}((S^{d-1})^N/G, \mathcal{R}; \, \Z)$ (since its volume is $r \in \Z$).    
For each $N$ and each $d$, fix such a cochain representative $\omega^{(N)}_G$.
%, where we use the same symbol to denote both the de Rham form and its image as a singular cochain.
By Proposition \ref{MapToSpheresP}, we have a map of pairs $\Phi_\gamma:(X_\gamma, \ \d X_\gamma) \to ((S^{d-1})^N/G, \ \mathcal{R})$.

To state the theorem, we need to know the dimension of the fiber $F_\gamma$ of the bundle $X_\gamma \to \L^d_m$.  By construction it is that of $F[\Gamma_i]$ for $\Gamma_i$ with the maximum number of edges $N$ among graphs in $\gamma$.  Using the definition of defect $k$ and order $n$, one finds that $\dim F_\gamma = (3-d)n -k + N(d-1)$.

Proposition \ref{FundClassProp} showed that $(F_\gamma, \d F_\gamma)$ is a nicely glued manifold with corners, so by Lemma \ref{GluedMfdFundClass} it has a fundamental class $[F_\gamma, \d F_\gamma] \in  H_{\dim F_\gamma}(F_\gamma, \d F_\gamma;\, \Z)$, which induces an isomorphism $H^{\dim F_\gamma}(F_\gamma, \d F_\gamma;\, \Z) \to \Z$.
Thus there is a pushforward map  from the Serre spectral sequence, as in Section \ref{GluedMfdIntegration} and Lemma \ref{GluedIntegrationAndSSS}:
\[
\pi^!: H^{N(d-1)}(X_\gamma, \d X_\gamma;\, \Z) \to H^{(d-3)n+k}(\L^d_m;\, \Z)
\]

\begin{theorem}
\label{MainTheorem}
Let $\gamma = \sum_i c_i \Gamma_i \in Z^*(\LD; \,\Z)$ be an integer-valued, minimal graph cocycle. 
Then 
\[
\pi^!( \Phi_\gamma^*(\omega^{(N)}_G))=r \sum_i c_i I_{\Gamma_i}
\]
where the right-hand side is the configuration space integral associated to $\gamma$, and where $r=N!\, 2^{N-1}$ if $d$ is odd and $r=N!\, 2^{2N-2}$ if $d$ is even.  Moreover, for defect $k=0$, the cohomology of the graph complex injects into $H^{*}(\L^d_m;\, \Z)$.
\end{theorem}

Thus we realize integer multiples of the configuration space integral cohomology classes over $\Z$.
The bulk of the work was constructing $X_\gamma$ so that the pushforward is defined.  We related the pushforward to integration in Lemma \ref{GluedIntegrationAndSSS}, so it essentially remains to check that this Lemma  applies here.

\begin{proof}
For a cocycle $\gamma = \sum_i c_i \Gamma_i$, use the same notation as in Section \ref{FundClass}, so that the glued bundle $F_\gamma \to X_\gamma \to \L^d_m$ is a fiberwise quotient of the bundle $\tF_\gamma \to \tX_\gamma \to \L^d_m$ by the relations $g$, $h$, $r$, and $c$.  Let $\psi$ denote the quotient map $\tX_\gamma \twoheadrightarrow X_\gamma$.  The space $\tF_\gamma$ is a manifold with corners, and we write $\d \tF_\gamma$ for its boundary.  Not all of $\d \tF_\gamma$ is mapped into the degenerate locus $\d F_\gamma$, but $\d F_\gamma \subset \psi(\d \tF_\gamma)$.

The sum of configuration space integrals is then precisely 
\[
\sum_i c_i I_{\Gamma_i} = \int_{\tF_\gamma} \widetilde{\Phi}_\gamma^*(\omega^{(N)})
\]
where we write $\widetilde{\Phi}_\gamma$ for the smooth map $\tF_\gamma \to (S^{d-1})^N$ to distinguish it from the induced map on quotients $F_\gamma \to (S^{d-1})^N/G$.  If $q: (S^{d-1})^N\to (S^{d-1})^N/G$ is the quotient map, then $q \circ \widetilde{\Phi}_\gamma = \Phi_\gamma \circ \psi$.

We consider differential forms, de Rham cohomology, and integration of forms on $F_\gamma$ and $X_\gamma$ as described for glued manifolds in Section \ref{GluedMfdIntegration}.  The key was to define all these notions via the quotient map $\psi$.   We can also represent the cohomology of the quotient $(S^{d-1})^N / G$ relative to its subspace $\mathcal{R}$ in these terms.  
By Proposition \ref{GeneratorOfSpheres}, $q^*(\omega^{(N)}_G) = r \omega^{(N)}$ for $r$ as in the Theorem statement.  
By equation \eqref{StratifoldIntegration},
\[
\int_{F_\gamma} \Phi_\gamma^*(\omega^{(N)}_G) :=
\int_{\tF_\gamma} \psi^* \Phi_\gamma^*(\omega^{(N)}_G) =
\int_{\tF_\gamma} \widetilde{\Phi}_\gamma^*  q^*(\omega^{(N)}_G) =
\int_{\tF_\gamma} \widetilde{\Phi}_\gamma^* (r \omega^{(N)}) =
r \sum_i c_i I_{\Gamma_i}
\]
So it suffices to show that  $\pi^!(\Phi_\gamma^*[\omega^{(N)}_G]) = \int_{F_\gamma} \Phi_\gamma^*(\omega^{(N)}_G)$.  

By Proposition \ref{FundClassProp}, the pair $(F_\gamma, \d F_\gamma)$ of the fiber and its degenerate locus are a nicely glued manifold with corners.  
We can thus apply Lemma \ref{GluedIntegrationAndSSS} to $X=X_\gamma$ and $\tX = \tX_\gamma$.  
In our application of Lemma \ref{GluedIntegrationAndSSS}, the target pair of spaces $(Y,Z)$ is $((S^{d-1})^N/G, \mathcal{R})$.  The space $Y$ is not quite a manifold (because $G$ does not act freely), so while we can make sense of differential forms on it, we do not immediately have a de Rham map.  However, we can use $(S^{d-1})^N$ as a proxy in this regard.

More precisely, consider the commutative diagram below, which corresponds to diagram \eqref{IntVsPushfwd} in Lemma \ref{GluedIntegrationAndSSS}.
The surjections $q^*$ in cohomology are inclusions on the chain level, and the symbols $\int$ next to solid arrows denote standard de Rham isomorphisms.   The dashed arrow is discussed below.  As before, $H^*_{dR}(-)$ denotes de Rham cohomology, and $H^*(-;\, \R)$ denotes singular cohomology with $\R$ coefficients.
\begin{equation}
\label{MainThmDiagram}
\xymatrix@C2.5pc@R1.5pc{
H^{N(d-1)} _{dR}(X_\gamma, \d X_\gamma) \ar@{->}[d]_-{\int_{F_\gamma}} & 
\ar[l]_-{\Phi_\gamma^*} H^{N(d-1)}_{dR}((S^{d-1})^N/G, \mathcal{R})  \ar@{->>}[r]^-{q^*}_-{\cong(d\geq 5)} 
\ar@{-->}[ddd]_-{\int}^-{\cong (d \geq 5)}
& H^{N(d-1)}_{dR}((S^{d-1})^N)   \ar[ddd]_-{\int}^-\cong \\
H^{(d-3)n+k}_{dR}(\L^d_m) \ar[d]_-{\int}^-\cong & &\\
H^{(d-3)n+k}(\L^d_m; \,\R) & &\\
H^{N(d-1)}(X_\gamma, \d X_\gamma; \,\R) \ar[u]^-{\mathcal{I}} & 
\ar[l]_-{\Phi_\gamma^*} H^{N(d-1)}((S^{d-1})^N/G, \mathcal{R}; \,\R) \ar@{->>}[r]^-{q^*}_-{\cong(d\geq 5)}   & 
H^{N(d-1)}((S^{d-1})^N; \,\R)
}
\end{equation}
If $d \geq 5$, the maps $q^*$ are isomorphisms, so composing the isomorphisms clockwise around the right-hand  side of the diagram provides the vertical dashed arrow.
For $d=4$, the argument is similar.  We just recall that we represented $[\omega^{(N)}_G]$ by a cochain on $(S^{d-1})^N$ in both de Rham and singular cohomology, as discussed before the Theorem statement, and these were related by the de Rham map.  Thus the vertical dashed arrow is defined on the submodule generated by this cochain.
In conclusion, we establish $\pi^!(\Phi_\gamma^*[\omega^{(N)}_G]) = \int_{F_\gamma} \Phi_\gamma^*(\omega^{(N)}_G)$ for $d\geq 4$, completing the proof that the pushforward recovers integer multiples of configuration space integral classes.

Finally, the statement in the theorem about nontriviality in defect 0 follows from the fact that $\sum_i I_{\Gamma_i} = \mathcal{I}(\Phi_\gamma^*\omega^{(N)}))$ and the injectivity in cohomology of the configuration space integral classes in defect 0 \cite{CCRL-AGT}. 
\end{proof}

\begin{remark}
We consider the dependence of the resulting cohomology class 
$\pi^!(\Phi_\gamma^*(\omega^{(N)}))$
on the choices made in constructing the glued bundle $F_\gamma \to X_\gamma \to \L^d_m$:  

(1)  The choice of labelings of the $\Gamma_i$ representing $\gamma$ do not affect the diffeomorphism types of the pieces, but the homeomorphism type of $F_\gamma$ may depend on the choice of principal face gluings.  
This latter dependence does not however affect the resulting cohomology class in our construction, at least for $d$ odd: in any case it agrees with the integral of differential forms, which does not depend on the labelings of the graphs \cite[Theorem 4.2]{CCRL-AGT}.

(2)  For $d=4$, as in Proposition \ref{ClassInRelCoh}, there is also a choice of preimage of $[\omega^{(N)}]$ in the cohomology of $S^{N(d-1)}$ relative to the degenerate locus, and different choices may produce different cohomology classes via the fiber integration map.  
For $d \geq 5$, the choice in cohomology is uniquely determined by the orientation on $S^{d-1}$.
\end{remark}

\begin{remark}
\label{CoarseQuotient}
One could ask if the classes constructed in Theorem \ref{MainTheorem} can be realized using the cohomology of the coarser quotient $\tX_\gamma/\d \tX_\gamma$ rather than that of $X_\gamma/ \d X_\gamma$.  This was essentially the approach taken in our earlier work \cite{Rbo}.  In fact, they do lie in the image of the pushforward $H^{N(d-1)}(\tX_\gamma, \d \tX_\gamma;\,\Z) \to H^{(d-3)n+k}(\L^d_m;\, \Z)$.  One can prove this by considering an analogue of \eqref{IntegrationBySSS} for $(\tX_\gamma, \d \tX_\gamma)$, where $p+q = N(d-1)$ and $q=(d-3)n+k$:
\[
H^{p+q}(\tX_\gamma, \d \tX_\gamma;\, \Z)   \twoheadrightarrow 
{E}_\infty^{p, q} \overset{\cong}{\longrightarrow} 
{E}_2^{p, q} \cong  H^{p}(\L^d_m; \, H^q(\tF_\gamma, \d \tF_\gamma;\,\Z)) \twoheadrightarrow 
H^{p}(\L^d_m;\, \Z) 
\]
The inclusion ${E}_\infty^{p, q} \to {E}_2^{p, q}$ is an isomorphism because $(\tX_\gamma, \d \tX_\gamma)$ has the cohomology of a trivial bundle (as shown in an unpublished part of the author's 2010 PhD thesis, by including it into a trivial bundle and using a Serre spectral sequence argument).  The last map is surjective because $\tF_\gamma$ is a manifold with corners, so its top cohomology $H^q$ is free of rank $\pi_0(\tF_\gamma)$.

However,  $\tX_\gamma/ \d \tX_\gamma$ admits no analogue of $\Phi_\gamma$, so
this surjectivity just means that $\pi^!\Phi_\gamma^*(\omega^{(N)}_G)$ is {cohomologous} to the pushforward of some cochain representing a class in $H^*(\tX_\gamma, \d \tX_\gamma)$.  This caveat limits the utility of using the coarser quotient.
To illustrate this point, consider the pairwise linking number as the fiberwise integral over the trivial bundle $X=\L_2^3 \x (S^1 \x S^1)\overset{\pi}{\longrightarrow} S^1 \x S^1$ of the form $\Phi^*\omega$, where $\Phi: X \to S^2$ and $\omega$ represents a generator of $H^2(S^2)$.  Of course, for any open set $U$ in $S^1 \x S^1$, there is a cochain $\eta$ cohomologous to $\Phi^*\omega$ and supported in $\L_2^3 \x U$.  (In this analogy, $U$ corresponds to an open set disjoint from all the boundary faces of the $\tX_\gamma$.)  The pushforward $\pi^!(\eta)$ (or $\int_{S^1 \x S^1}\eta$) is the linking number.  However, a representative of this sort is unlikely to be informative for calculations.
\end{remark}

\subsection{Remarks on the case of classical knots and links}
\label{Classical}
For $d=3$, invariants of links are degree-0 cohomology classes in the space of links, and these classes come from the defect-0 part  $\LD^{0,n}$ of the graph complex, i.e.~from cocycles of (uni)trivalent graphs.
Our construction does not apply directly to $d=3$ because in the proof of Lemma \ref{PositiveCodimension}, the image under the spherical map $\Phi_\gamma$ of a face $\SS(\Gamma, \Gamma')$ of some $X[\Gamma]$ in Case III may be a codimension-zero subspace.  

We now characterize the $\Gamma'$ for which $\SS(\Gamma, \Gamma')$ is such a problematic face, called an \emph{anomalous face}. 
First, $\Gamma'$ is biconnected, and $|V(\Gamma')|\geq 3$.
Each vertex in $\Gamma'$ is trivalent, as guaranteed by Section \ref{Collapses}, so the graph $U(\Gamma')$ is a union of components of the graph $U(\Gamma)$ obtained by forgetting the segments.
The segment vertices of $\Gamma'$ must lie on only one segment, since the vertices in $\Gamma'$ have collided in $\SS(\Gamma, \Gamma')$.  So anomalous faces arise only for invariants that detect knotting, rather than linking.  For example, they do not arise for Milnor's homotopy invariants, finite-type invariants of pure braids, or Milnor's concordance invariants.
By adding certain collapses to our construction, we can assume that $U(\Gamma')$ is a single component of $U(\Gamma)$.  These collapses deal with disconnected $U(\Gamma')$ by forgetting certain relative rates of approach as in Section \ref{Collapses}; see 
 \cite[Section 5.2, Case 1]{DThurstonABThesis} and \cite[Section 4.2]{VolicBT} for the treatment in de Rham theory.
Finally, $\Gamma'$ cannot be crossed by another component $\Gamma''$ of $\Gamma$, for then vertices in $\Gamma''$ would be in the collision.  (That is, we cannot have segment vertices $i,j,k$ with $i,k \in V(\Gamma')$, $j \in V(\Gamma'')$ and $i < j < k$.)  One can restrict to primitive cocycles as in  \cite[Section 6]{DThurstonABThesis} and \cite{VolicBT}, and then the anomalous faces are precisely those involving the collision of all the points in a diagram $\Gamma$ on a knot.  

Poirier handles these faces and obtains a result similar to our Main Theorem, but for $d=3$ \cite[Proposition 1.11]{PoirierAGT}.  
Our construction does not readily handle the anomalous faces in general, though it can handle them in some special cases.
For example, for the type-2 cocycle in Example \ref{Type2Example}, the face of the tripod diagram $Y$ where all 4 points collide is a face $\SS \subset \d X_\gamma$ for which the proof of Lemma \ref{PositiveCodimension} does not guarantee that $\Phi_\gamma(\SS)$ has positive codimension.  However, on this face $\SS$, the three segment vertices are collinear, so the three vectors in  $\Phi_\gamma(\SS)$ are coplanar. Hence $\Phi_\gamma(\SS)$ indeed has positive codimension.

Our results and those of Poirier suggest that configuration space integrals may be related to counting formulae for finite-type invariants like the arrow diagram formulae of Goussarov, Polyak, and Viro \cite{GPV, PolyakViro-IMRN}.
Polyak and Viro establish such a formula by configuration spaces for the type-2 invariant \cite{PolyakViro-Casson}.
The rough idea to do find such formulae for an arbitrary finite-type invariant appears in the tinkertoy diagrams of D.~Thurston \cite{DThurstonABThesis}, though a complete exposition in the general case has not appeared, to our knowledge.
Even the values of finite-type invariants over $\Z/2$ (where one can ignore signs and directions of arrows) can be useful.  For example, the mod-2 values of the first four finite-type invariants separate the four simplest nontrivial knots, as shown in the Table in \cite{Stanford-mod2}.

\subsection{Basic examples}
\label{MainThmExamples}

\begin{example}[The order-2 cocycle]
\label{Type2Knots}
Consider the cocycle of order 2 for $d$ odd from Example \ref{Type2Example}.  This (signed) sum of two graphs corresponds to two (oriented) configuration spaces $X$ and $Y$, where $Y$ has the orientation opposite from the canonical one coming from the vertex labels.  There are three pairs of principal faces glued to each other, and each gluing identifies a face of $X$ with a face of $Y$.  
We thus obtain a $\Z$-valued cohomology class in the space of long knots in $\R^d$, $d$ odd.
Our construction in this example is similar to that of Polyak and Viro \cite{PolyakViro-Casson}.  The main difference is that they make a glued space involving 6 copies of each of $X$ and $Y$, whereas we take only one copy of each we quotient by some symmetries of the product of spheres.  
Although their construction is for long knots in $\R^3$, rather than $\R^d$, $d>3$, our construction for this example is otherwise the same (see the discussion of the anomalous face for the tripod in Section \ref{Classical}).  
We may also consider the cocycle (\ref{Type2GraphsEven}) for $d$ even.
\end{example}

\begin{example}[The triple linking number for long links]
For long links of 3 components in $\R^d$, $d$ odd, there is a cocycle of defect 0 and order 2:
\begin{align*}
+\quad \raisebox{-2pc}{\includegraphics[scale=0.3]{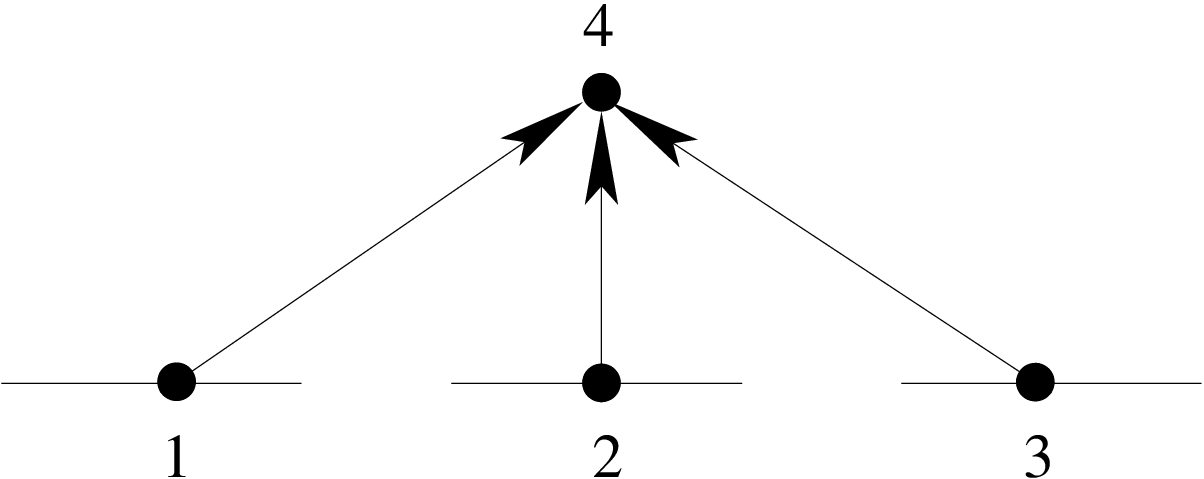}} \qquad - \qquad
\raisebox{-2pc}{\includegraphics[scale=0.3]{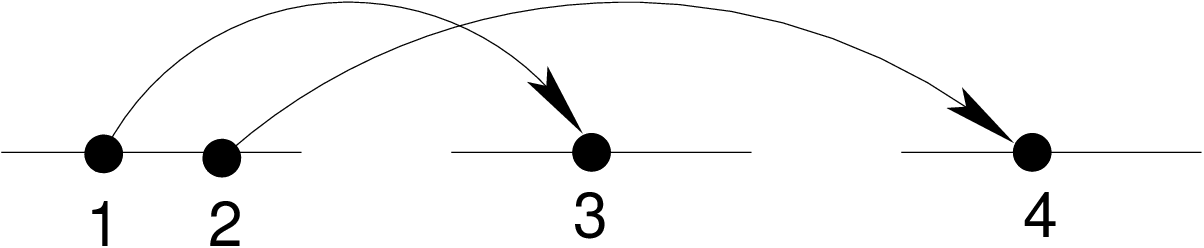}}  \\
-\quad \raisebox{-2pc}{\includegraphics[scale=0.3]{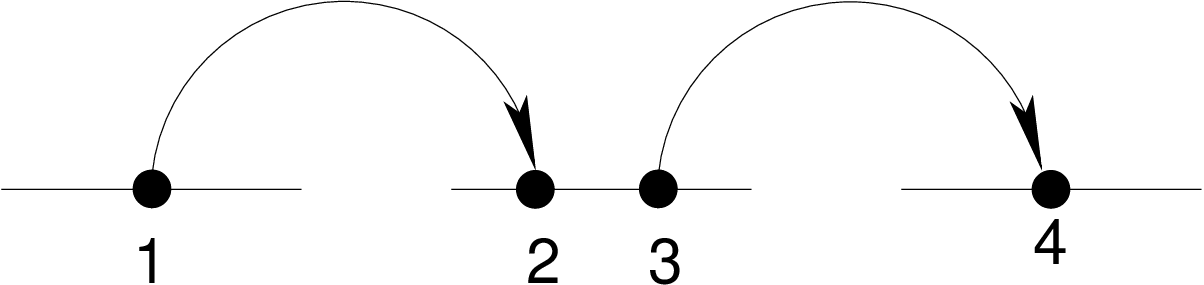}} \qquad - \qquad
\raisebox{-2pc}{\includegraphics[scale=0.3]{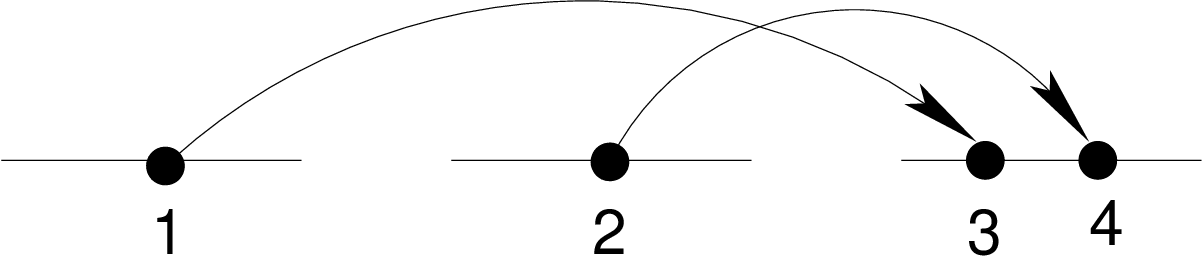}} 
\end{align*}
This cocycle gives rise via integration (and hence via our construction) to the Milnor triple linking number for long links.  Our present construction in the case of this cocycle yields the same glued space as in our previous work \cite{HoMilnor3ple}, after correcting that previous work for some sign errors.\footnote{These errors were due to an incorrect identification between the Lie orientation and the ``integration orientation,'' i.e., the vertex-labelings and edge-orientations.  See \cite{MilnorTrivalentTrees} for more details on this identification.}
Namely, we glue together four spaces $T, L, M, R$ corresponding to the four terms above.  The space $T$ has three principal faces, with each one glued to the principal face of $L,M$, or $R$.  There are no hidden faces with involution, so the remaining codimension-1 faces are just part of the degenerate locus.
\end{example}

We could write down further examples, coming from higher-order cocycles in defect 0.  For example, there is a cocycle of order 3 and defect 0 \cite[Section 5.1, Figure 3]{CCRL-AGT}.  Clearing denominators gives an integral cocycle, with six terms, so there are copies of six types of spaces glued together.  

Recall that in general, defect-0 cocycles are precisely cocycles of (uni)trivalent diagrams.  These correspond to finite-type invariants of knots and links, by the ``Fundamental Theorem of Finite-Type Invariants.''  
In Bar-Natan's paper \cite{BarNatanTopology}, defect-0 graph cocycles appear in the guise of functionals on the quotient of trivalent diagrams by the STU relation.
We may combine our construction and the nontriviality in defect 0 from Theorem \ref{MainTheorem} to yield the following result.  The analogue for closed links also holds, since our methods are easily adapted to that setting.

\begin{corollary}
For each $\Z$-valued finite-type invariant of $m$-component long links, $m\geq1$, we have a nontrivial integer-valued cohomology class in 
$\L^d_m$ 
for any $d>3$ odd.
\qed
\end{corollary}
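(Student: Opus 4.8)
The plan is to reduce the statement to Theorem \ref{MainTheorem} by way of the Fundamental Theorem of Finite-Type Invariants recalled just after Theorem \ref{FTFTI-Analogue}; the only real work is to produce a nonzero integer-valued graph cocycle of defect $0$ out of a $\Z$-valued invariant.

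First I would attach a rational cocycle to the invariant. Let $v$ be a $\Z$-valued finite-type invariant of $m$-component long links, and let $n$ be the least integer for which $v$ has type $n$. If $n=0$ then $v$ is constant and the assertion is trivial, so assume $n\geq 1$, so that $v$ has type exactly $n$. Its symbol (top weight system) is then a nonzero functional on the space of order-$n$ trivalent diagrams, and it takes integer values because it is computed as an alternating sum of $2^n$ values of $v$ on honest links. By the Fundamental Theorem---equivalently, by the duality over $\Q$ between degree-$n$ weight systems and the rational span of order-$n$ trivalent diagrams---this functional corresponds to a nonzero rational cocycle $\gamma_v\in Z(\D^0_n)\otimes\Q$. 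Fixing a $\Z$-basis of the free $\Z$-module $Z(\D^0_n)$ (see Section \ref{IntegralGraphComplex}) and clearing denominators, I obtain a positive integer $M$ for which $\gamma:=M\gamma_v=\sum_i c_i\Gamma_i$ is a nonzero integer-valued cocycle of defect $0$ and order $n$.

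Next I would invoke Theorem \ref{MainTheorem} for this $\gamma$. Since $\gamma$ has defect $k=0$, that theorem yields a nontrivial class in $H^{(d-3)n}(\L^d_m;\Z)$ when $d>3$ is odd and a nontrivial class in $H^{(d-3)n}(\L^d_m;\Z/2)$ when $d>3$ is even; this is the required cohomology class. The one delicate point is the passage from $v$ to $\gamma$: the Fundamental Theorem is naturally phrased over $\Q$, and the cocycle dual to an integer-valued weight system need not itself be integral, so one must clear denominators, at the harmless cost of rescaling the invariant by an integer. Once this is done, the nontriviality of the resulting class is immediate from the defect-$0$ case of Theorem \ref{MainTheorem}.
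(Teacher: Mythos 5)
Your proposal is correct and follows exactly the route the paper intends: the corollary is stated with \qed because its proof is just the preceding paragraph, namely the identification (via the Fundamental Theorem of Finite-Type Invariants / Bar--Natan) of finite-type invariants with defect-$0$ trivalent graph cocycles, followed by the defect-$0$ nontriviality clause of Theorem \ref{MainTheorem}. Your extra care in passing from the invariant to an \emph{integral} cocycle (taking the symbol and clearing denominators) is a reasonable filling-in of a step the paper leaves implicit; note only that since the symbol of a $\Z$-valued invariant is already integer-valued on the diagram basis, the corresponding cocycle is integral from the start and the rescaling by $M$ is not actually needed.
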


\begin{example}
Sakai found a cocycle of defect 1 in the graph complex for long knots in {odd}-dimensional Euclidean space, and he used configuration space integrals to produce real-valued cohomology classes in $\Emb(\R, \R^d)$ for $d$ odd.  This cocycle has nine terms (with coefficients $\pm 1, \pm 2$), so we refer the reader to \cite[Figure 10]{Sakai-AGT} for the precise expression.  Our construction produces $\Z$-valued cohomology classes in embedding spaces out of this cocycle, just as for defect-0 cocycles.  
\end{example}

\begin{example}
\label{LongoniExample}
Longoni found the following defect-1, order-3 cocycle in the space of closed knots in even-dimensional Euclidean space \cite{Longoni}:
\[
\Gamma \quad  = 
\quad 2\Gamma_1 \quad + \quad \Gamma_2 \qquad := 
\qquad 2 \qquad \raisebox{-3.1pc}{\includegraphics[scale=0.2]{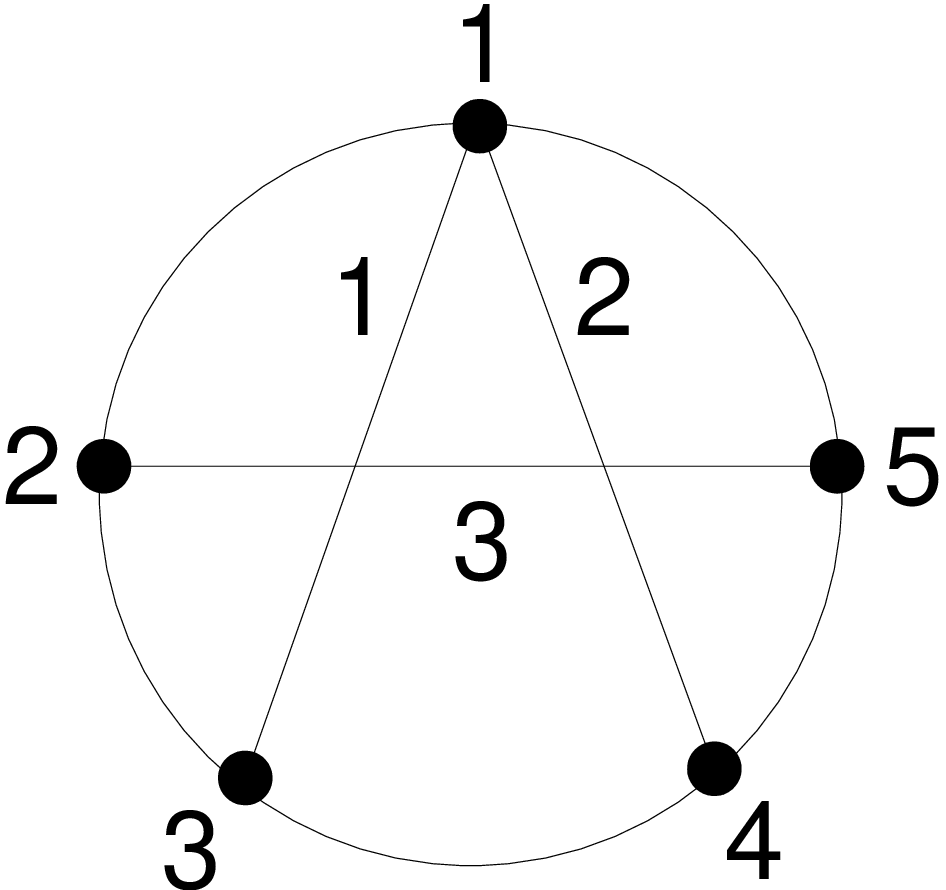}} \qquad + 
\qquad \raisebox{-4.1pc}{\includegraphics[scale=0.2]{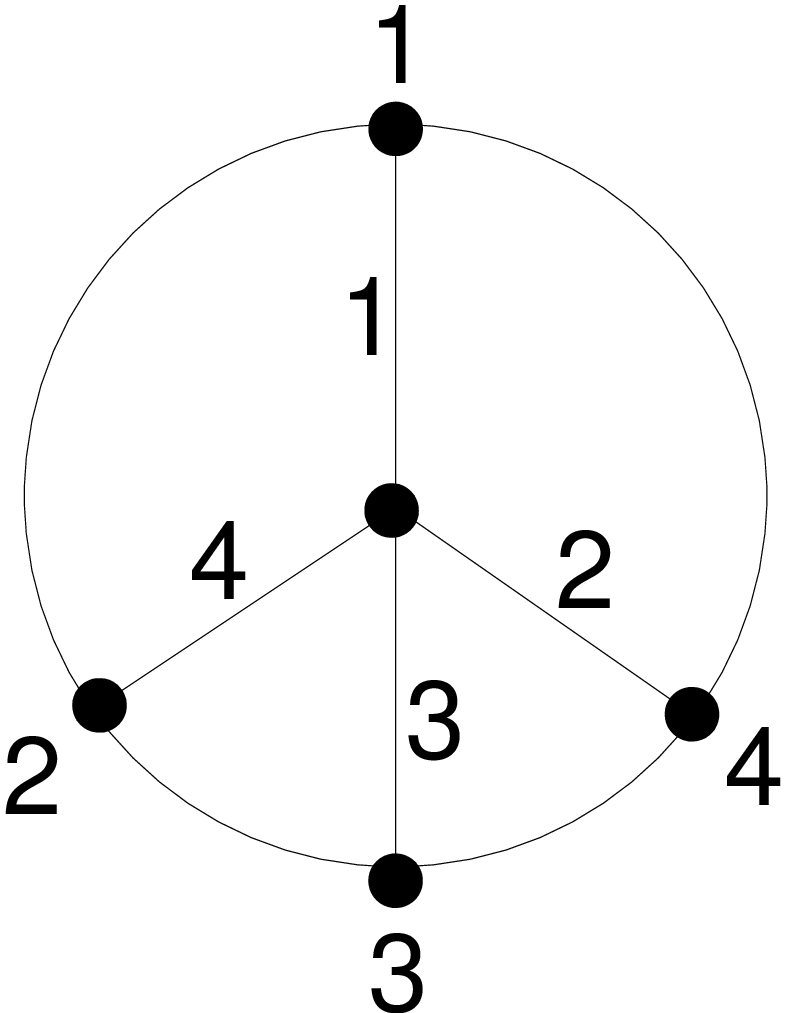}} 
\]
By readily adapting our methods from long links to closed links, we get from this cocycle a $\Z$-valued cohomology class in $\Emb(S^1, \R^d)$, $d$ even.
\end{example}

\section{Towards nontrivial torsion classes}
\label{NontrivialTorsion}
We now sketch the possibility of finding nontrivial torsion classes using our construction.  
For this entire Section, we omit the relation \eqref{GraphsWithOrRevAutoZero} on $\LD$.  That is, we do \emph{not} set graphs with orientation-reversing automorphisms to zero, for such graphs are precisely one possible source of nontrivial torsion classes.  Our main construction is thus to be carried out with the modification that we omit the foldings $r$ described in Section \ref{RemainingPrincipalFaces}.  
%We would also use the spherical class $\omega^{(N)}_G$ determined by \eqref{OmegaNGCharacterization2} rather than \eqref{OmegaNGCharacterization1}

The key ingredients would be work of Pelatt and Sinha on constructing homology classes and work of Turchin showing torsion cohomology classes in the graph complex.  
These torsion classes are known to exist only at the spectral sequence level, so it would be interesting to show that they survive as nontrivial classes in the cohomology of the space of knots.  
We summarize the salient points of both authors' work in Sections \ref{TurchinSection} and \ref{PelattSinhaSection}, and we consider a prototype for detecting nontrivial torsion in Section \ref{ToyModel}.

\subsection{Turchin's calculations in the cohomology of graph complexes}
\label{TurchinSection}
In \cite[Appendix B]{VictorBialg}, Turchin determined the cohomology of the graph cochain complex for long knots, over $\Z$, up to order $n=5$ and for all possible defects $k$ for these orders.  
We write $\KD^{k,n}$ for the cochain groups $\LD^{k,n}$ in this special case of $m=1$.
Among these groups are the following ones containing torsion:
\begin{align*}
\text{For $d$ even: } & & H^*(\KD^{2,4}) &\cong \Z/10  & & \text{For $d$ odd: } & H^*(\KD^{2,4}) \cong \Z/2 \\
& & H^*(\KD^{1,4}) &\cong \Z/2 \oplus \Z/2   & &  & H^*(\KD^{3,5}) \cong \Z/2 \\
& & H^*(\KD^{2,5}) &\cong \Z/2  & & & &
\end{align*}
Turchin represents cocycles in $\KD^{k,n}$ by chord diagrams (with $n$ chords and $2n-k$ segment vertices).  One can prove that this complex of chord diagrams agrees with our complex in cohomology;  the map between them is given by remembering only the chord diagram terms.  
Thus our construction produces torsion classes in $H^*(\Emb(\R, \R^d);\, \Z)$ out of these graph cohomology classes.

\begin{question}
\label{Nontriviality}
Do the torsion elements above survive as nontrivial classes in $H^*(\Emb(\R,\R^d))$ under our construction?
\end{question}

A positive answer to this question is suggested by the injectivity of our construction in defect 0 in our Main Theorem and the collapse at $E^2$ of the Vassiliev spectral sequence over $\Q$ for knots in $\R^d, d\geq 4$, proven by Lambrechts, Turchin, and Voli\'{c} \cite{LTV}.  That is, such an answer would generalize their result for all non-torsion classes to include also several torsion classes.  Furthermore, one may ask whether the Vassiliev spectral sequence collapses at $E^2$ over $\Z$ or $\Z/p$, and we hope the methods sketched below may provide an answer. 

At present, a proof of this conjecture is hindered by the lack of explicit formulae for a full graph cocycle $\gamma$ representing a class as above, or even just the chord diagram terms of $\gamma$.  
The next two subsections suggest how we might detect nontriviality, if we had either explicit formulae for graph cocycles or a broad framework for constructing dual homology classes.

\subsection{Pelatt and Sinha's homology class in the space of knots}
\label{PelattSinhaSection}
Pelatt and Sinha constructed nontrivial homology classes over $\Z$ in spaces of long knots in $\R^d$ for $d$ even \cite{Pelatt}.  
Their methods apply in either parity.  
In fact, their work only uses $d$ even in that it studies Longoni's graph cocycle $\Gamma = \Gamma_1 + \Gamma_2$ mentioned in Example \ref{LongoniExample}.  Pelatt and Sinha constructed a dual homology class in the space of long knots from the bracket expression 
\[
\beta_1 + \beta_2:=[[x_1,x_4],x_3][x_2,x_5] + [x_1,x_4][[x_2,x_5],x_3],
\]
which by work of Sinha \cite{SinhaTop} corresponds to a cycle in the homology Vassiliev spectral sequence.  They define two families $\mathcal{M}_1$ and $\mathcal{M}_2$ of resolutions of singular knots, with singularities prescribed by each $\beta_i$.  A factor in $\beta_i$ with $k$ terms $x_j$ in nested brackets corresponds to a $k$-fold self-intersection.  
Thus each $\mathcal{M}_i$ comes from a singular knot with one triple-point and one double-point. 
Systematically gluing together the boundaries of the $\mathcal{M}_i$, they get a homology class $[\mathcal{M}] \in H^*(\Emb(\R, \R^d))$.  The configuration space integral $I_\Gamma$ of $\Gamma$ is then paired with $[\mathcal{M}]$, and a nonzero result establishes the nontriviality of $[\mathcal{M}]$, as well as that of $I_\Gamma$.  The fact that $\langle I_\Gamma , [\mathcal{M}] \rangle \neq 0$ is proven by checking that the non-chord-diagram term $I_{\Gamma_2}$ vanishes on all of $\mathcal{M}$, while $I_{\Gamma_1}$ incurs a nonzero contribution only from the piece $\mathcal{M}_1$ corresponding to $\beta_1$, which prescribes exactly the singularity data detected by $\Gamma_1$.  The calculation of the pairing, though written via integrals in \cite{Pelatt}, relies only on a signed count of preimages and is thus amenable to detecting torsion classes.

\subsection{A toy model example for nontrivial 2-torsion.}
\label{ToyModel}
We now present an example of a torsion cocycle, which is of lower order than the Longoni cocycle.
We cannot as of yet realize it in the space of knots, as we explain below, but it serves to further illustrate how we may detect nontrivial torsion.
Consider the following graph $\Gamma^1_2$ of order 2 and defect 1.  As suggested by Turchin's calculations, it is a cocycle in a related but different graph complex, namely a complex for the space $\overline{\Emb}(\R, \R^d)$ of long embeddings modulo immersions, i.e.~$\overline{\Emb}(\R, \R^d):= \mathrm{hofib}(\Emb(\R, \R^d) \to \mathrm{Imm}(\R, \R^d))$.  
\[
\Gamma_2^1\quad := \quad \raisebox{-1pc}{\includegraphics[scale=0.3]{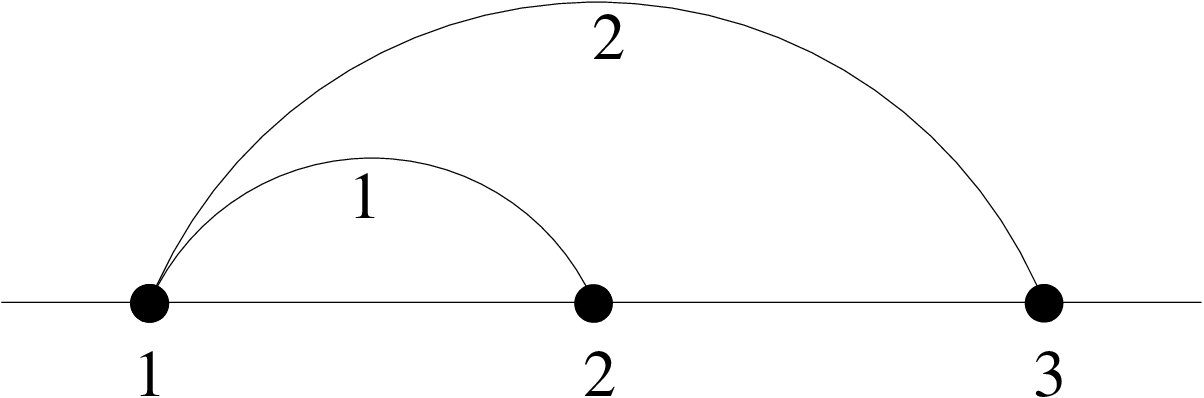}}
\]
The differential applied to the sum of the three order-2, defect-0 chord diagrams (with appropriate signs) yields $2 \Gamma_2^1$, so $\Gamma_2^1$ generates a subgroup isomorphic to $\Z/2$ in the cohomology of the graph complex for $\Embbar(\R, \R^d)$.
The graph $\Gamma_2^1$ is however not closed in the graph complex $\KD^{*,*}$ for $\Emb(\R, \R^d)$.  

Our construction does not immediately apply to $\overline{\Emb}(\R, \R^d)$.  Indeed, if one views an element in this homotopy fiber as an embedding together with a path through immersions to the standard unknot, the evaluation map in the pullback square \eqref{GammaBTsquare} is not well defined.  
Nonetheless, if we imagine that we could produce a cohomology class $\pi^! \Phi_{\Gamma^1_2}^*(\omega^{(2)}_G)$ out of $\Gamma_2^1$, 
we would pair it with a homology class corresponding to the bracket expression $\beta:=[[x_3,x_1],x_2]$. 
%$\alpha_1 + \alpha_2:= [[x_1,x_2],x_3] + [[x_2,x_3],x_1]$.  
%USE TWO TERMS (just for the HOMOLOGY class)?
In fact, one can check that $\beta$ is a cycle, under the differential $d_1$ given in \cite[Section 2]{Pelatt}.

We would construct a homology class $[\mathcal{M}_\beta]$ out of a family of embeddings $\mathcal{M}_\beta$, by starting with an immersion $f$ with one triple-point, where the three strands involved have linearly independent tangent vectors.  Resolving the triple-point by pushing off one strand in all possible directions gives an $S^{d-2}$ family of knots with one-double point, except at four points in $S^{d-2}$, where the knot has two double-points.  
The four points can be partitioned into pairs according to the singularity data: one pair has the chord diagram X from \eqref{Type2GraphsEven}, while the other has a ``two-striped rainbow,'' i.e.~chords joining points 1 and 4 and points 2 and 3.  (In the setting of long knots, there are three chord diagrams with two chords, but we can consider only these two,by making an appropriate choice of which of the three strands to move when resolving the triple-point.  Namely, we choose the ``middle'' strand in terms of the order in the domain $\R$ of $f$.)   
The resolutions of the extra double point can thus be glued pairwise.
The remaining double-point in all of the diagrams can be resolved via an $S^{d-3}$ family of embeddings.  The overall result is a family of knots $\mathcal{M}_\beta$ parametrized by a manifold with no boundary, which thus represents a homology class $[\mathcal{M}_\beta]$.  Its dimension is $(d-2) + (d-3)=2d-5$.

We would finally check that $\langle \pi^! \Phi_{\Gamma^1_2}^*(\omega^{(2)}_G), [\mathcal{M}_\beta] \rangle = \pm 1$.  
Indeed, $\pi^! \Phi_{\Gamma^1_2}^*(\omega^{(2)}_G)$ counts preimages under the Gauss map $\Phi_{\Gamma_2^1}$, as in \eqref{BigPhi}, and $\mathcal{M}_\beta$ is constructed to have exactly one such preimage.
%(and no other diagrams are needed to complete this graph to a cocycle)

If this construction 
%for $\overline{\Emb}(\R, \R^d)$ 
were possible, it would yield a class $[\mathcal{M}_\beta] \in H^{2d-5}(\overline{\Emb}(\R, \R^d))$ for even $d\geq 4$.  Since $\overline{\Emb}(\R, \R^d) \simeq \Emb(\R, \R^d) \x \Omega^2 S^{d-1}$ \cite[Proposition 5.17]{SinhaOperads} and since $\Gamma_2^1$ is not a cocycle in the complex for $\Emb(\R, \R^d)$ itself, one might ask if the linear dual to $[\mathcal{M}_\beta]$ comes from $\pi_{2d-3}(S^{d-1})$ via the Hurewicz map.  That is, after relabeling, is it related to $\pi_{2j-1}(S^j)$ for $j\geq 3$ odd?  Notably, $\pi_{2j-1}(S^j)$ has 2-torsion (in fact a $\Z/2$ summand) for every odd $j\geq 3$ at least up to $j=19$ \cite{TodaCompositionMethods}.
\bigskip

As a final remark on this direction, possible approaches to Question \ref{Nontriviality} include finding explicit formulae for torsion classes in low degrees, as well as the general setting of the Vassiliev spectral sequence over $\Z$ or $\Z/p$.
Our construction provides cohomology classes in spaces of links for all graph cohomology classes.
A potentially fruitful approach for finding homology classes to pair these with (either in low degrees or in general) may be to exploit the pairing of graphs and trees in the context of the homology and cohomology of configuration spaces \cite{Sinha-Graphs-Trees}.

\section{Variants of the main construction over $\Z/p$}
\label{ModPSection}
Our construction can be modified to produce classes over $\Z/p$ which are not necessarily mod-$p$ reductions of the classes over $\Z$ from our Main Theorem.
If we let $p$ be prime and consider the graph cochain complex over $\Z/p$, we get $\Z/p$-vector spaces generated by graphs, with a differential given by edge contractions with the same signs as over $\Z$, but reduced modulo $p$.    A trivial variant of our construction arises from taking mod-$p$ reductions of integer-valued cocycles; a possible upshot is that computation over these $\Z/p$-vector spaces may be easier than over $\Z$.
There are however more nuanced variants.  First of all, as mentioned in Section \ref{NontrivialTorsion}, we should drop the relation \eqref{GraphsWithOrRevAutoZero}, which kills some classes that are nontrivial 2-torsion over $\Z$.  
Second, while mod-$p$ reduction gives a map on cocycles  $Z^*(\LD;\, \Z) \to Z^*(\LD;\, \Z/p)$, it is not surjective, as shown by Example \ref{DogolazkyExample} below for $p=2$.\footnote{The mod-$p$ reduction map is however injective on the subset of minimal cocycles, as guaranteed by condition (2) in the definition of minimality (see Definitions \ref{SimplifiedAndMinimalDefs}).}
We discuss the case $p=2$ in Section \ref{Mod2Subsection} and the case of odd $p$ in Section \ref{ModPSubsection}.

\subsection{The case of $p=2$}
\label{Mod2Subsection}  
Suppose $\gamma= \sum c_i \Gamma_i \in Z^*(\LD;\,\Z/2)$ is a mod-2 cocycle.  That means that every isomorphism class of principal face in the associated union of configuration space bundles $\coprod_i c_i \tX[\Gamma_i]$ appears an even number of times.  Topologically, we can glue these faces in pairs, without regard to orientation.  The remaining faces can be folded, collapsed, or relegated to the degenerate locus, as in Sections \ref{HiddenFaceInvolution}, \ref{Collapses}, and \ref{FundClass}.  The resulting pair $(X_\gamma, \d X_\gamma)$ will have a fundamental class over $\Z/2$, as in Propositions \ref{GluedMfdFundClass} and \ref{FundClassProp}.  

It will also have a well defined map $\Phi_\gamma$ to a quotient $Y:=(S^{d-1})^N/G'$ by the action of a group $G'$ generated by some permutations, antipodal maps, and reflections.  (We name it $G'$ to avoid confusion with $G=G_o,G_e$ given in Definition \ref{GDefinitions}, though certainly we may consider $G'=G$.)  
The group $G'$ is thus required to satisfy the analogue of Proposition \ref{MapToSpheresP}.  Ideally, it should be the smallest group for which this analogue holds, as explained below, and we will not specify $G'$ any further.   The  analogue of Proposition \ref{GeneratorOfSpheres} with $G$ replaced by $G'$ holds, using the same proof.  A  generator $\omega^{(N)}_{G'}$ of the $\Z$ summand of $H^{N(d-1)}((S^{d-1})^N/G'; \, \Z)$ reduces modulo 2 to a nontrivial class.
Let  $\omega^{(N)}_{G',2}$ be the lift of this mod-2 class to a relative class in $H^{N(d-1)}((S^{d-1})^N/G', \mathcal{R}; \, \Z/2)$.  The following result is thus a straightforward adaptation of the Main Theorem.

\begin{proposition}
Let $G'$ and $\omega^{(N)}_{G',2}$ be as defined above.
Let $\gamma = \sum_i c_i \Gamma_i \in Z^*(\LD^{k,n}; \,\Z/2)$.
\begin{enumerate}
\item
There is a class $\pi^! (\Phi_\gamma^*(\omega^{(N)}_{G',2})) \in H^{n(d-3)+k}(\L^d_m;\, \Z/2)$ obtained via the pullback $\Phi_\gamma^*$ followed by pushforward $\pi^!$ from the Serre spectral sequence.  
\item
If $\gamma$ is the mod-2 reduction of a cocycle over $\Z$ and $G$ is the group defined in Definition \ref{GDefinitions}, then $\pi^! (\Phi_\gamma^*(\omega^{(N)}_{G',2}))$ is the mod-2 reduction of $r$ times the configuration space integral class associated to $\gamma$, where $r=N!\, 2^{N-1}$ if $d$ is odd and $r=N!\, 2^{2N-2}$ if $d$ is even.
\qed
\end{enumerate}
\end{proposition}

We next describe a potential pitfall of using too large a $G'$ while working modulo $p$ in the case $p=2$.  The fact that one can ignore orientations modulo 2 may lead one to take $G'$ to be $\Sigma_N \wr \Z/2$, the group generated by all permutations and antipodal maps.  That is, one could try to set $Y=\mathrm{Sym}^N(\R \mathrm{P}^{d-1})$, since in this case $H^{N(d-1)}(Y;\, \Z/2)$ has nontrivial cohomology.\footnote{The $N$-fold cross product of a generator of $H^{d-1}(S^{d-1};\, \Z/2)$ corresponds to a generator of $H^{N(d-1)}(Y;\, \Z/2)$.  This can be deduced from a theorem of Nakaoka \cite[Corollary 3]{Nakaoka1959} on symmetric powers of spheres, using the $N$-fold product of the identity Steenrod operation, and then passing from spheres to projective spaces by the Serre spectral sequence.}
The problem is that the map $X_\gamma \to Y$ may potentially factor through a map $X_\gamma \to Z$, where $Z = (S^{d-1})^N/H$ and $H<G'$.  The map $H^*(Y;\,\Z/2) \to H^*(Z;\,\Z/2)$ is given by multiplication by $|G'/H|$, so the map $X_\gamma \to Y$ will be identically zero in $\Z/2$ cohomology if the index of $H$ is even.  For example, if we take $d$ odd and attempt the same construction as in the Main Theorem, but over $\Z/2$ and with $G'=\Sigma_N \wr \Z/2$, every resulting class will be zero, since  $\Phi_\gamma$ factors through the quotient by $G_o$, which has index 2 in $\Sigma_N \wr \Z/2$.

This consideration raises the question of whether the factors $r_{\mathrm{odd}}=|G_o|=N!\,2^{N-1}$ and $r_{\mathrm{even}}=|G_e|=N!\,2^{2N-2}$  in Theorem \ref{MainTheorem} are sharp.  That is, for a given cocycle, is there a modification of the construction where one uses a smaller group $G'$?  
Notice that the existence of a nontrivial class over $\Z/2$ by the above method with $|G'| \equiv 0 \mod 2$  would not immediately produce a contradiction.  Indeed, there are maps 
\[
X_\gamma \overset{\Phi_\gamma}{\longrightarrow} (S^{d-1})^N/G \overset{q}{\longleftarrow} (S^{d-1})^N
\]
but no obvious factoring of $\Phi_\gamma$ through $q$.  (The construction in de Rham theory is roughly such a factoring, but not over $\Z$.)  Being able to carry out the construction with a given group $G'$ can rule out nontriviality over $\Z/2$ of the construction with any larger group.  Equivalently, finding a nontrivial class over $\Z/2$ using a given group $G'$ would establish the sharpness of $G'$.  Although we do not need orientations to get a fundamental class over $\Z/2$, it is clear that the labelings of the graphs should be useful in finding a sharp bound for the group $G'$.

We conclude this Subsection with one explicit potential source of classes over $\Z/2$ that would not come from classes over $\Z$:

\begin{example}
\label{DogolazkyExample}
In defect 0, there is a graph cocycle of order 5 over $\Z/2$ which is not the mod-2 reduction of a cocycle over $\Z$.  This cocycle was found by Dogolazky \cite{Dogolazky} and further studied by Stanford \cite{Stanford-mod2}.
The chord diagram terms in this cocycle are
\[
\raisebox{-3.5pc}{\includegraphics[scale=0.3]{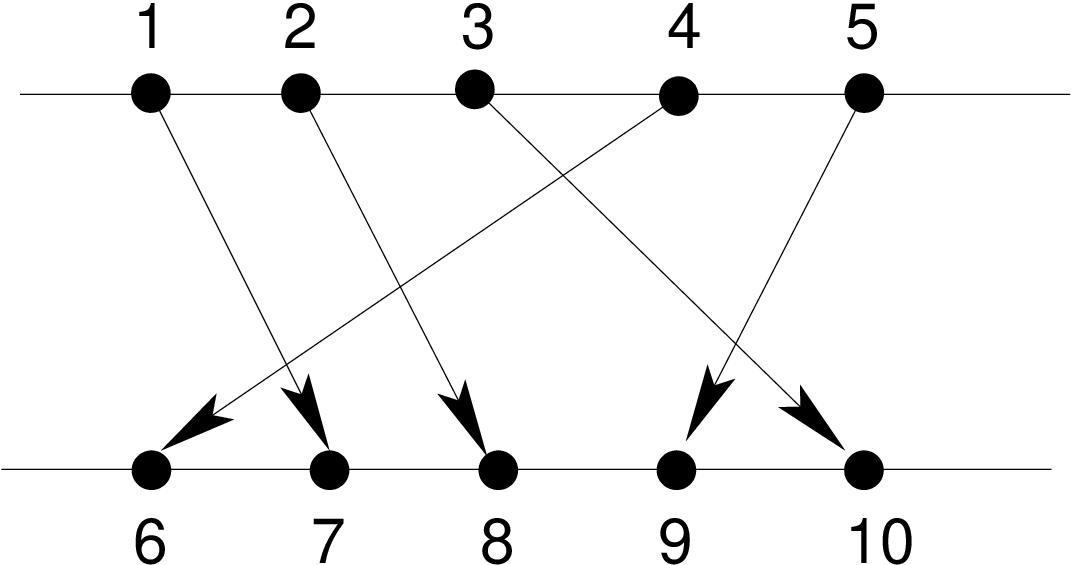}} \qquad + \qquad
\raisebox{-3.5pc}{\includegraphics[scale=0.3]{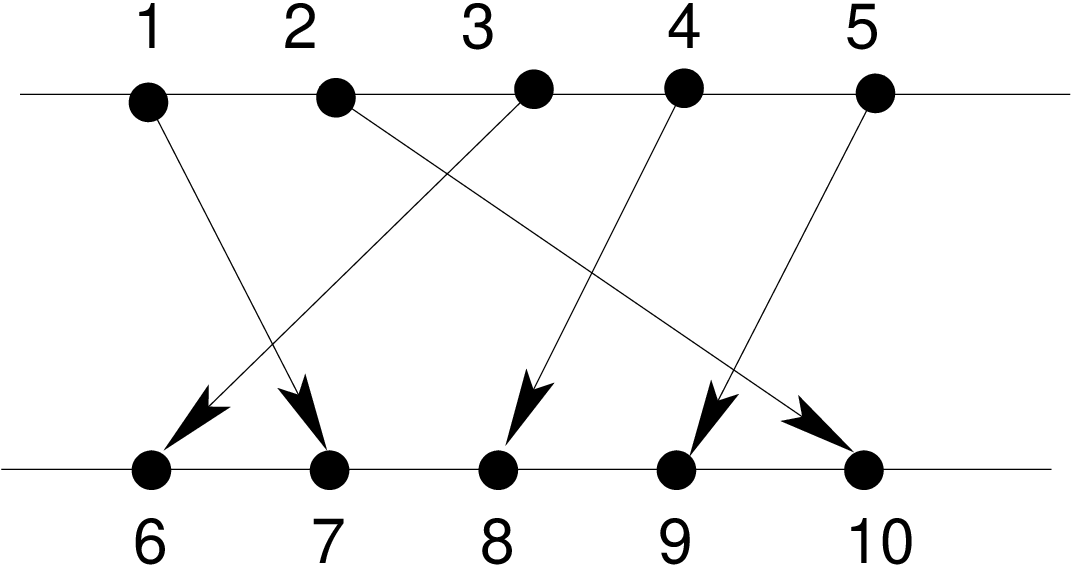}}
\]
To find the explicit expression for the corresponding graph cocycle, one would use the STU relation repeatedly to determine the coefficient for every (uni)trivalent graph on two segments with ten vertices.  
Stanford showed that the cocycle described above ``integrates'' to zero, by considering topological relations that an invariant giving rise to it would have to satisfy.  It would be interesting to see if our construction (applied with a suitably small group of symmetries $G'$) produces a nontrivial cohomology class in $H^*(\Emb(\R \sqcup \R, \R^d);\,\Z/2)$ for odd $d \geq 3$.
\qed
\end{example}

\subsection{The case of odd primes $p$}
\label{ModPSubsection}
For odd $p$, we can also modify our construction to give classes over $\Z/p$ that need not be mod-$p$ reductions of classes over $\Z$.  In this case, we would identify a principal face with \emph{all} the other principal faces in its unoriented isomorphism class, as in the construction of Kuperberg and Thurston \cite{KuperbergThurston}.  
For $p\neq 2$, signs matter, so we certainly need to keep track of orientations of graphs.
Though we glue multiple principal faces at a time, we treat the signs exactly as we did over $\Z$ in Section \ref{GluingSubsection} and in particular Lemma \ref{GluingInteriorsOfPrincipalFaces}.
The folding, collapsing, and relegation of the degenerate locus of the appropriate faces proceed exactly as in Sections \ref{HiddenFaceInvolution}, \ref{Collapses}, and \ref{FundClass}.  
For a mod-$p$ cocycle $\gamma$, the resulting space $X_\gamma$ has a fundamental cycle relative to the degenerate locus $\d X_\gamma$ because the boundary contribution at each glued face will be a multiple of $p$.
We map $X_\gamma$ to a quotient $Y=(S^{d-1})^N/G'$, where $G'$ is a group that is as small as possible, as in the case of $p=2$.  Again, a group that is too large will result in only trivial classes.
As over $\Z/2$, we can find a generator $\omega^{(N)}_{G'}$ of $H^{N(d-1)}((S^{d-1})^N/G'; \, \Z)$ that reduces to a nontrivial class over $\Z/p$.  Let $\omega^{(N)}_{G',p}$ be the lift of this class to a class in $H^{N(d-1)}((S^{d-1})^N /G, \mathcal{R}; \, \Z/p)$.

\begin{proposition}
Let $G'$ and $\omega^{(N)}_{G',p}$ be as defined above.
Let $\gamma = \sum_i c_i \Gamma_i \in Z^*(\LD^{k,n}; \,\Z/p)$.  Then there is a class $\pi^! (\Phi_\gamma^*(\omega^{(N)}_{G',2})) \in H^{n(d-3)+k}(\L^d_m;\, \Z/p)$ obtained via the pullback $\Phi_\gamma^*$ followed by pushforward $\pi^!$ from the Serre spectral sequence.  
\end{proposition}

It may be interesting to apply the above construction in connection with mod-$p$ {cycles} appearing in work of Turchin \cite{Victor-DyerLashof} and, for the case $d=3$, work of Budney and F.~Cohen \cite{Budney-Cohen}.

    \bibliographystyle{plain}
    \bibliography{refs}

\end{document}